 \definecolor{MyRed}{rgb}{0.9,0,0}
 \definecolor{MyGreen}{rgb}{0,0.9,0}
 \definecolor{MyBlue}{rgb}{0,0,0.9}
\theoremstyle{plain}
\newtheorem{theorem}{Theorem}
\newtheorem*{thm*}{Theorem}
\newtheorem*{theorem*}{Theorem}
\newtheorem*{claim*}{Claim}
\newtheorem{lemma}[theorem]{Lemma}
\newtheorem{conjecture}[theorem]{Conjecture}
\theoremstyle{definition}
\newcommand{\eps}{\varepsilon}
\renewcommand{\leq}{\leqslant}
\renewcommand{\geq}{\geqslant}
\def\Kuhn{K{\"u}hn}
\def\Szemeredi{Szemer\'edi}
\def\COMMENT#1{}
\def\E{\mathbb E}
\def\P{\mathbb P}
\begin{document}

\title{Edge-disjoint Hamilton cycles in graphs}
\author{Demetres Christofides, Daniela \Kuhn\ and Deryk Osthus}
\thanks {D.~Christofides and D.~Osthus were supported by the EPSRC, grant no.~EP/E02162X/1.
D.~K\"uhn was supported by the ERC, grant no.~258345.}
\date{\today}
\subjclass[2000]{05C35,05C45,05C70,05D40.}
\keywords{Hamilton cycles; graph decompositions; regularity lemma; probabilistic methods}
\begin{abstract}
In this paper we give an approximate answer to a question of
Nash-Williams from 1970: we show that for every $\alpha > 0$, every sufficiently large graph
on $n$ vertices with minimum degree at least $(1/2 + \alpha)n$
contains at least $n/8$ edge-disjoint Hamilton cycles.
More generally, we
give an asymptotically best possible answer for the number
of edge-disjoint Hamilton cycles that a graph $G$ with minimum
degree $\delta$ must have. We also prove
an approximate version of another long-standing conjecture of
Nash-Williams: we show that for every $\alpha
> 0$, every (almost) regular and sufficiently large graph on $n$
vertices with minimum degree at least $(1/2 + \alpha)n$ can be
almost decomposed into edge-disjoint Hamilton cycles.
\end{abstract}
\maketitle

\section{Introduction}

Dirac's theorem~\cite{Dirac52} states that every graph on $n \geq 3$
vertices of minimum degree at least $n/2$ contains a Hamilton cycle.
The theorem is best possible since there are graphs of minimum
degree at least $\lfloor (n-1)/2 \rfloor$ which do not contain any
Hamilton cycle.

Nash-Williams~\cite{Nash-Williams71a} proved the surprising result that the conditions of
Dirac's theorem, despite being best possible,
even guarantee the existence of many edge-disjoint Hamilton cycles.

\begin{theorem}[\cite{Nash-Williams71a}] \label{Nash-Williams - 5n/224}
Every graph on $n$ vertices of minimum degree at least $n/2$
contains at least $\lfloor 5n/224 \rfloor$ edge-disjoint Hamilton
cycles.
\end{theorem}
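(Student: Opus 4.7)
The plan is to build the Hamilton cycles greedily: apply Dirac's theorem to find one Hamilton cycle $C_1$, delete $E(C_1)$ from $G$, and iterate. The minimum degree drops by exactly $2$ per step, so after $k$ iterations it is at least $n/2 - 2k$. Dirac's threshold is violated after the very first removal, so the whole proof rests on sustaining the greedy process below the Dirac line.

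My first move is to replace Dirac's theorem by a sharper Hamiltonicity criterion: either Chv\'atal's degree-sequence condition, Ore's theorem (allowing exceptional non-adjacent pairs), or the Chv\'atal--Erd\H{o}s theorem (which trades min degree for connectivity versus independence number). Each of these certifies Hamiltonicity in graphs of min degree slightly below $n/2$, provided the graph avoids the extremal configurations for Dirac, namely $K_{\lceil n/2 \rceil,\lfloor n/2 \rfloor}$ and two cliques of order $\approx n/2$ glued at a single vertex. This naturally motivates a stability-style case split. If $G$ is \emph{far} from these extremal graphs, the extra slack (higher connectivity, or a degree sequence dominating $n/2$) survives through many rounds of edge removal, yielding a linear number of Hamilton cycles by iterated greedy extraction. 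If $G$ is \emph{close} to an extremal graph, I would instead exploit the structure directly: the near-bipartite case admits an almost-Hamilton decomposition by using K\H{o}nig's theorem to decompose the bipartite part into perfect matchings which are then paired into $2$-factors and, after small local adjustments, into Hamilton cycles; the two-cliques case produces Hamilton cycles that cross the join repeatedly by alternating paths between the two dense sides.

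The main obstacle is quantitative. The constant $5/224$ reflects a delicate balance between (i) how many iterations of greedy extraction remain sustainable before the sharpened Hamiltonicity criterion fails, (ii) how far from extremal $G$ must be for the non-extremal argument to apply, and (iii) how many Hamilton cycles the structural analysis yields in the extremal regime. Matching these three parameters smoothly and extracting the exact constant is where I expect nearly all of the difficulty to sit; no single conceptual step seems forbidding, but the constant demands a careful simultaneous optimization across the case analysis.
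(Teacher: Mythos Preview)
This theorem is not proved in the paper at all: it is quoted (with attribution to Nash-Williams~\cite{Nash-Williams71a}) as background and motivation for the authors' own results. There is therefore no proof here to compare your proposal against. The paper's contribution is Theorems~\ref{Minimum Degree n/2}--\ref{Minimum Degree}, which assume minimum degree $(1/2+\alpha)n$ rather than $n/2$ and use the Regularity Lemma, factor theorems, and rotation-extension; none of that machinery is deployed for Theorem~\ref{Nash-Williams - 5n/224}.

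As for the merits of your sketch independently: the greedy-plus-stability plan has a structural gap. After extracting $k$ Hamilton cycles the minimum degree is $n/2-2k$, and for $k=\lfloor 5n/224\rfloor$ this is roughly $0.455n$. At that point no off-the-shelf Hamiltonicity criterion (Ore, Chv\'atal, Chv\'atal--Erd\H{o}s) applies in general, since the remaining graph can easily have independence number above its connectivity. Your stability split is phrased in terms of the \emph{original} graph $G$, but the obstruction arises in $G$ minus a linear number of Hamilton cycles, whose structure is essentially unconstrained by whether $G$ itself was close to $K_{n/2,n/2}$ or to two cliques. In particular, the ``far from extremal'' branch cannot be sustained for $\Theta(n)$ rounds by any degree-sequence or connectivity argument alone. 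Nash-Williams' original proof does not proceed greedily; it builds the cycles in a coordinated way, and the constant $5/224$ arises from that global construction rather than from optimizing a one-at-a-time extraction. If you want to reconstruct the argument, you should consult~\cite{Nash-Williams71a} directly.
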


Nash-Williams~\cite{Nash-Williams70,Nash-Williams71a,Nash-Williams71b}
asked whether the above bound on the number of Hamilton cycles can be
improved. Clearly we cannot expect more than $\lfloor (n+1)/4 \rfloor$ edge-disjoint Hamilton
cycles and
Nash-Williams~\cite{Nash-Williams70} initially conjectured that one
might be able to achieve this. However, soon afterwards, it was
pointed out by Babai (see~\cite{Nash-Williams70}) that this
conjecture is false.  Babai's idea was carried further by
Nash-Williams~\cite{Nash-Williams70} who gave an example of a graph
on $n=4m$ vertices with minimum degree $2m$ having at most $\lfloor
(n+4)/8 \rfloor$ edge-disjoint Hamilton cycles. Here is a
similar example having at most $\lfloor (n+2)/8 \rfloor$
edge-disjoint Hamilton cycles: Let $A$ be an empty graph on $2m$
vertices, $B$ a graph consisting of $m+1$ disjoint edges and let $G$ be the
graph obtained from the disjoint union of $A$ and $B$ by adding all
possible edges between $A$ and $B$. So $G$ is a graph on $4m+2$
vertices with minimum degree $2m+1$.
Observe that any Hamilton cycle of $G$ must use at least 2 edges
from $B$ and thus $G$ has at most $\lfloor (m+1)/2 \rfloor$
edge-disjoint Hamilton cycles.
We will prove that this example is asymptotically best possible.

\begin{theorem}\label{Minimum Degree n/2}
For every $\alpha > 0$ there is an integer $n_0$ so that every graph
on $n \geq n_0$ vertices of minimum degree at least $(1/2 +
\alpha)n$ contains at least $n/8$ edge-disjoint Hamilton cycles.
\end{theorem}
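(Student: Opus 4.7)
The plan is to split into two cases according to whether $G$ is close to the bipartite extremal construction described above. Fix constants $0 < \eta \ll \alpha$, and call $G$ \emph{$\eta$-bipartite} if there is a partition $V(G) = A \cup B$ with $|A|, |B| = (1 \pm \eta)n/2$ and $e(A) + e(B) \leq \eta n^2$.

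If $G$ is not $\eta$-bipartite, then $G$ retains enough expansion that one expects an approximate Hamilton decomposition. The idea is to apply \Szemeredi's regularity lemma and pass to a reduced graph $R$ inheriting minimum degree $(1/2+\alpha/2)|R|$ together with a robust non-bipartiteness/expansion property, then iteratively pick a Hamilton cycle in $R$ (via Dirac applied to the current reduced graph), lift it to a Hamilton cycle in $G$ using the super-regularity between clusters and an absorbing-path argument, and delete its edges. Since each cycle uses only $n$ edges while $e(G) \geq (1/4+\alpha/2)n^2$, this should produce at least $(1-o(1))e(G)/n \geq n/4$ edge-disjoint Hamilton cycles, safely exceeding $n/8$.

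If $G$ is $\eta$-bipartite, fix such a partition with $|A| \leq |B|$ and set $t = (|B|-|A|)/2 \leq \eta n$. A side-alternation argument on any Hamilton cycle shows that at least $2t$ of its edges lie inside $B$ (the excess $B$-vertices must be absorbed by $B$-internal adjacencies, modulo any $A$-internal ones used). The minimum-degree hypothesis gives $e(B) \geq (|B|/2)\bigl((1/2+\alpha)n - |A|\bigr) = \Omega(\alpha n^2)$, with a symmetric bound for $e(A)$. The idea is to construct the Hamilton cycles greedily, one at a time: at each stage, select the required internal $A$- and $B$-edges (whose number is dictated by $t$) and complete them to a spanning cycle using cross-edges via a Hall-type matching / blow-up argument in the almost-complete bipartite cross-graph. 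Minimizing $e(B)/(2t)$ over $0 \leq t \leq \eta n$ yields the bound $(1/8 + \Omega(\sqrt{\alpha}))n$, leaving a safety margin for the error terms introduced by the iterative construction.

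The hardest step is expected to be the extremal case when $t$ is of order $\sqrt{\alpha}\,n$: here the combinatorial bound is near-tight, so every Hamilton cycle must be extracted using near-minimal numbers of internal edges, while the cross-bipartite graph must remain dense and structured enough to complete all $\sim n/8$ cycles. Handling this will likely require reserving a small absorbing reservoir of cross-edges at the outset, using it to close each cycle and to synchronise the internal-edge pairings across cycles; carrying this out within a tight edge budget is the main technical obstacle.
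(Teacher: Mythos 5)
Your plan hinges on a case split between ``$\eta$-bipartite'' and ``not $\eta$-bipartite'' with $\eta \ll \alpha$, and it is precisely this threshold that breaks the argument. The extremal configuration for $\delta \approx (1/2+\alpha)n$ is the graph $G = A \cup B$ with $A$ independent, $|A| = (1/2 - c)n$, $B$ a $(c+\alpha)n$-regular graph on $(1/2+c)n$ vertices, and all $A$--$B$ edges present. The side-alternation bound forces $2cn$ internal $B$-edges per Hamilton cycle, and $e(B) = \tfrac12 (1/2+c)(c+\alpha)n^2$, giving at most $\bigl(\tfrac18 + \tfrac{c}{4} + \tfrac{\alpha}{8c} + \tfrac{\alpha}{4}\bigr)n$ edge-disjoint Hamilton cycles; minimising over $c$ gives $c = \sqrt{\alpha/2}$, at which point the bound is $\approx n/8 + \sqrt{2\alpha}\,n/4$, matching the paper's Theorem~\ref{Minimum Degree}(i). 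But with $c = \sqrt{\alpha/2}$ the imbalance $|B| - |A| = 2cn$ is of order $\sqrt{\alpha}\,n \gg \eta n$ (since $\eta \ll \alpha \ll \sqrt{\alpha}$), so this graph is \emph{not} $\eta$-bipartite under your definition. Your proposal therefore routes the genuinely extremal examples into the ``non-bipartite'' branch and there asserts $(1-o(1))e(G)/n \geq n/4$ cycles, which is simply false for these graphs — they admit barely more than $n/8$. Put differently, ``far from every \emph{balanced} bipartition'' does not exclude the near-bipartite-but-$\sqrt{\alpha}$-unbalanced structure that is the real bottleneck. To salvage the dichotomy you would at minimum have to raise $\eta$ to order $\sqrt{\alpha}$ (or larger), at which point the ``bipartite'' branch absorbs the hard case that you already flag at the end of your plan; but then the ``non-bipartite'' branch still cannot reach $n/4$ and needs its own careful near-$n/8$ analysis, so nothing has been gained.

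Independently of the threshold, the non-bipartite branch as sketched (iterate Dirac's theorem on the reduced graph, lift via an absorbing path, repeat $\sim n/4$ times) is an optimistic sketch rather than an argument: iterating this naively does not preserve the minimum-degree condition in the reduced graph through $\Theta(n)$ rounds, and it is exactly because of this that one needs something like the paper's approach. The paper does not split into cases at all. It builds a \emph{multigraph} reduced graph $R$ (multiplicities $\lfloor d(V_i,V_j)/\beta\rfloor$), applies Tutte's $f$-factor theorem (Theorem~\ref{Factor Theorem}) to extract an almost-maximum regular submultigraph of $R$ — this single step simultaneously handles the balanced-bipartite, unbalanced-bipartite, and generic cases, and it is where the $\sqrt{n(2\delta-n)}$ term in Theorem~\ref{Minimum Degree} arises — then decomposes into matchings, extracts near-regular spanning subgraphs $S_i$, decomposes those into $2$-factors whose cycle count is controlled via a permanent estimate from Frieze--Krivelevich, and finishes with rotation--extension. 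Tutte's theorem is what makes the extremal trade-off automatic; your plan contains no analogue of it, which is the core missing idea.
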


Nash-Williams~\cite{Nash-Williams70,Nash-Williams71b} pointed out that the
construction described above depends heavily on the graph being
non-regular. He thus conjectured~\cite{Nash-Williams71b} the
following, which if true is clearly best possible.

\begin{conjecture}[\cite{Nash-Williams71b}]\label{Nash-Williams - Decomposition}
Let $G$ be a $d$-regular graph on at most $2d$ vertices. Then $G$
contains $\lfloor d/2 \rfloor$ edge-disjoint Hamilton cycles.
\end{conjecture}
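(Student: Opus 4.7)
The plan is to prove the conjecture by combining a structural dichotomy with a strong decomposition theorem for regular robust expanders. I would first separate a \emph{non-extremal} case, in which $G$ behaves like a regular robust outexpander, from an \emph{extremal} case, in which $G$ is close (in edit distance) to one of a small list of structured configurations---essentially the disjoint union of two near-cliques on $n/2$ vertices each, or a near-complete balanced bipartite graph. The dichotomy is governed by constants $0 < \eps \ll \nu \ll \tau \ll 1$: the aim is that every $d$-regular graph on $n \leq 2d$ vertices falls into exactly one of these two regimes.

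In the non-extremal case, the goal is to show that every $d$-regular robust $(\nu,\tau)$-outexpander on $n$ vertices with $n \leq 2d$ admits a decomposition into $\lfloor d/2 \rfloor$ Hamilton cycles, together with a perfect matching when $d$ is odd. My approach would be a two-step refinement of the approximate decomposition method. Before starting, I would set aside a sparse \emph{robustly decomposable} subgraph $R \subseteq G$, whose key property is that $R \cup L$ admits a Hamilton decomposition for \emph{every} sparse (nearly) regular $L$ that might appear as a leftover. I would then apply the regularity lemma to $G \setminus R$ to extract approximately $(1-\eps)d/2$ edge-disjoint Hamilton cycles, an upgrade of the approximate result of this paper to the regular-expander setting. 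The $\eps d$-regular remainder, together with $R$, is then decomposed exactly using the universal absorption property of $R$.

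In the extremal case the method is to exploit the explicit structure. If $G$ is close to the disjoint union of two $K_{n/2}$'s, then $d$-regularity forces the crossing edges to form a $(d-(n/2-1))$-regular bipartite graph, and Hamilton cycles can be built by alternating internal paths inside each near-clique with crossing edges; a bipartite matching argument, combined with the fact that each near-clique is itself Hamilton-decomposable via Walecki's classical construction, delivers the required count. If $G$ is close to a complete bipartite graph, a dual construction applies. Parity is handled by noting that when $d$ is odd, $n$ is even, and the leftover after $(d-1)/2$ Hamilton cycles must be a $1$-regular subgraph, i.e.\ a perfect matching, which is automatically admissible.

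The main obstacle will be the construction of the robustly decomposable reserve $R$ in the non-extremal step, which must be simultaneously sparse enough not to disturb the approximate decomposition and flexible enough to absorb any sparse near-regular leftover. This is precisely the gap between the approximate theorem of this paper and Nash-Williams's conjecture: the approximate argument tolerates an $\alpha d$ defect, whereas the conjecture permits a defect of zero (or a single perfect matching in the parity case). A secondary obstacle is calibrating the boundary between the extremal and non-extremal regimes so that graphs which are only \emph{almost} extremal still inherit a robust expansion property strong enough to drive the decomposition argument.
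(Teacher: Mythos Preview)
The statement you are addressing is a \emph{conjecture} in the paper, not a theorem: the paper does not prove it. What the paper establishes is only the approximate version (Theorem~\ref{Regular}), namely that a $d$-regular graph on $n$ vertices with $d \geq (1/2+\alpha)n$ contains at least $(d-\alpha n)/2$ edge-disjoint Hamilton cycles, leaving an $\alpha n$ defect. So there is no ``paper's own proof'' to compare your proposal against.

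Your proposal is therefore not a proof but a research outline for closing the gap between the approximate result and the full conjecture. You have correctly identified that the approximate decomposition machinery of this paper (regularity lemma, matching decomposition of the reduced multigraph, extraction of regular spanning subgraphs, rotation--extension) cannot by itself reach $\lfloor d/2 \rfloor$: the error term $\alpha n$ is intrinsic to every step. You have also correctly located the missing ingredient---a \emph{robustly decomposable} reserve subgraph that can absorb an arbitrary sparse near-regular leftover---and noted that this is precisely the obstacle. That is accurate: this absorption idea, together with the robust-expander/extremal dichotomy you describe, is the strategy later developed (in work subsequent to this paper) to resolve the conjecture for large $n$. But none of that machinery is present here, and your proposal does not supply it; it only names it. In particular, the construction of such a reserve $R$ and the proof that $R \cup L$ is Hamilton-decomposable for \emph{every} admissible leftover $L$ is a substantial piece of work in its own right, not something that can be asserted. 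As written, your proposal is a plausible plan of attack, not a proof.
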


The conjecture was also raised independently by
Jackson~\cite{Jackson79}. For complete graphs, its truth follows
from a construction of Walecki (see e.g.~\cite{abs,lucas}). The best result towards this conjecture is the
following result of Jackson~\cite{Jackson79}.

\begin{theorem}[\cite{Jackson79}]
Let $G$ be a $d$-regular graph on $14 \leq n \leq 2d+1$ vertices.
Then $G$ contains $\lfloor (3d-n+1)/6 \rfloor$ edge-disjoint
Hamilton cycles.
\end{theorem}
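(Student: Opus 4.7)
The plan is a greedy inductive argument: remove edge-disjoint Hamilton cycles from $G$ one by one, controlling the structure of the remainder. Since $G$ is $d$-regular, after $k$ removals the leftover graph $G_k$ is $r_k$-regular with $r_k = d-2k$. The constraint $k < \lfloor (3d-n+1)/6 \rfloor$ translates into $r_k > (n-1)/3$, i.e.\ $n \le 3 r_k$. So the task reduces to proving a Hamiltonicity statement of the form: every $r$-regular graph on $n\le 3r$ vertices, with a mild additional property inherited from the ambient process, contains a Hamilton cycle.

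The natural additional property is 2-connectedness, and the first step would be to establish that every 2-connected $r$-regular graph on $n \le 3r$ vertices is Hamiltonian. I would prove this by a P\'osa-style rotation--extension argument: take a longest cycle $C$ and, assuming for contradiction that $C$ misses a vertex, use the 2-connectedness to obtain two internally disjoint paths from off-cycle vertices back to $C$; the bound $r \ge n/3$ then forces enough rotation endpoints to land in favourable positions on $C$ to produce either a longer cycle or a direct contradiction. The hypothesis $n \ge 14$ would enter here to rule out small sporadic exceptions to the counting.

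Granting the Hamiltonicity statement, the remaining task is to ensure that each $G_k$ is 2-connected. The starting graph $G_0 = G$ is $d$-regular with $d\ge (n-1)/2$, so it is very highly connected (in fact essentially $d$-connected), and the base case is trivial. The main obstacle is the inductive step: a carelessly chosen Hamilton cycle $C_{k+1}$ in $G_k$ could, upon removal, create cut vertices or bridges. My plan is to exploit the slack given by the \emph{strict} inequality $r_k > (n-1)/3$: because this bound is not tight, $G_k$ contains many Hamilton cycles, and a local rerouting (swap one or two edges of a candidate cycle across a potential cut) should produce a Hamilton cycle whose deletion preserves 2-connectedness. Formalising this rerouting -- showing that whenever $G_k - E(C)$ has a cut vertex $v$, one can alter $C$ near $v$ using edges of $G_k$ not on $C$ -- is the most delicate part of the argument and, I expect, where most of the technical work lies. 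Once carried out, the induction closes and produces $\lfloor (3d-n+1)/6 \rfloor$ edge-disjoint Hamilton cycles.
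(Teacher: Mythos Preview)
This theorem is not proved in the paper: it is quoted as a known result of Jackson~\cite{Jackson79} and used only as context for Conjecture~\ref{Nash-Williams - Decomposition}. There is therefore no proof in the paper to compare your proposal against.

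On the proposal itself: the overall greedy strategy is indeed the shape of Jackson's argument, and your arithmetic translating $k<\lfloor(3d-n+1)/6\rfloor$ into $n\le 3r_k$ is correct. However, both of the two main ingredients you sketch are substantially harder than you suggest. The Hamiltonicity lemma you need --- that every $2$-connected $r$-regular graph on at most $3r$ vertices is Hamiltonian --- is itself a deep theorem of Jackson (1980), and its proof is considerably more intricate than a P\'osa rotation; the bound $3r$ is sharp (the Petersen graph shows that $3r+1$ fails), so the ``mild additional property'' cannot simply be absorbed into slack in the counting. More seriously, your plan for preserving $2$-connectedness is only a heuristic: the assertion that a candidate Hamilton cycle can always be locally rerouted near a prospective cut vertex so that its removal leaves a $2$-connected graph is exactly the crux, and ``there are many Hamilton cycles'' does not by itself justify it. Jackson's original argument handles this point with care, and you should expect it to require real work rather than a one-line swap.
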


In this paper we prove an approximate version of Conjecture~\ref{Nash-Williams -
Decomposition}.

\begin{theorem}\label{Regular}
For every $\alpha > 0$ there is an integer $n_0$ so that every
$d$-regular graph on $n \geq n_0$ vertices with $d \geq (1/2 +
\alpha)n$ contains at least $(d - \alpha n)/2$ edge-disjoint
Hamilton cycles.
\end{theorem}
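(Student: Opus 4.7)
The plan is to combine Szemer\'edi's regularity lemma with the blow-up lemma, reducing the problem to finding an approximate Hamilton decomposition of a dense, almost-complete \emph{reduced graph}. Since $G$ is $d$-regular on $n$ vertices, a Hamilton decomposition would consist of $d/2$ cycles covering all $dn/2$ edges, and the target $(d-\alpha n)/2$ is within $o(d)$ of this bound, so we essentially want to cover almost all edges. Choose $\eps\ll\eta\ll\alpha$, apply the regularity lemma to obtain a partition $V_0, V_1, \ldots, V_k$ with $|V_0|\leq\eps n$ and $|V_i|=m$, and form the reduced graph $R$ on $\{V_1,\ldots,V_k\}$ by joining $V_iV_j$ whenever $(V_i,V_j)$ is $\eps$-regular of density at least $d/n-\eta$. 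Since $G$ is $d$-regular, the average density of pairs incident to each $V_i$ is $\approx d/n$, so almost all such pairs survive: $R$ is an almost-complete graph with $\delta(R)\geq(1/2+\alpha/2)k$. Before continuing, reserve a small random subset $W\subseteq V(G)\setminus V_0$ which will later be used to absorb the exceptional vertices of $V_0$.

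Next, find a collection $\mathcal{H}$ of $(1-\eta)(k-1)/2$ edge-disjoint Hamilton cycles in $R$: since $R$ differs from $K_k$ by only $o(k)$ missing edges at each vertex, one can take Walecki's classical Hamilton decomposition of $K_k$ and repair or discard the (few) cycles using a missing edge. For each $C\in\mathcal{H}$, the sequence of $\eps$-regular pairs along $C$ forms a ``tube'' in $G$ of density close to $d/n$; after discarding a few atypical vertices to make each pair super-regular, apply the blow-up lemma iteratively to extract $(1-\eta)(d/n)m=(1-\eta)d/k$ edge-disjoint Hamilton cycles of $G-V_0-W$ that cycle through the tube. Since the cycles in $\mathcal{H}$ are pairwise edge-disjoint in $R$, the tubes corresponding to distinct $C$'s use disjoint bipartite pairs in $G$, so all lifted Hamilton cycles are edge-disjoint, and their total count is at least $(1-O(\eta))\cdot(k-1)/2\cdot d/k\geq(d-\alpha n)/2$. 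Finally, reroute each lifted cycle through $W$ to incorporate the vertices of $V_0$, producing genuine Hamilton cycles of $G$.

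The hardest step is the approximate Hamilton decomposition of $R$ together with the parallel lifting: this is essentially a smaller-scale instance of Conjecture~\ref{Nash-Williams - Decomposition} itself, though it is tractable because $R$ is only $o(k)$-far from $K_k$ at each vertex, permitting an explicit modification of Walecki's construction. The lifting step requires careful accounting to ensure that the total Hamilton count matches the target: the density along each tube must remain $\approx d/n$, and this is matched by the $(1-o(1))(k-1)/2$ cycles in $R$, giving $(1-o(1))(k-1)/2\cdot d/k\approx d/2$. Absorbing $V_0$ via the reservoir $W$ is a now-standard argument, straightforward once $W$ is chosen so that every $v\in V_0$ has $\Omega(|W|)$ neighbours in $W$ and so that $W$ can be woven into each lifted cycle without disrupting edge-disjointness.
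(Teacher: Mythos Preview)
Your central claim --- that the reduced graph $R$ is ``almost-complete'' --- is false, and the whole argument collapses with it. Knowing that the average density of pairs incident to each $V_i$ is $\approx d/n$ tells you nothing about how many pairs exceed the threshold $d/n-\eta$: the densities need not concentrate. For a concrete obstruction, take $G$ to be two cliques of size $n/2$ joined by a sparse (say $2\alpha n$-regular) bipartite graph; this $G$ is $d$-regular with $d\approx(1/2+\alpha)n$, yet every cross-pair has density $\approx 2\alpha < d/n-\eta$, so with your definition $R$ is the disjoint union of two cliques of size $k/2$. There is no Walecki decomposition to repair, and in general $R$ only has minimum degree $\approx(1/2+\alpha)k$, so ``a few missing edges'' is really half of $K_k$. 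A second, related error: even along a Hamilton cycle $C$ of $R$, the number of edge-disjoint Hamilton cycles you can lift is governed by the \emph{minimum} density along $C$, not by $d/n$; your count $(1-\eta)(d/n)m$ per tube is unjustified.

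This is precisely why the paper does \emph{not} work with a simple reduced graph. Instead it builds a reduced \emph{multigraph} in which the multiplicity of $V_iV_j$ is $\lfloor d(V_i,V_j)/\beta\rfloor$, so that high-density pairs contribute proportionally more edges. This multigraph is genuinely almost regular of degree $\approx (d/n)k/\beta$, and Tutte's $f$-factor theorem then yields a large regular spanning submultigraph, which is decomposed into perfect matchings (not Hamilton cycles). Each matching is lifted to an almost-regular subgraph of $G$ of degree $\approx\beta m$, made exactly regular, decomposed into $2$-factors with $o(n)$ cycles (via a permanent estimate, not the blow-up lemma), and finally each $2$-factor is converted to a Hamilton cycle by rotation--extension using reserved edges. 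Your outline misses both the multigraph idea and the matching/$2$-factor route; neither the blow-up lemma nor Walecki plays any role.
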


In fact, we will prove the following more general result which states
that Theorem~\ref{Regular} is true for almost regular graphs as
well. Note that the construction showing that
one cannot achieve more than $\lfloor (n+2)/8\rfloor$ edge-disjoint Hamilton cycles under
the conditions of Dirac's theorem is almost regular.
However in the following result we also demand that the minimum
degree is a little larger than $n/2$.

\begin{theorem}\label{Almost regular}
There exists $\alpha_0>0$ so that for every $0<\alpha \le \alpha_0$
there is an integer $n_0$ for which every graph
on $n \geq n_0$ vertices with minimum degree $\delta \geq (1/2 +
\alpha)n$ and maximum degree $\Delta \leq \delta + \alpha^2 n/5$
contains at least $(\delta - \alpha n)/2$ edge-disjoint Hamilton
cycles.
\end{theorem}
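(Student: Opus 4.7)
My plan follows a regularity + blow-up strategy with a random reservoir. I would begin by randomly splitting $E(G)$ into a small reservoir $F$ (keeping each edge independently with probability about $\alpha^{3}$) and the bulk $G':=G\setminus F$. By a Chernoff bound, whp both $F$ and $G'$ inherit approximate regularity from $G$, and $G'$ still satisfies $\delta(G')\geq(1/2+\alpha')n$ for some $\alpha'$ only slightly smaller than $\alpha$. Apply Szemer\'edi's Regularity Lemma to $G'$ to obtain an $\varepsilon$-regular partition $V_0\cup V_1\cup\dots\cup V_k$ with $|V_0|\leq\varepsilon n$ and $|V_i|=m$ for $i\geq 1$, and let $R$ be the reduced graph on $\{V_1,\dots,V_k\}$ whose edges correspond to $\varepsilon$-regular pairs of density at least some threshold $d_0$. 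A short counting argument (using $\Delta(G)-\delta(G)\leq \alpha^2 n/5$) gives $\delta(R)\geq(1/2+\alpha/3)k$ and $\Delta(R)-\delta(R)=o(k)$, so $R$ is itself dense and almost-regular.

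The core of the argument is to find an approximate Hamilton decomposition of $R$, i.e.\ a family of $t:=(1-o(1))\delta(R)/2$ edge-disjoint Hamilton cycles $C^{(1)},\dots,C^{(t)}$ of $R$. For each $C^{(s)}=V_{i_1}V_{i_2}\cdots V_{i_k}V_{i_1}$, the associated cycle blow-up in $G'$ is a chain of $k$ super-regular bipartite pairs; by iteratively applying the Blow-up Lemma of Koml\'os--S\'ark\"ozy--Szemer\'edi (or by a Frieze--Krivelevich-type near-decomposition of super-regular bipartite graphs into perfect matchings), one extracts $(1-\varepsilon)m$ edge-disjoint Hamilton cycles on $V(G)\setminus V_0$ from each such cycle blow-up, reserving a small fraction of each pair to keep distinct $C^{(s)}$'s edge-disjoint. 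Finally, I use $F$ to absorb the exceptional vertices: each $v\in V_0$ has $\Omega(\alpha n)$ neighbours in $F$, so for each of the Hamilton cycles produced so far we can allocate two private $F$-edges per $v\in V_0$ and use them to splice $v$ into the cycle via a short detour. Summing, the total number of edge-disjoint Hamilton cycles obtained in $G$ is at least $(1-o(1))tm\geq(\delta-\alpha n)/2$.

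The hard part is producing the approximate Hamilton decomposition of the reduced graph $R$. One cannot simply recurse on $R$, since $R$ satisfies the same kind of hypotheses as $G$ on $k\ll n$ vertices. My plan is to exploit that $R$ is almost $d_R$-regular with $d_R>k/2+\alpha k/3$: first, set aside a small edge-reservoir inside $R$; next, produce an approximate $1$-factorization of the remainder (available once a nearly regular graph has density well above $1/2$, by a partial-$1$-factor/Vizing-type argument); pair up the $1$-factors into $2$-factors; and then upgrade almost all of the $2$-factors into Hamilton cycles using bounded rotation--extension swaps fuelled by the $R$-reservoir, much as in the classical Dirac-type Hamiltonicity proofs. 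I expect this step to be the main technical hurdle, and it should be what dictates the size of $\alpha_0$ and the $\alpha n$ slack in the bound $(\delta-\alpha n)/2$.
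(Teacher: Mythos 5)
Your approach diverges substantially from the paper's, and it has at least two genuine gaps.

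First, the density accounting does not close. You build the standard reduced graph $R$ (an edge for every $\eps$-regular pair of density at least some $d_0$), extract $t\approx\delta(R)/2$ edge-disjoint Hamilton cycles of $R$, and claim each cycle blow-up yields $(1-\eps)m$ edge-disjoint Hamilton cycles of $G'$. But a single edge $V_iV_j$ of $R$ carries only about $d(V_i,V_j)m^2$ edges of $G'$, so a Hamilton cycle of $R$ whose pairs have bottleneck density $d_{\min}$ gives at most roughly $d_{\min}\,m$ edge-disjoint Hamilton cycles of the blow-up, not $(1-\eps)m$. If $G$ has overall density $\lambda$ then most pairs have density about $\lambda$, and the total you can produce is of order $\lambda^2 n/2$, which falls short of $\delta/2\approx\lambda n/2$ by a factor of $\lambda$. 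This is precisely why the paper declines the standard reduced graph and instead works with a \emph{multigraph} in which the pair $V_iV_j$ carries $\lfloor d(V_i,V_j)/\beta\rfloor$ parallel edges, each ``worth'' a density-$\beta$ slab of the regular pair; after regularising this multigraph with Tutte's $f$-factor theorem, decomposing it into $2$-factors and then (after a cluster split) into perfect matchings, every matching edge is paired with the right quota of $G$-edges. Relatedly, the approximate Hamilton decomposition of $R$ that you take as the ``hard part'' is the same unsolved problem in miniature: your near-$1$-factorisation step already needs a Tutte-type regularisation, and your rotation--extension step needs a Frieze--Krivelevich cycle-count bound, so this is not a shortcut over the paper's route but a recursion into it.

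Second, the absorption of the exceptional set is underpowered. A vertex $v\in V_0$ is exceptional throughout, and to lie on roughly $\delta/2$ edge-disjoint Hamilton cycles it must use nearly all of its $\delta$ incident edges. Your reservoir $F$ is sampled at rate $\alpha^3$, so $v$ has only about $\alpha^3\delta$ usable $F$-edges, far fewer than the $\approx\delta$ it needs when you try to allocate two private edges per Hamilton cycle. The paper instead feeds to each vertex $x$, playing the exceptional role in $\ell_x$ of the graphs $G_i$, roughly $\ell_x\beta m$ edges drawn from \emph{all} edges at $x$ not already committed elsewhere, and the inequality $\delta\ge r\beta m+\alpha n/5$ is exactly what makes this budget balance. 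A small-density reservoir cannot perform this absorption, and your sketch does not indicate how to fix it.
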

Frieze and Krivelevich~\cite{Frieze&Krivelevich05} proved that the above results hold
if one also knows that the graph is quasi-random (in which case one can drop the
condition on the minimum degree).
So in particular, it follows that a binomial random graph $G_{n,p}$ with constant edge probability $p$
can `almost' be decomposed into Hamilton cycles with high probability.
For such $p$, it is still an open question whether one can improve this to show
that with high probability the number of edge-disjoint Hamilton cycles is exactly half the minimum degree
-- see e.g.~\cite{Frieze&Krivelevich05} for a further discussion.
Our proof makes use of the ideas in~\cite{Frieze&Krivelevich05}.

Finally, we answer the question of what happens if we have a better bound on the minimum
degree than in Theorem~\ref{Minimum Degree n/2}.
The following result approximately describes how the number of edge-disjoint Hamilton cycles
guaranteed in $G$ gradually approaches $\delta(G)/2$ as $\delta(G)$ approaches $n-1$.
\begin{theorem}\label{Minimum Degree}
$ $
\begin{itemize}
\item[(i)] For all positive integers $\delta,n$ with $n/2 < \delta <
n$, there is a graph $G$ on $n$ vertices with minimum degree
$\delta$ such that $G$ contains at most
\[
\frac{\delta + 2 + \sqrt{n(2 \delta - n)}}{4}
\]
edge-disjoint Hamilton cycles.
\item[(ii)] For every $\alpha > 0$, there is a positive integer
$n_0$ so that every graph on $n \geq n_0$ vertices of minimum degree
$\delta \geq (1/2 + \alpha)n$ contains at least
\begin{equation}\label{eqbound}
\frac{\delta - \alpha n +  \sqrt{n(2 \delta - n)}}{4}
\end{equation}
edge-disjoint Hamilton cycles.
\end{itemize}
\end{theorem}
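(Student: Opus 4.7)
My plan for (i) is to generalise the example from the introduction. Set $s = \sqrt{n(2\delta-n)}$ and pick an integer $a$ as close as possible to $(n-s)/2$; write $b = n - a$. Take $V(G) = A \sqcup B$ with $|A| = a$ and $|B| = b$, make $A$ an independent set, choose $G[B]$ to be any graph with minimum degree $\delta - a$ and $\lceil b(\delta-a)/2\rceil$ edges (so as close to regular as parity permits), and join every pair in $A \times B$. Vertices of $A$ then have degree $b \geq \delta$ and vertices of $B$ have degree $a + (\delta - a) = \delta$, so $\delta(G) = \delta$. Since $A$ is independent, both cycle-neighbours of every $v \in A$ lie in $B$, so each Hamilton cycle uses exactly $2a$ cross edges and hence $n - 2a = b - a$ edges inside $B$. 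The number of edge-disjoint Hamilton cycles is therefore at most $e(G[B])/(b-a)$; a short calculation shows that this ratio is minimised at $a = (n-s)/2$, where it equals $(n+s)^2/(8n) = (\delta+s)/4$, and rounding plus parity contribute at most a further $1/2$, giving the stated bound $(\delta+2+\sqrt{n(2\delta-n)})/4$.

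\textbf{Part (ii): lower bound.} Here I would apply Theorem~\ref{Almost regular} as a black box after regularising $G$. First, I construct a spanning subgraph $G_0 \subseteq G$ satisfying $\delta(G_0) \geq \delta - o(n)$ and $\Delta(G_0) \leq \delta(G_0) + \alpha^2 n/5$. Such a $G_0$ can be extracted by iteratively removing edges joining two vertices of excess degree, with any obstructions resolved via an $f$-factor or flow argument; the hypothesis $\delta \geq (1/2+\alpha)n$ leaves plenty of slack. Applying Theorem~\ref{Almost regular} to $G_0$ then yields at least $(\delta - \alpha n - o(n))/2$ edge-disjoint Hamilton cycles of $G_0 \subseteq G$. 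A short algebraic check (squaring and simplifying) shows that $(\delta-\alpha n)/2 \geq (\delta-\alpha n + s)/4$ precisely when $\delta \leq n(1 + \alpha - \sqrt{2\alpha})$, so in this regime the argument is already complete.

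\textbf{Main obstacle.} The remaining regime, $\delta > n(1+\alpha-\sqrt{2\alpha})$, is the main hurdle: here $G$ is close to complete (its complement has maximum degree $O(\sqrt{\alpha}\,n)$) and Theorem~\ref{Almost regular} falls short of the target $(\delta-\alpha n+s)/4$ by up to $\alpha n/4$ cycles. To bridge this I would exploit the fact that every pair of vertices in $G$ shares at least $2\delta - n = \Omega(n)$ common neighbours, performing absorbing-style rotations or short alternating-path surgeries on the leftover edges of $G \setminus G_0$ to construct the missing cycles one at a time. Making the regularisation step and this near-complete surgery fit together smoothly across the crossover $\delta \approx n(1+\alpha-\sqrt{2\alpha})$ is the most delicate point, because the construction in~(i) uses $|A| \approx (n-s)/2$, which becomes very small in the near-complete regime and signals that the extremal behaviour there is of a genuinely different character from the almost-regular decomposition provided by Theorem~\ref{Almost regular}.
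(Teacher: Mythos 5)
Your part (i) construction and calculation coincide with the paper's: an independent set $A$ completely joined to a near-$(\delta - |A|)$-regular graph $B$ on $n - |A|$ vertices, with $|B|$ chosen close to $\bigl(n+\sqrt{n(2\delta-n)}\bigr)/2$; counting the $|B|-|A|$ edges that each Hamilton cycle must use inside $B$ gives exactly the bound $\frac{|B|\,(\delta+|B|-n)}{2(2|B|-n)}$, which is the paper's bound on $r/2$ for an $r$-factor.

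For part (ii), however, there is a genuine gap, and it occurs already at the first step, not in the ``remaining regime'' you flag. You propose to extract a spanning subgraph $G_0 \subseteq G$ with $\delta(G_0) \geq \delta - o(n)$ and $\Delta(G_0) - \delta(G_0) \leq \alpha^2 n/5$. No such $G_0$ need exist, and the extremal graph you built for part (i) is a counterexample: every edge at an $A$-vertex ends in $B$, and every $B$-vertex has degree only $\delta$, so reducing any $A$-vertex's degree drops some $B$-vertex below $\delta$. Indeed the whole point of part (i) (equivalently, of the bound in Theorem~\ref{Factor Theorem}(i) with $\ell = 1$) is that the largest regular spanning subgraph of that graph has degree roughly $\bigl(\delta + \sqrt{n(2\delta-n)}\bigr)/2$, which is strictly less than $\delta$ whenever $\delta < n$. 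So the claim that $\delta \geq (1/2+\alpha)n$ ``leaves plenty of slack'' for the regularisation is false. Nor does the fallback of taking the maximal $r$-factor with $r \approx \bigl(\delta+\sqrt{n(2\delta-n)}\bigr)/2$ and applying Theorem~\ref{Almost regular} to it work: for $\delta$ close to $(1/2+\alpha)n$ this $r$ is close to $n/4$, which lies well below $n/2$, so Theorem~\ref{Almost regular} cannot even be invoked. The problematic regime for your reduction is therefore $\delta$ close to $n/2$, not $\delta$ close to $n$.

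This is precisely why the paper does not deduce Theorem~\ref{Minimum Degree}(ii) from Theorem~\ref{Almost regular}, but proves both simultaneously. The idea you are missing is that the Factor Theorem should be applied to the reduced \emph{multigraph} $R$, not to $G$. Since $R$ allows up to $1/\beta$ parallel edges, its minimum degree is of order $k/\beta \gg k$, and Theorem~\ref{Factor Theorem}(i) then yields an $r$-regular submultigraph $R'$ of $R$ with $r$ close to $\bigl(\delta + \sqrt{n(2\delta-n)}\bigr)k/(2\beta n)$. Decomposing $R'$ into $r$ perfect matchings and building roughly $\beta m/2$ Hamilton cycles per matching gives the $\sqrt{n(2\delta-n)}$ term in the bound. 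The square root enters through the number of matchings obtained in the multigraph, not through any near-regular spanning subgraph of $G$ --- which, by your own part (i), cannot have degree anywhere near $\delta$ in general.
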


Observe that Theorem~\ref{Minimum Degree n/2} is an immediate
consequence of Theorem~\ref{Minimum Degree}(ii). In
Section~\ref{sec:bestposs} we will give a simple construction which
proves Theorem~\ref{Minimum Degree}(i). This construction also
yields an analogue of Theorem~\ref{Minimum Degree} for $r$-factors,
where $r$ is even: Clearly, Theorem~\ref{Minimum Degree}(ii) implies
the existence of an $r$-factor for any even $r$ which is at most
twice the bound in~\eqref{eqbound}. The construction in
Section~\ref{sec:bestposs} shows that this is essentially best
possible. The question of which conditions on a graph guarantee an
$r$-factor has a huge literature, see the survey by Plummer for a
recent overview~\cite{plummer}.

It turns out that the proofs of Theorems~\ref{Almost
regular} and~\ref{Minimum Degree}(ii) are very similar and
we will thus prove these results simultaneously. In Section~3 we give an overview
of the proof. In Section~4 we introduce some notation and also some
tools that we will need in the proofs of Theorems~\ref{Almost
regular} and~\ref{Minimum Degree}(ii). We prove these
theorems in Section~5.

Another long-standing conjecture in the area is due to Kelly (see e.g.~\cite{moon}).
It states that any regular tournament can be decomposed into edge-disjoint Hamilton cycles.
Very recently, an approximate version of this conjecture was proved in~\cite{Kuhn&Osthus&Treglown}.
The basic proof strategy is common to both papers. So we hope that the
proof techniques will also be useful for further decomposition problems.

%%%%%%%%%%%%%%%%%%%%%%%%%%%%%%%%%%%%%%%%%%%%%%%%%%%%%%%%%%%%%%%%%%%%%%%%%%%%%%%%%%%%%%%%%%%%%%%%%%%%%%%%%%%%%%%%%%%%%%%%%%%%%%%%%%%%%%%%%%%%%%%%%%%%%%%%%
\section{Proof of Theorem~\ref{Minimum Degree}(i)}\label{sec:bestposs}

If $\delta = n-1$, then $K_n$ contains at most
\[
\frac{n-1}{2} = \frac{n + (n-2)}{4} < \frac{n +1+ \sqrt{n(n-2)}}{4} =
\frac{\delta + 2 + \sqrt{n(2\delta - n)}}{4}
\]
edge-disjoint Hamilton cycles. So from now on we will assume that
$\delta \leq n-2$.

The construction of the graph $G$ is very similar to the
construction in the introduction showing that we might not have more
than $\lfloor (n+2)/8\rfloor$ edge-disjoint Hamilton cycles. Here, $G$ will be the
disjoint union of an empty graph $A$ of size $n - \Delta$, and a
$(\delta + \Delta - n)$-regular graph $B$ on $\Delta$ vertices,
together with all edges between $A$ and $B$ (see
Figure~\ref{Example2}). Such a graph $B$ exists if for example
$\Delta$ is even (see e.g.~\cite[Problem 5.2 ]{Lovasz07}).

\begin{figure}[h]
\includegraphics[scale=1]{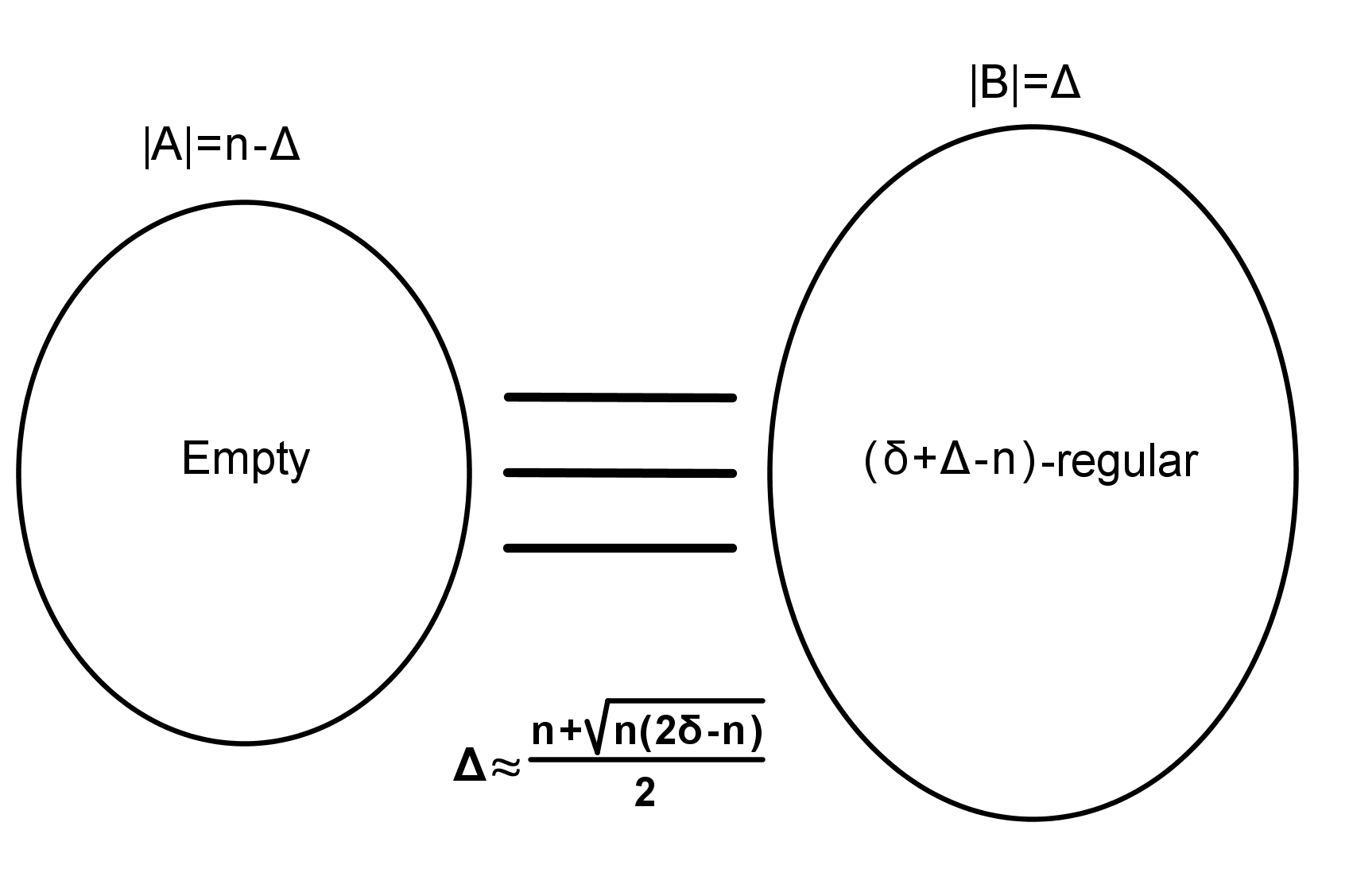}
\caption{A graph $G$ on $n$ vertices with minimum degree at least
$\delta > n/2$ having at most $\frac{\delta + 2 +
\sqrt{n(2\delta-n)}}{2}$ edge-disjoint Hamilton cycles.}
\label{Example2}
\end{figure}

The value of $\Delta$ will be chosen later. At the moment we will
only demand that $\Delta$ is an even integer satisfying $\delta \leq
\Delta \leq n-1$. Observe that $G$ is a graph on $n$ vertices with
minimum degree $\delta$ and maximum degree $\Delta$. We claim that
$G$ cannot contain more than $\frac{\Delta (\delta + \Delta -
n)}{2(2 \Delta - n)}$ edge-disjoint Hamilton cycles. In fact, we
claim that it can only contain an $r$-factor if $r \le \frac{\Delta
(\delta + \Delta - n)}{2 \Delta - n}$. Indeed, given any
$r$-factor $H$ of $G$, since $e_H(A,B) = \sum_{v \in A}d_H(v) = r (n
- \Delta)$, we deduce that
\[
r \Delta = \sum_{v \in B}d_H(v) \leq \Delta(\delta + \Delta - n) +
r(n - \Delta)
\]
from which our claim follows. It remains to make a judicious choice
for $\Delta$ and to show that it implies the result. One can check that
$\frac{n + \sqrt{n(2\delta - n)}}{2}$ minimizes $f(x) = x(\delta + x
- n)/(2x-n)$ in $[\delta,n]$. (This is only used as a heuristic and
it is not needed in our argument.) It can be also checked that since
$\delta \leq n-2$ we have $\delta \leq \frac{n + \sqrt{n(2\delta -
n)}}{2} < n-1$. Indeed, the first inequality holds if and only if
$(2\delta - n)^2 \leq n(2\delta - n)$ which is true as $n/2\leq \delta \leq
n$ and the second inequality holds since $$\frac{n + \sqrt{n(2\delta
- n)}}{2} \leq \frac{n + \sqrt{n^2 - 4n}}{2} < \frac{n + (n-2)}{2} =
n-1.$$
We define $\Delta = \frac{n + \sqrt{n(2\delta - n)}}{2} +
\eps$, where $\eps$ is chosen so that $|\eps| \leq 1$ and $\Delta$
is an even integer satisfying $\delta \leq \Delta \leq n-1$. We
claim that this value of $\Delta$ gives the desired bound. To see
this, recall that if $G$ contains an $r$-factor, then we must have
\[
r \le \frac{\Delta (\delta + \Delta - n)}{2 \Delta - n} =
\frac{\delta}{2} + \frac{n \delta/2}{2\Delta - n} -
\frac{\Delta(n-\Delta)}{2\Delta - n}
\]
and that
\begin{align*}
\Delta(n - \Delta) &= \left(\frac{n}{2} +
\left(\frac{\sqrt{n(2\delta - n)}}{2} + \eps\right) \right)
\left(\frac{n}{2} -
\left(\frac{\sqrt{n(2\delta - n)}}{2} + \eps\right) \right)\\
& = \frac{n^2}{4} - \frac{n(2\delta - n)}{4} -
\eps\sqrt{n(2\delta - n)} - \eps^2
= \frac{n^2 - n\delta}{2} - \eps\sqrt{n(2\delta - n)} - \eps^2.
\end{align*}
Thus
\[
r \leq \frac{\delta}{2} + \frac{n(2\delta - n) + 2\eps
\sqrt{n(2\delta - n)}}{2(2\Delta - n)} + \frac{\eps^2}{2\Delta - n}.
\]
Since also
$
(2\Delta - n)\sqrt{n(2\delta-n)} = n(2\delta-n) +
2\eps\sqrt{n(2\delta - n)},
$
we deduce that
\[
r \leq \frac{\delta + \sqrt{n(2\delta-n)}}{2} +
\frac{\eps^2}{2\Delta - n} \leq \frac{\delta + 2 +
\sqrt{n(2\delta-n)}}{2},
\]
as required.

%%%%%%%%%%%%%%%%%%%%%%%%%%%%%%%%%%%%%%%%%%%%%%%%%%%%%%%%%%%%%%%%%%%%%%%%%%%%%%%%%%%%%%%%%%%%%%%%%%%%%%%%%%%%%%%%%%%%%%%%%%%%%%%%%%%%%%%%%%%%%%%%%%%%%%%%%

\section{Proof overview of the main theorems}

In the overview we will only discuss the case in which $G$ is
regular, say of degree $\lambda n$ with $\lambda >1/2$.\COMMENT{$d$ is used for the
regularity density.} The other cases are similar and in fact will be
treated simultaneously in the proof itself.
We begin by defining additional constants such that

\[0<  \eps \ll \beta  \ll \gamma \ll 1.  \]

By applying the Regularity Lemma to $G$, we obtain a partition of $G$ into clusters
$V_1,\ldots,V_k$ and an exceptional set~$V_0$. Moreover, most pairs of clusters span an $\eps$-regular
(i.e.~quasi-random) bipartite graph.
It turns out that for our purposes the `standard' reduced graph defined on the clusters does not capture enough information
about the original graph $G$.
So we will instead work with the multigraph $R$ on vertex set
$\{V_1,\ldots,V_k\}$ in which there are exactly $\ell_{ij} :=
\lfloor d(V_i,V_j)/\beta \rfloor$ multiple edges between the
vertices $V_i$ and $V_j$ of $R$ (provided that the pair $(V_i,V_j)$ is $\eps$-regular).
Here $d(V_i,V_j)$ denotes the density of
the bipartite subgraph induced by $V_i$ and $V_j$. Then $R$ is almost regular, with
all degrees close to $\lambda k/\beta$. In particular, we can use
Tutte's $f$-factor theorem (see Theorem~\ref{Factor Theorem}(ii)) to
deduce that $R$ contains an $r$-regular submultigraph $R'$ where $r$
is still close to $\lambda k/\beta$. By Petersen's theorem, $R'$ can be
decomposed into 2-factors and by splitting the clusters if necessary
we may assume that $R'$ can be decomposed into 2-factors such that
every cycle has even length. In particular, $R'$ can be decomposed
into $r$ perfect matchings, say $M_1, \ldots , M_r$.

We now partition (most of) the edges of $G$ in such a way that each matching edge
is assigned roughly the same number of edges of $G$. More precisely,
given two
adjacent clusters $U,V$ of $R$, the edge set $E_{G}(U,V)$ can
be decomposed into $\ell_{ij}$ bipartite graphs so that each is $\eps$-regular with density close to
$\beta$. These $\ell_{ij}$ regular pairs correspond to
the $\ell_{ij}$ edges in $R$ between $U$ and $V$. Thus, for each matching
$M_i$, we can define a subgraph $G_i$ of $G$ such that all $G_i$'s
are edge-disjoint and they consist of a union of $k':=k/2$  pairs of clusters which are $\eps$-regular
of density about $\beta$, together with the exceptional set $V_0$.
Let $m$ denote the size of a cluster.
By moving some additional vertices to the exceptional set, we may assume that
for every such pair of clusters of $G_i$, all vertices have degree close to $\beta m$.
So for each
$i$, we now have a set $V_{0i}$ consisting of the exceptional set
$V_0$ together with the vertices moved in the previous step.
For each $G_i$ we will aim to find close to
$\beta m/2$ edge-disjoint Hamilton cycles consisting mostly of edges
of $G_i$ and a few further edges which do not belong to any of the $G_i$.

Because $G$ may not have many edges which do not belong to any of
the $G_i$, (in fact it may have none) before proceeding we extract
random subsets of edges from each $G_i$ to get disjoint subgraphs
$H_1,H_2$ and $H_3$ of $G$ each of density about $\gamma$ which
satisfy several other useful properties as well. Moreover, each pair
of clusters of $G_i$ corresponding to an edge of $M_i$ will still be
super-regular of density almost $\beta$. Each of the subgraphs
$H_1,H_2$ and $H_3$ will be used for a different purpose in the
proof.

$H_1$ will be used to connect the vertices of each $V_{0i}$ to
$G_i \setminus V_{0i}$ so that the vertices of $V_{0i}$ have
almost $\beta m$ neighbours in $V(G_i) \setminus V_{0i}$.\COMMENT{In fact we are
not just using $H_1$ here. We will also use edges which do not
belong to any $G_i$ or $H_j$.} Moreover the edges added to $G_i$
will be well spread-out in the sense that no vertex of $G_i
\setminus V_{0i}$ will have large degree in $V_{0i}$.
So every vertex of $G_i$ now has degree close to $\beta m$.

Next, our aim is to find an $s$-regular spanning subgraph $S_i$ of $G_i$
with $s$ close to $\beta m$. In order to achieve this, it turns out that
we will first need to add some edges to $G_i$ between pairs of clusters which do not
correspond to edges of $M_i$. We will take these from $H_2$.

We may assume that the degree of $S_i$ is even and thus by
Petersen's theorem it can be decomposed into 2-factors. It will
remain to use the edges of $H_3$ to transform each of these 2-factors into a
Hamilton cycle. Several problems may arise here. Most notably, the
number of edges of $H_3$ we will need in order to transform a given
2-factor $F$ into a Hamilton cycle will be proportional to the
number of cycles of $F$. So if we have a linear number of
2-factors $F$ which have a linear number of cycles, then we will
need to use a quadratic number of edges from $H_3$ which would destroy most of its useful properties.
However, a result from~\cite{Frieze&Krivelevich05} based on estimating
the permanent of a matrix implies that the average number of cycles in a $2$-factor
of $S_i$ is $o(n)$. We will apply a variant of this result proved
in~\cite{randmatch,Kuhn&Osthus&Treglown}. So we can assume that our $2$-factors
have $o(n)$ cycles.

To complete the proof we will consider a random  partition of the graph $H_3$ into
subgraphs $H_{3,1}, \ldots , H_{3,r}$, one for each graph $G_i$.
We will
use the edges of $H_{3,i}$ to transform all 2-factors of $S_i$ into
Hamilton cycles. We will achieve this by considering each 2-factor $F$
successively. For each $F$, we will use the rotation-extension technique to
successively merge its cycles. Roughly speaking, this means that we obtain a path $P$
with endpoints $x$ and $y$ (say) by removing a suitable edge of a cycle of $F$.
If $F$ is not a Hamilton cycle and $H_{3,i}$ has an edge from $x$ or $y$ to another
cycle $C$ of $F$, and we can extend
$P$ to a path containing all vertices of $C$ as well. We continue in this
way until in $H_{3,i}$ both endpoints of $P$ have all their neighbours on $P$.
We can then use this to find a cycle $C'$ containing precisely all vertices of $P$.
In the final step, we make use (amongst others) of the quasi-randomness of the
bipartite graphs which form $H_{3,i}$.

%%%%%%%%%%%%%%%%%%%%%%%%%%%%%%%%%%%%%%%%%%%%%%%%%%%%%%%%%%%%%%%%%%%%%%%%%%%%%%%%%%%%%%%%%%%%%%%%%%%%%%%%%%%%%%%%%%%%%%%%%%%%%%%%%%%%%%%%%%%%%%%%%%%%%%%%%

\section{Notation and Tools}

\subsection{Notation}

Given vertex sets $A$ and $B$ in a graph $G$, we write $E_G(A,B)$
for the set of all edges $ab$ with $a \in A$ and $b \in B$ and put
$e_G(A,B) = |E_G(A,B)|$. We write $(A,B)_G$ for the bipartite subgraph of $G$
whose vertex classes are $A$ and $B$ and whose set of edges is $E_G(A,B)$.
We drop the subscripts if this is unambiguous. Given a set $E'\subseteq E_G(A,B)$,
we also write $(A,B)_{E'}$ for the bipartite subgraph of~$G$ whose vertex classes
are $A$ and $B$ and whose set of edges is~$E'$.
Given a vertex $x$ of $G$ and a set $A\subseteq V(G)$, we write $d_A(x)$
for the number of neighbours of $x$ in~$A$.

To prove Theorems~\ref{Almost regular} and~\ref{Minimum Degree}(ii)
it will be convenient to work with multigraphs instead of just
(simple) graphs. All multigraphs considered in this paper will be
without loops.

We write $a = b \pm c$ to mean that the real numbers $a,b,c$ satisfy
$|a-b| \leq c$.
To avoid unnecessarily complicated calculations we will sometimes
omit floor and ceiling signs and treat large numbers as if they were
integers. We will also sometimes treat large numbers as if they were even
integers.

\subsection{Chernoff Bounds}

Recall that a Bernoulli random variable with parameter $p$ takes the
value 1 with probability $p$ and the value 0 with probability $1-p$.
We will use the following Chernoff-type bound for a sum of
independent Bernoulli random variables.\COMMENT{formally, we do need it in this generality
as when adding edges of $H_1$, some of the $p_i$ may be 0.}

\begin{theorem}[Chernoff Inequality]\label{Chernoff}
Let $X_1,\ldots,X_n$ be independent Bernoulli random variables with
parameters $p_1,\ldots,p_n$ respectively and let $X = X_1 + \cdots +
X_n$. Then
\[
\P(|X - \E X| \geq t) \leq 2\exp{\left(- \frac{t^2}{3 \E X}
\right)}.
\]
\end{theorem}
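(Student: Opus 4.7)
The plan is to apply the classical exponential-moment (Chernoff) method, optimise the free parameter, and reduce the resulting Bernstein-type bound to the stated form. For the upper tail, fix $\lambda>0$ and apply Markov's inequality to $e^{\lambda(X-\E X)}$:
\[
\P(X - \E X \geq t) \leq e^{-\lambda t}\,\E\bigl[e^{\lambda(X-\E X)}\bigr] = e^{-\lambda t}\prod_{i=1}^{n}\E\bigl[e^{\lambda(X_i-p_i)}\bigr],
\]
where the factorisation uses independence. For each Bernoulli $X_i$, direct computation gives $\E[e^{\lambda X_i}] = 1 - p_i + p_i e^\lambda \leq \exp(p_i(e^\lambda-1))$ (from $1+x\leq e^x$), so $\E[e^{\lambda(X_i-p_i)}] \leq \exp(p_i(e^\lambda-1-\lambda))$. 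Writing $\mu := \E X = \sum_i p_i$, the product telescopes and
\[
\P(X - \mu \geq t) \leq \exp\bigl(\mu(e^\lambda-1-\lambda) - \lambda t\bigr).
\]

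Next I would optimise in $\lambda$: taking $\lambda := \ln(1+t/\mu)$ yields the sharp Cram\'er bound $\exp(-\mu\,h(t/\mu))$ with $h(x) := (1+x)\ln(1+x)-x$. The elementary inequality $h(x) \geq x^2/(2+2x/3)$ for $x\geq 0$ (verified by noting both sides vanish at $0$ and comparing derivatives) gives the Bernstein form $\P(X - \mu \geq t) \leq \exp(-t^2/(2\mu+2t/3))$, which is at most $\exp(-t^2/(3\mu))$ in the regime $t \leq 3\mu/2$ that covers all applications of the theorem in the paper. The lower tail is handled by running the same scheme with the exponent $-\lambda$ in place of $\lambda$; here one uses the stronger estimate $e^{-\lambda}\leq 1 - \lambda + \lambda^2/2$ for $\lambda\geq 0$, which gives $\E[e^{-\lambda(X - \mu)}] \leq \exp(\mu\lambda^2/2)$ and then, after optimising $\lambda = t/\mu$, $\P(X - \mu \leq -t) \leq \exp(-t^2/(2\mu)) \leq \exp(-t^2/(3\mu))$ for every $t\geq 0$. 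A union bound over the two tails supplies the factor of $2$.

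The only genuinely technical point is the scalar calculus inequality $h(x) \geq x^2/(2+2x/3)$; everything else is routine bookkeeping around the classical Chernoff recipe. I would therefore isolate this inequality as an auxiliary lemma, verify it once, and let the main estimate fall out of the MGF computation.
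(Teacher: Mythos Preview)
The paper does not prove this statement; it is quoted as a standard tool and used as a black box throughout Section~5. Your argument is the classical exponential-moment derivation of the Chernoff/Bernstein bound and is correct. You are also right to flag that the step $\exp\bigl(-t^2/(2\mu+2t/3)\bigr) \leq \exp\bigl(-t^2/(3\mu)\bigr)$ in the upper tail requires $t\leq 3\mu/2$; in fact the inequality as literally stated does not hold for arbitrary $t$ (a binomial with tiny $p_i$'s and $t\gg\mu$ already violates it), but every invocation in the paper has $t\ll\mu$, so the caveat is harmless. Your treatment of the lower tail, which genuinely holds for all $t\geq 0$, is clean.
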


In particular, since a binomial random variable $X$ with parameters
$n$ and $p$ is a sum of $n$ independent Bernoulli random variables,
the above inequality holds for binomial random variables as well.

\subsection{Regularity Lemma}

In the proof, we will use the degree form of
\Szemeredi's Regularity Lemma. Before stating it, we need to
introduce some notation. The {\em density} of a bipartite graph $G =
(A,B)$ with vertex classes $A$ and $B$ is defined to be $d_G(A,B) :=
\frac{e(A,B)}{|A||B|}$. We sometimes write $d(A,B)$ for $d_G(A,B)$ if this is
unambiguous. Given $\eps > 0$, we say that $G$ is
$\eps$-{\em regular} if for all subsets $X \subseteq A$ and $Y
\subseteq B$ with $|X| \geq \eps|A|$ and $|Y| \geq \eps |B|$ we have
that $|d(X,Y) - d(A,B)| < \eps$. Given $d\in [0,1]$,
we say that $G$ is $(\eps,d)$-{\em super-regular}
if it is $\eps$-regular and furthermore $d_G(a) \geq d|B|$ for all
$a \in A$ and $d_G(b) \geq d|A|$ for all $b \in B$. We will use the
following degree form of \Szemeredi's Regularity Lemma:

\begin{lemma}[Regularity Lemma; Degree form]
For every $\eps \in (0,1)$ and each positive integer $M'$, there are
positive integers $M$ and $n_0$ such that if $G$ is any graph on $n
\geq n_0$ vertices and $d \in [0,1]$ is any real number, then there
is a partition of the vertices of $G$ into $k+1$ classes
$V_0,V_1,\ldots,V_k$, and a spanning subgraph $G'$ of $G$ with the
following properties:
\begin{itemize}
\item $M' \leq k \leq M$;
\item $|V_0| \leq \eps n, |V_1| = \cdots = |V_k| =:m$;
\item $d_{G'}(v) \geq d_G(v) - (d + \eps)n$ for every $v \in V(G)$;
\item $G'[V_i]$ is empty for every $0 \leq i \leq k$;
\item all pairs $(V_i,V_j)$ with $1 \leq i < j \leq k$ are
$\eps$-regular with density either 0 or at least~$d$.
\end{itemize}
\end{lemma}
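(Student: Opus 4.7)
The plan is to derive the degree form from the standard Szemer\'edi Regularity Lemma, which produces a partition with at most $\eps' k^2$ non-$\eps'$-regular pairs but offers no a priori per-vertex control on how many edges may be lost. First I would apply the standard lemma with auxiliary parameters $\eps'$ and $M''$, where $\eps'$ is chosen much smaller than $\eps$ (for instance $\eps'=\eps^2/100$) and $M''\geq M'$ is large enough that $1/M''\leq \eps$; this guarantees in particular that the common cluster size satisfies $m\leq n/M''\leq \eps n$.

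The next step is to trim the partition so that every cluster has uniform control over its non-regular pairs. Call a cluster \emph{bad} if it participates in more than $\sqrt{\eps'}\,k$ non-regular pairs; a double counting argument shows that there are at most $2\sqrt{\eps'}\,k$ bad clusters. I would move all vertices of bad clusters into the exceptional set and discard any surplus vertices from the remaining clusters so that they share a common size. The enlarged exceptional set then has size at most $\eps' n+2\sqrt{\eps'}\,n\leq \eps n$, and every surviving cluster lies in at most $\sqrt{\eps'}\,k$ non-regular pairs.

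Now I would construct $G'$ by deleting from $G$ every edge that lies either inside some $V_i$ (including inside $V_0$), inside a non-$\eps'$-regular pair of clusters, or inside a pair of density less than $d$. The resulting $G'$ has $G'[V_i]$ empty for all $i$, and every surviving pair $(V_i,V_j)$ with $i,j\geq 1$ is $\eps'$-regular (hence $\eps$-regular) of density either $0$ or at least $d$. To verify the degree bound, for any $v$ in a cluster $V_i$ with $i\geq 1$ the edges removed at $v$ number at most $m$ (inside $V_i$), at most $\sqrt{\eps'}\,k\cdot m\leq \sqrt{\eps'}\,n$ (to clusters forming non-regular pairs with $V_i$), and at most $d\cdot k\cdot m\leq dn$ (to low-density neighbour clusters); edges from $v$ to $V_0$ are retained. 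For $v\in V_0$ only edges inside $V_0$ are deleted, a loss bounded by $|V_0|\leq \eps n$. In both cases the total loss is at most $(d+\eps)n$ once $\eps'$ is small enough. The main point requiring care is the calibration of $\eps'$ and of the threshold $\sqrt{\eps'}\,k$ defining bad clusters, so that the per-vertex loss from non-regular pairs is absorbed by the slack $\eps n$ in the conclusion without making the exceptional set too large.
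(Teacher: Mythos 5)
The paper cites this lemma without proof, so there is no in-paper argument to compare against; I will assess your derivation on its own terms. The overall plan of deducing the degree form from the plain regularity lemma is the right one, and the ``bad cluster'' step, together with the double-counting that shows at most $2\sqrt{\eps'}k$ clusters lie in more than $\sqrt{\eps'}k$ non-$\eps'$-regular pairs, correctly controls the contribution of irregular pairs to the degree loss. However, there is a genuine gap in the last estimate. You bound the edges lost from a vertex $v\in V_i$ to low-density neighbour clusters by $d\cdot k\cdot m\leq dn$, which implicitly assumes that $v$ has degree at most (roughly) $dm$ into each $\eps'$-regular low-density cluster $V_j$. That does not follow from $\eps'$-regularity. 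If $(V_i,V_j)$ is $\eps'$-regular of density $d_{ij}<d$, regularity only guarantees that \emph{all but at most} $\eps' m$ vertices of $V_i$ have degree at most $(d_{ij}+\eps')m$ into $V_j$; the remaining up to $\eps'm$ vertices can have degree as large as $m$. A vertex that happens to be exceptional in many of its low-density pairs could lose close to $n$ edges, which is not absorbed by the slack $\eps n$ no matter how small $\eps'$ is.

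The repair is a further enlargement of the exceptional set, at the level of vertices rather than whole clusters. For each $i$, count incidences $(v,j)$ with $v\in V_i$, $(V_i,V_j)$ an $\eps'$-regular pair of density $<d$, and $d_{V_j}(v)>(d+\eps')m$; regularity bounds this count by $k\,\eps'm$, so at most $\sqrt{\eps'}\,m$ vertices of $V_i$ lie in more than $\sqrt{\eps'}\,k$ such incidences. Move all those vertices into $V_0$ (this adds at most $\sqrt{\eps'}\,n$ to $|V_0|$, still comfortably below $\eps n$ if $\eps'\ll\eps^2$). For every surviving $v\in V_i$ the loss to low-density pairs is then at most $\sqrt{\eps'}\,km+(d+\eps')km\leq(d+\eps'+\sqrt{\eps'})n$, and combining with the other terms gives $(d+\eps)n$ once $\eps'$ is small enough. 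One must also re-trim the clusters after this vertex removal so that they again share a common size, and at that point verify that removing a $\sqrt{\eps'}$-fraction of vertices preserves $\eps$-regularity (which it does, since $\sqrt{\eps'}\ll\eps$), and that the surviving densities stay either $0$ or $\geq d$ up to a tolerable error. Two minor points: in your write-up the ``discard surplus vertices'' step after removing bad clusters is vacuous since the remaining clusters already have a common size, and you should take $M''$ a bit larger than $M'/(1-2\sqrt{\eps'})$ so that $k\geq M'$ survives the deletion of up to $2\sqrt{\eps'}\,k$ clusters.
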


We call $V_1,\ldots,V_k$ the {\em clusters} of the partition and
$V_0$ the {\em exceptional set}.

\subsection{Factor Theorems}
An $r$-{\em factor} of a multigraph $G$ is an $r$-regular
submultigraph $H$ of $G$. We will use the following classical result of Petersen.
\begin{theorem}[Petersen's Theorem]\label{Petersen}
Every regular multigraph of positive even degree contains a
2-factor.
\end{theorem}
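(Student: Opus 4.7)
The plan is to reduce the problem to finding a perfect matching in an auxiliary bipartite graph, which is the classical route due essentially to Petersen himself. Let $G$ be a $2k$-regular multigraph with $k \geq 1$ (loopless by convention in this paper). Since every vertex has even degree, each connected component of $G$ admits an Eulerian circuit. I would fix such a circuit in each component and use it to orient the edges of $G$ consistently along the direction of traversal. In the resulting orientation, every vertex $v$ has in-degree equal to its out-degree equal to $k$, because the Eulerian circuit enters and leaves $v$ the same number of times.

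Next I would build an auxiliary bipartite multigraph $B$ on vertex classes $V^+ = \{v^+ : v \in V(G)\}$ and $V^- = \{v^- : v \in V(G)\}$, by letting each oriented edge $u \to v$ of $G$ contribute exactly one edge $u^+ v^-$ in $B$ (so that parallel edges of $G$ yield parallel edges of $B$). Since each $v$ has out-degree $k$, the vertex $v^+$ has degree $k$ in $B$, and similarly each $v^-$ has degree $k$. Hence $B$ is a $k$-regular bipartite multigraph, and by K\"onig's edge colouring theorem (equivalently, by the standard consequence of Hall's theorem that every regular bipartite multigraph has a perfect matching) there exists a perfect matching $M$ of $B$.

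Finally I would interpret $M$ as a subgraph of $G$. Each edge $u^+ v^-$ of $M$ corresponds to a unique oriented edge of $G$, and dropping orientations gives a set $F \subseteq E(G)$. A vertex $v$ contributes to $M$ exactly once as $v^+$ (supplying one out-edge incident with $v$) and exactly once as $v^-$ (supplying one in-edge incident with $v$); these two edges of $G$ are distinct because they point in opposite directions at $v$. Therefore every vertex has degree exactly $2$ in $F$, so $F$ is the desired $2$-factor. The only point that requires care is the multigraph aspect, namely that parallel edges must remain distinguishable so that an edge $u^+ v^-$ in $M$ really selects a single edge of $G$; this is automatic because the Eulerian circuit traverses each edge of $G$ precisely once and the definition of $B$ simply records this traversal. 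I expect this bookkeeping to be the only minor obstacle; the substantive content of the argument is packaged in the Euler circuit existence and in K\"onig's matching theorem.
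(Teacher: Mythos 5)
Your argument is correct, and it is the classical proof. The paper itself does not prove this theorem---it simply quotes Petersen's result as a well-known tool (Theorem~\ref{Petersen}) and uses it repeatedly, so there is no in-paper argument to compare against. Your route through an Eulerian orientation of each component, passage to the $k$-regular bipartite auxiliary multigraph on $V^+\cup V^-$, and K\"onig's (or Hall's) theorem is exactly the standard textbook proof. The one point you flagged, that the in-edge and out-edge matched at a vertex $v$ are genuinely distinct, is handled correctly: a single loopless edge cannot be oriented both into and out of $v$, and loops are excluded by the paper's convention. So the resulting edge set $F$ has every vertex of degree exactly $2$, giving the required $2$-factor.
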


Furthermore, we will use Tutte's $f$-factor theorem~\cite{Tutte98} which gives a necessary and sufficient condition for a
multigraph to contain an $f$-factor. (In fact, the theorem is more
general.) Before stating it we need to introduce some notation. Given a
multigraph $G$, a positive integer $r$, and disjoint subsets $T,U$
of $V(G)$, we say that a component $C$ of $G[U]$ is {\em odd (with
respect to $r$ and $T$)} if $e(C,T) + r|C|$ is odd. We write $q(U)$
for the number of odd components of~$U$.

\begin{theorem}\label{Tutte}
A multigraph $G$ contains an $r$-factor if and only if for every
partition of the vertex set of $G$ into sets $S,T,U$, we have
\begin{equation}\label{Tutte1}
\sum_{v \in T}d(v) - e(S,T) + r(|S| - |T|) \geq q(U).
\end{equation}
\end{theorem}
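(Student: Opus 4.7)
This is a classical result due to Tutte, and I would follow the standard two-direction approach. For necessity, suppose $F$ is an $r$-factor of $G$ and fix any partition $V(G)=S\cup T\cup U$. For each odd component $C$ of $G[U]$, the identity $r|C|=\sum_{v\in C}d_F(v)=2e_F(C,C)+e_F(C,S)+e_F(C,T)$ gives $e_F(C,S)+e_F(C,T)\equiv r|C|\pmod 2$, while by definition $e_G(C,T)+r|C|$ is odd. Combining the two parities forces the nonnegative quantity $e_F(C,S)+(e_G(C,T)-e_F(C,T))$ to be odd, and hence at least $1$, for every odd component $C$. Summing over all odd components yields $e_F(U,S)+e_G(U,T)-e_F(U,T)\geq q(U)$. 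Substituting $e_F(S,U)=r|S|-2e_F(S,S)-e_F(S,T)$ and $e_F(T,U)=r|T|-2e_F(T,T)-e_F(S,T)$, and using $e_F(T,T)\leq e_G(T,T)$ and $e_F(S,S)\geq 0$, turns the resulting inequality into exactly~\eqref{Tutte1}.

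For sufficiency, I would reduce to Tutte's 1-factor theorem via the classical gadget construction. For each $v\in V(G)$ introduce a bipartite gadget with parts $A_v$ of size $d_G(v)$ (one vertex $a_v^e$ for each edge $e$ of $G$ incident to $v$) and $D_v$ of size $d_G(v)-r$, with all edges between $A_v$ and $D_v$ present. For each edge $e=uv$ of $G$ also add the edge $a_u^ea_v^e$; let $H$ denote the resulting (simple) auxiliary graph. A direct check shows that perfect matchings of $H$ correspond bijectively to $r$-factors of $G$: every vertex of $D_v$ must be matched inside its gadget, so exactly $r$ vertices of $A_v$ are matched across gadgets, and these indicate the $r$ edges at $v$ in the factor.

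Now suppose $G$ has no $r$-factor, so $H$ has no perfect matching, and Tutte's 1-factor theorem yields $X\subseteq V(H)$ with $q(H-X)>|X|$. Choose such an $X$ that is extremal with respect to the deficiency $q(H-X)-|X|$. By a standard exchange argument one may assume that $X$ is in canonical form: for every $v\in V(G)$ either $A_v\subseteq X$ or $A_v\cap X=\emptyset$, and similarly for $D_v$. Defining $S:=\{v:A_v\subseteq X,\ D_v\cap X=\emptyset\}$, $T:=\{v:D_v\subseteq X,\ A_v\cap X=\emptyset\}$ and $U:=V(G)\setminus(S\cup T)$, a careful accounting identifies the odd components of $H-X$ with the odd components of $G[U]$ (in the sense of the theorem), plus controlled contributions from gadgets corresponding to $S$ and $T$. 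Expanding $|X|$ and $q(H-X)$ in terms of $|S|$, $|T|$, the edge counts $e_G(S,T)$, $\sum_{v\in T}d_G(v)$, and $r$, the inequality $q(H-X)>|X|$ transcribes directly into a violation of~\eqref{Tutte1}, a contradiction.

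The main obstacle is the canonical-form reduction together with the odd-component bookkeeping: one needs an exchange argument that pushes partially-included gadgets into fully-in or fully-out configurations without decreasing the deficiency, and then a precise parity match between odd components of $H-X$ and the parity condition ``$e_G(C,T)+r|C|$ odd'' in $G$. This is where the somewhat unusual form of~\eqref{Tutte1}, with both the sum of degrees and the term $r(|S|-|T|)$, emerges naturally from counting matched versus unmatched dummy vertices across the gadgets.
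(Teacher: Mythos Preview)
The paper does not prove this theorem; it is simply quoted as Tutte's classical $f$-factor theorem (citing \cite{Tutte98}) and then applied to derive Theorem~\ref{Factor Theorem}. So there is no proof in the paper to compare against.

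Your necessity argument is correct and cleanly written: the parity computation for each odd component $C$ is right, and the substitution leading to~\eqref{Tutte1} checks out once one rewrites $\sum_{v\in T}d_G(v)-e_G(S,T)=2e_G(T,T)+e_G(U,T)$.

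Your sufficiency sketch via the gadget reduction to the $1$-factor theorem is the standard route and is the right idea. As you yourself flag, the real content lies in the step you label ``canonical form plus bookkeeping'': pushing a deficiency-maximising set $X$ into a form where each gadget is either entirely inside or entirely outside $X$, and then matching the odd components of $H-X$ with the parity condition $e_G(C,T)+r|C|$ odd. Your outline does not actually carry this out, and this is where the proof can go wrong if one is careless (for instance, one must also handle vertices $v$ with $A_v\cup D_v\subseteq X$ or with $(A_v\cup D_v)\cap X=\emptyset$, which your definition puts into $U$, and then verify that the corresponding gadgets contribute the right parity and the right count). None of this is a genuine obstacle, but as written the sufficiency direction is a plan rather than a proof. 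Since the paper treats the theorem as a black box, that is entirely in keeping with its use here.
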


In fact, we will only need the following consequence of
Theorem~\ref{Tutte}.

\begin{theorem}\label{Factor Theorem}
Let $G$ be a multigraph on $n$ vertices of minimum degree $\delta
\geq \ell n/2$, in which every pair of vertices is joined by at most
$\ell$ edges.
\begin{itemize}
\item[(i)] Let $r$ be an even number such that $r \leq \frac{\delta +
\sqrt{\ell n (2 \delta - \ell n)}}{2}$. Then $G$ contains an
$r$-factor.
\item[(ii)] Let $0 < \xi < 1/9$ and suppose
$ (1/2 + \xi)\ell n \leq \Delta(G) \leq \delta + \xi^2 \ell n$. If $r$ is an even number
such that $r \leq \delta - \xi \ell n$ and $n$ is sufficiently
large, then $G$ contains an $r$-factor.
\end{itemize}
\end{theorem}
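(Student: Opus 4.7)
The plan is to apply Tutte's $f$-factor theorem (Theorem~\ref{Tutte}) and argue by contradiction in both parts. Suppose $G$ has no $r$-factor; then there exist disjoint $S, T, U$ partitioning $V(G)$ with
\[
\sum_{v \in T} d(v) - e(S, T) + r(|S| - |T|) \leq q(U) - 1.
\]
Write $s, t, u$ for $|S|, |T|, |U|$. Two structural facts are used throughout. First, since $r$ is even, a component $C$ of $G[U]$ is odd iff $e(C, T)$ is odd, so $q(U) \leq e(T, U)$ and $q(U) = 0$ whenever $T = \emptyset$. Second, any component $C$ of $G[U]$ contains a vertex with $\geq \delta$ neighbors in $S \cup T \cup C$ of multiplicity at most $\ell$, giving $|C| \geq \delta/\ell - s - t + 1$ when positive, and hence $q(U) \leq u/\max(1, \delta/\ell - s - t + 1)$.

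For (i), the case $s \geq t$ is immediate: writing $\sum_{v \in T}d(v) - e(S, T) = 2 e(T) + e(T, U) \geq q(U)$ via the first fact and adding $r(s-t) \geq 0$ yields Tutte's inequality. So assume $s < t$. The worst case is the weakest bound $q(U) \leq u$ (i.e.\ $s + t \geq \delta/\ell$), since the tighter bound $q(U) \leq u/L$ when $s+t < \delta/\ell$ only makes the Tutte inequality easier to satisfy. Applying $\sum_{v \in T}d(v) \geq \delta t$, $e(S, T) \leq \ell s t$, and $q(U) \leq u$, and writing $s = n - t - u$, the violation becomes, as a quadratic in $t$,
\[
\ell t^2 + (\delta - \ell n + \ell u - 2r) t + rn - (r + 1) u < 0.
\]
Its minimum in $t$ equals $rn - (r+1)u - (\ell n + 2r - \ell u - \delta)^2/(4\ell)$, which is concave in $u$ and (after checking the boundary conditions $0 \leq u \leq n - \delta/\ell$) minimized at $u = 0$. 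The resulting condition $(\ell n + 2r - \delta)^2 > 4 \ell r n$ rearranges to $(2r - \delta)^2 > \ell n (2\delta - \ell n)$, i.e.\ $r > (\delta + \sqrt{\ell n (2\delta - \ell n)})/2$, contradicting the hypothesis.

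For (ii), the first fact combined with $e(T, U) = \sum_{v \in T}d(v) - e(S, T) - 2 e(T)$ and the Tutte violation yields $r(t - s) \geq 2 e(T) + 1 \geq 1$, forcing $t \geq s + 1$. Using $\sum_{v \in T}d(v) \geq \delta t$, $e(S, T) \leq \Delta \min(s, t) = \Delta s$, and $q(U) \leq u$ gives
\[
(\delta - r + 1) t + (r + 1 - \Delta) s \leq n - 1.
\]
Since $\Delta \geq \delta \geq r + \xi \ell n$ and $n$ is large, $r + 1 - \Delta < 0$, so dropping the nonpositive $s$-term yields $t \leq (n - 1)/(\delta - r + 1) \leq 1/(\xi \ell)$, and hence $s + t$ is bounded by an absolute constant independent of $n$. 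The second fact then gives $|C| \geq n/2 - O(1)$ for every component of $G[U]$, so the underlying simple graph of $G[U]$ has minimum degree exceeding $u/2$ and is connected, whence $q(U) \leq 1$. The violation now reads $(\delta - r) t \leq (\Delta - r) s$, and since $\Delta - r \leq (1 + \xi)(\delta - r)$ (from $\Delta \leq \delta + \xi^2 \ell n$ and $\delta - r \geq \xi \ell n$) we obtain $t \leq (1 + \xi) s$. Combined with $t \geq s + 1$ this forces $s \geq 1/\xi > 9$, while $s \leq t - 1 \leq 1/(\xi \ell) - 1 < 1/\xi$ (using $\xi < 1/9$), a contradiction. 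The main technical obstacle is the minimization in (i) verifying that $u = 0$ is the binding case; in (ii) the key conceptual step is chaining parity, almost-regularity, and the minimum-degree condition to force $G[U]$ to be essentially complete.
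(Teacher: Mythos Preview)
Your optimization approach to~(i) is a different route from the paper's four-case analysis and is essentially sound, but three loose ends remain. First, the condition you extract at $u=0$, namely $(2r-\delta)^2 > \ell n(2\delta-\ell n)$, is equivalent to $r > (\delta+\sqrt{\ell n(2\delta-\ell n)})/2$ \emph{or} $r < (\delta-\sqrt{\ell n(2\delta-\ell n)})/2$; you only rule out the first branch. The paper handles this by invoking Petersen's theorem to reduce to the maximal even $r$, and you need to do the same. Second, comparing the two endpoint values shows that your concave function is minimized at $u=0$ only when $\ell n-\delta\ge 4$; the paper treats the exceptional case $\delta\ge\ell n-3$ separately. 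Third, the regime $s<t$, $s+t<\delta/\ell$ is dismissed as ``easier'', but you have only established the quadratic inequality with the crude bound $q(U)\le u$ on the interval $[0,\,n-\delta/\ell]$, so this range still requires an argument.

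The proof of~(ii) contains a genuine error at its central step. From
\[
(\delta-r+1)t+(r+1-\Delta)s \le n-1
\]
with $r+1-\Delta<0$ and $s\ge 0$, you \emph{cannot} conclude $(\delta-r+1)t\le n-1$ by ``dropping the nonpositive $s$-term'': discarding a nonpositive summand from the left-hand side can only increase it. Concretely, if $G$ is $\delta$-regular and one takes $s=t-1$, the displayed inequality becomes $2t\le n-\delta+r=n-\xi\ell n$, which permits $t$ of order $n$, not $t\le 1/(\xi\ell)$. Since every subsequent step---the bound $s+t=O(1)$, the conclusion $q(U)\le 1$, and the final ratio comparison---rests on this, the argument for~(ii) collapses. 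The paper instead reuses the same case split as in~(i), bringing in the almost-regularity hypothesis $\Delta\le\delta+\xi^2\ell n$ only in the cases where $|T|$ or $|S|$ is large.
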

The case when $\ell=1$ and $r$ is close to $n/4$ in~(i) was already proven by
Katerinis~\cite{katerinis}.

\begin{proof}
By Theorem~\ref{Petersen}, in both (i) and (ii) it suffices to consider the case that
$r$ is the maximal positive even integer satisfying the conditions.
Observe that since $\delta \leq \ell (n-1) < \ell n$ it follows that
$\ell n(2\delta - \ell n) = \delta^2 - (\ell n - \delta)^2 <
\delta^2$, so in case (i) we have $r < \delta$ and since both $r$
and $\delta$ are integers we have $r \leq \delta - 1$.\COMMENT{Here
I am using the fact that the multigraph is loopless.} This also
holds in case (ii).

By Theorem~\ref{Tutte}, it is enough to show (in both cases) that
\eqref{Tutte1} holds for every partition of the vertex set of $G$
into sets $S,T$ and $U$.

\medskip

\noindent
\textbf{Case~1.} \emph{$\ell|T| \leq r-1$ and $\ell |S|
\leq \delta - r$.}

\smallskip

Since in this case $d_T(v) \leq \ell |T| \leq r-1$ for every $v \in
V(G)$, the left hand side of \eqref{Tutte1} is
\[
\sum_{v \in T}(d(v) - r) + \sum_{v \in S}(r - d_T(v)) \geq |T| +
|S|.
\]
So in this case, it is enough to show that $q(U) \leq |T| + |S|$. If
$|T| = 0$, the result holds since in this case no component of $G[U]$
is odd, i.e~$q(U)=0$. If $|T|=1$ and $|S|=0$, then the degree
conditions imply that $G[U]$ is connected and so $q(U) \leq 1 = |T| +
|S|$. (Indeed, the degree conditions imply that the undirected graph
obtained from $G$ by ignoring multiple edges has minimum degree at
least $n/2$ and so any subgraph of it on $n-1$ vertices must be
connected.) Thus in this case, we may assume that $2 \leq |T| + |S|
\leq \frac{\delta - 1}{\ell}$. Observe that every vertex $v\in U$
has at most $\ell(|T| + |S|)$ neighbours in $T \cup S$ when counting
multiplicity and so it has at least $\frac{\delta - \ell(|T| +
|S|)}{\ell}$ distinct neighbours in $U$. In particular, every
component of $G[U]$ contains at least $\frac{\delta + \ell - \ell(|T| +
|S|)}{\ell}$ vertices and so certainly $q(U) \leq \frac{\ell
|U|}{\delta + \ell - \ell(|T| + |S|)}$. Writing $k:= |T| + |S|$, it
is enough in this case to prove that $k \geq \frac{\ell(n-k)}{\delta
+ \ell - \ell k}$. But this is equivalent to proving that $k\delta +
2k\ell - \ell k^2 - \ell n \geq 0$, which is true since the left hand
side is equal to $(k-2)(\delta - \ell k) + 2\delta - \ell n$.

\medskip

\noindent \textbf{Case~2.} $\ell |T| = r$.

\smallskip

Since for every vertex $v\in S$ we have $d_T(v) \leq \ell|T| = r$, it
follows that $r|S|\ge e(S,T)$. Thus the left hand side of \eqref{Tutte1} is at least
$(\delta - r)|T| = (\delta - r)r/\ell$. Observe that $G[U]$ has at most
$n - \delta/\ell$ components. Indeed, if $C$ is a component of $G[U]$
and $x$ is a vertex of $C$, then as $x$ can only have neighbours in
$C \cup S \cup T$ we have that $|C \cup S \cup T| \geq 1 +
\delta/\ell$ and so $U$ has at most $n - 1 - \delta/\ell$ other
components. Thus, it is enough to show that $(\delta - r)r \geq \ell
n - \delta$. For case (ii) (recall that $r$ is maximal subject to
the given conditions) we have $(\delta - r)r \geq \xi \ell n(\delta
- \xi \ell n - 2) \ge \ell n - \delta$. To see the last inequality, recall that $\delta \ge \ell n/2$ and $\xi \ell n\ge 4$ say (as we assume that $n$ is sufficiently large). So $\xi \ell n(\delta-\xi \ell n-2)\ge 4(\ell n(1/2-\xi)-2) = (2 - 4\xi) \ell n - 8 \ge \ell n$. To prove (i), note that we (always) have
\begin{equation}\label{Tutte2}
(\delta - r)r = \frac{\delta^2}{4} - \left(r - \frac{\delta}{2}
\right)^2 \geq  \frac{\delta^2}{4} - \frac{\ell n(2\delta - \ell
n)}{4} = \left(\frac{\ell n - \delta}{2}\right)^2.
\end{equation}
So the result also holds in case (i) unless $\delta \geq \ell n -
3$. But if this is the case, then $(\delta - r)r \geq \ell n -
\delta$ unless%
   \COMMENT{Enough to show that $(\delta-r)r\ge 3$. But $r\le \delta-1$ and
$r$ is even, so this holds unless $r=2$ and $\delta=3$. But then
$2=(\delta-r)r\ge \ell n-\delta=\ell n-3$ unless $\ell n\ge 6$,
in which case $\ell n=6$ as $\delta\ge \ell n/2$.}
$r=2,\delta=3$ and $\ell n = 6$. But this violates
the assumption on $r$ in~(i).

\medskip

\noindent \textbf{Case~3.} \emph{$|T| \geq \frac{r +
1}{\ell}$ and $|S| \geq \frac{\delta - r + 1}{\ell}$.}

\smallskip

Since $q(U) \leq |U| =  n - |S| - |T|$, it is enough to show that in
this case we have
\begin{equation}\label{Tutte3}
(\delta - r + 1)|T| + (r + 1)|S| - \ell |S||T| \geq n.
\end{equation}
By writing the left hand side of~\eqref{Tutte3} as
\begin{equation}\label{Tutte3a}
\frac{(\delta - r + 1)(r+1)}{\ell} - \ell \left(|T| -
\frac{r+1}{\ell} \right)\left(|S| - \frac{\delta - r + 1}{\ell}
\right),
\end{equation}
we observe that it is minimized when $|T| + |S|$ is maximal,%
   \COMMENT{Here we need that both $|T| \geq \frac{r +
1}{\ell}$ and $|S| \geq \frac{\delta - r + 1}{\ell}$ and not just one of them}
i.e.~it is equal to $n$. To prove (i), observe that the left hand side
of~\eqref{Tutte3} is at least%
   \COMMENT{Since $xy\le ((x+y)/2)^2$ for all $x,y$, we have that
$(t-a)(s-b)\le (\frac{t+s-a-b}{2})^2\le (\frac{n-a-b}{2})^2$ where $t=|T|$, $s=|S|$ etc}
\begin{align}\label{Tutte3b}
&\frac{(\delta - r + 1)(r+1)}{\ell} - \frac{\ell}{4}\left(n - \frac{\delta + 2}{\ell} \right)^2
= \frac{(\delta - r)r}{\ell} + \frac{\delta + 1}{\ell} -
\frac{\ell n^2}{4} + \frac{n (\delta + 2)}{2} - \frac{(\delta + 2)^2}{4 \ell}\nonumber \\
&\stackrel{(\ref{Tutte2})}{\geq} \frac{(\ell n - \delta)^2}{4 \ell}
+ \frac{\delta + 1}{\ell} - \frac{\ell n^2}{4} + \frac{n (\delta +
2)}{2} - \frac{(\delta + 2)^2}{4 \ell} = n.
\end{align}
To prove (ii) we may assume that $\delta < (1 - \sqrt{\xi}) \ell
n$. Indeed if $\delta \geq (1 - \sqrt{\xi}) \ell n$, then using that $r$ is maximal
subject to the given conditions we have
\[
(\delta - r)r \geq \xi \ell n(\delta - \xi \ell n - 2) \geq \xi
\ell n \left( \left( \frac{1}{2} - \xi \right)\ell n -2 \right) \ge
\frac{\xi \ell^2 n^2}{4} \geq \frac{(\ell n - \delta)^2}{4}
\]
and the result follows exactly as in case (i).

If also $|T| \leq \frac{\Delta}{\ell}$, then we
claim that $|T| - \frac{r+1}{\ell} \leq |S| - \frac{\delta - r +
1}{\ell}$. Indeed, this follows since%
   \COMMENT{Use that $r$ is maximal subject to the given conditions to see
the 1st term in the 2nd line.}
\begin{multline*}
|T| - |S| \leq \frac{2\Delta}{\ell} - n \leq \frac{2\delta}{\ell}+
2\xi^2 n - n \leq \frac{\delta}{\ell} + 2\xi^2 n - \sqrt{\xi}n \leq
\\
\frac{(2r - \delta) + 2\xi \ell n + 4}{\ell} + 2\xi^2n -
\sqrt{\xi}n = \frac{2r - \delta}{\ell} + (2\xi^2 + 2\xi -
\sqrt{\xi})n + \frac{4}{\ell} \le \frac{2r - \delta}{\ell}.
\end{multline*}
This claim together with the fact that $|T|+|S|=n$
implies that~\eqref{Tutte3a} (and thus the left hand side of~\eqref{Tutte3}) is
minimized when%
    \COMMENT{Set $x=|T|-\frac{r+1}{\ell}$ and $c=n-\frac{\delta  +
2}{\ell}$. Then $(|T| - \frac{r+1}{\ell})( |S| - \frac{\delta - r +
1}{\ell})=x(c-x)$. But the derivative of $x(c-x)$ is $c-2x=(c-x)-x$
which is at least $0$ since $x\le c-x$. So $(|T| - \frac{r+1}{\ell})( |S| - \frac{\delta - r +
1}{\ell})$ is maximized when $|T| = \Delta/\ell$.}
$|T| = \Delta/\ell$ and $|S| = n - \Delta/\ell$.
Note that $|T| \geq (1/2 + \xi)n$ in this case and so $|T|-|S|\ge 2\xi n$.
Thus the left hand side of~\eqref{Tutte3} is at least
\[
(\delta - r)|T| + (r - \ell|T|)|S| = (\delta - r)(|T| - |S|) +
(\delta - \Delta)|S| \\
\geq 2 \xi^2 \ell n^2 - \xi^2 \ell n^2 \ge n.
\]
To complete the
proof, suppose $|T| \geq \frac{\Delta}{\ell}$. Then $e(S,T) \leq
\Delta|S|$ and again $|T|-|S|\ge 2\xi n$. So the left hand side of~\eqref{Tutte1} is at least
\[
(\delta - r)|T| + (r - \Delta)|S| = (\delta - r)(|T| - |S|) +
(\delta - \Delta)|S| \geq 2 \xi^2 \ell n^2 - \xi^2 \ell n^2 \ge n.
\]

\medskip

\noindent \textbf{Case~4.} \emph{$|T| \geq \frac{r +
1}{\ell}$ or $|S| \geq \frac{\delta - r + 1}{\ell}$ but not both.}

\smallskip

As in Case~3 it suffices to show that~\eqref{Tutte3} holds. \eqref{Tutte3a}
shows that in this case the left hand side of~\eqref{Tutte3} is at least
$(\delta - r + 1)(r+1)/\ell$. So~(i) holds since~\eqref{Tutte3b} implies that
the left hand side of~\eqref{Tutte3} is at least~$n$. For~(ii), note that
$$\frac{(\delta - r + 1)(r+1)}{\ell}\ge \frac{\xi \ell n(r+1)}{\ell}
\ge \xi n(\delta-\xi \ell n-1)\ge n.$$
(Here we use the maximality of $r$ in both inequalities.)
\end{proof}

%%%%%%%%%%%%%%%%%%%%%%%%%%%%%%%%%%%%%%%%%%%%%%%%%%%%%%%%%%%%%%%%%%%%%%%%%%%%%%%%%%%%%%%%%%%%%%%%%%%%%%%%%%%%%%%%%%%%%%%%%%%%%%%%%%%%%%%%%%%%%%%%%%%%%%%%%
\section{Proofs of the Main Theorems}

In this section we will prove Theorems~\ref{Almost regular}
and~\ref{Minimum Degree}(ii) simultaneously.
Observe that in both cases we may assume that $\alpha \ll 1$.
Define additional constants such that
\[ \frac{1}{n_0} \ll \zeta \ll 1/M'\ll \eps \ll \beta \ll \eta \ll d \ll \gamma \ll \alpha \]
and let $G$ be a graph
on $n\ge n_0$ vertices with minimum degree $\delta \geq (1/2 + \alpha)n$
and maximum degree $\Delta$.

\subsection{Applying the Regularity Lemma}

We apply the Regularity Lemma to $G$ with parameters $\eps/2$, $3d/2$
and $M'$ to obtain a partition of $G$ into clusters
$V_1,\ldots,V_k$ and an exceptional set $V_0$, and a spanning subgraph $G'$ of $G$.
Let $R$ be the
multigraph on vertex set $\{V_1,\ldots,V_k\}$ obtained by adding
exactly $\ell_{ij} := \lfloor d_{G'}(V_i,V_j)/\beta \rfloor$ multiple
edges between the vertices $V_i$ and $V_j$ of $R$. By removing one vertex from
each cluster if necessary and adding all these vertices to~$V_0$,
we may assume that $m:=|V_1|=\dots=|V_k|$ is even. So now $|V_0|\leq \eps n/2+k\leq \eps n$.
The next lemma shows that $R$ inherits its minimum and maximum
degree from~$G$.

\begin{lemma}\label{Degrees of R} $ $
\begin{itemize}
\item[(i)] $\delta(R) \geq \left( \frac{\delta}{n} - 2d \right)\frac{k}{\beta}$;
\item[(ii)] $\Delta(R) \leq \left( \frac{\Delta}{n} + 2d \right)\frac{k}{\beta}$.
\end{itemize}
\end{lemma}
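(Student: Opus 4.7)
The plan is to compute $d_R(V_i) = \sum_{j\neq i} \ell_{ij}$ directly and convert back and forth between cluster densities and vertex degrees in the refined graph $G'$, using the slack afforded by the hierarchy $\eps \ll \beta \ll d$.

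For part~(i), I would fix a cluster $V_i$ and start from the crude but convenient inequality $\ell_{ij} \geq d_{G'}(V_i,V_j)/\beta - 1$, which gives
\[
d_R(V_i) \geq \frac{1}{\beta}\sum_{j\neq i} d_{G'}(V_i,V_j) \;-\; k.
\]
Next I rewrite $\sum_{j\neq i} d_{G'}(V_i,V_j) = \tfrac{1}{m^2}\sum_{v\in V_i}d_{G'}(v, V(G)\setminus(V_0\cup V_i))$, since $G'[V_i]$ is empty. The Regularity Lemma (applied with parameters $3d/2$ and $\eps/2$) guarantees $d_{G'}(v)\geq d_G(v) - (3d/2+\eps/2)n \geq \delta - (3d/2+\eps/2)n$, and since $|V_0|\leq \eps n$ each vertex loses at most $\eps n$ to the exceptional set. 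So each summand is at least $\delta - (3d/2 + 3\eps/2)n$. Using $km \leq n$ to replace $1/m$ by $k/n$, I arrive at
\[
d_R(V_i) \geq \frac{k}{\beta}\left(\frac{\delta}{n} - \frac{3d}{2} - \frac{3\eps}{2}\right) - k.
\]
The remaining step is to absorb the $-k$ term and the $\eps$ loss into the slack $d/2$, which works because $\beta, \eps \ll d$.

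For part~(ii), the mirror-image argument applies: $\ell_{ij} \leq d_{G'}(V_i,V_j)/\beta \leq d_G(V_i,V_j)/\beta$, so
\[
d_R(V_i) \leq \frac{1}{\beta m^2}\sum_{v\in V_i} d_G(v, V(G)\setminus V_i) \leq \frac{\Delta}{\beta m}.
\]
Since $m \geq n(1-\eps)/k$, the bound becomes $\Delta k/(\beta n(1-\eps))$, and the factor $1/(1-\eps)$ contributes at most $2\eps \Delta/n \leq 2d$ of excess, giving the claimed $(\Delta/n + 2d)k/\beta$.

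There is essentially no hard step here; the only mildly delicate point is that the floor in the definition of $\ell_{ij}$ costs a full $-k$ in the lower bound, but this is easily absorbed by the hierarchy because $k/[(d/2)k/\beta] = 2\beta/d \ll 1$. Nothing else requires work beyond careful bookkeeping of the Regularity Lemma's degree shift and of the relation between $m$, $k$ and $n$.
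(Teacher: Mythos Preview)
Your proposal is correct and follows essentially the same approach as the paper: both arguments bound $d_R(V_i)$ by converting $\sum_{j\neq i}\ell_{ij}$ into a sum of densities (using $\ell_{ij}=\lfloor d_{G'}(V_i,V_j)/\beta\rfloor$, equivalently $d_{G'}(V_i,V_j)\leq \beta(\ell_{ij}+1)$), then into a vertex-degree sum over $V_i$, and finally apply the Regularity Lemma's degree guarantee together with $\eps,\beta\ll d$ to absorb the error terms. The bookkeeping differs only cosmetically.
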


\begin{proof}
For any cluster $V_i$ of $R$ we have
\[
\sum_{x \in V_i}d_{G'}(x) \leq e(V_0,V_i) + \sum_{j\neq i}
e_{G'}(V_i,V_j) \leq \eps mn + \sum_{j\neq i} d_{G'}(V_i,V_j) m^2.
\]
Since $d_{G'}(V_i,V_j) \leq \beta(\ell_{ij} + 1)$, we obtain
\[
\sum_{x \in V_i}d_{G'}(x) \leq \eps mn + (d_R(V_i) + k)\beta m^2.
\]
By the definition of $G'$ in the Regularity Lemma we also have
\[
\sum_{x \in V_i}d_{G'}(x) \geq \sum_{x \in V_i}\left( d_G(x) - (3d/2 + \eps)n\right)
\geq \delta m - (3d/2 + \eps)mn.
\]
Since also $\eps,\beta \ll d$, (i) follows. Similarly,
\[
d_R(V_i) \beta m^2 \leq \sum_{j\neq i} d_{G'}(V_i,V_j)m^2 \leq \sum_{x \in
V_i}d_{G'}(x) \leq \Delta m,
\]
so (ii) follows.
\end{proof}
Since $\delta \geq (1/2 + \alpha)n$ and since between any two
vertices of $R$ there are at most $1/\beta$ edges,
Theorem~\ref{Factor Theorem}(i) implies that $R$ contains an
$r$-regular submultigraph $R'$ for every even positive integer $r$
satisfying
\[
r \leq \left(\frac{\delta}{n} -2d  +
\sqrt{\frac{2\delta}{n} - 4d - 1}   \right) \frac{k}{2\beta} =
\left( \delta - 2dn  + \sqrt{2\delta n - 4dn^2 - n^2} \right)
\frac{k}{2\beta n}.
\]
In particular, (using the inequality $\sqrt{x-y} \geq \sqrt{x} -
\sqrt{y}$ for $x\ge y > 0$ and the fact that $\alpha \gg d$) we may
assume that
\begin{equation} \label{mindegr}
r = \left(\delta + \sqrt{n(2\delta - n)} - \alpha n/2
\right) \frac{k}{2 \beta n}.
\end{equation}
Moreover, for the proof of Theorem~\ref{Almost regular}, we have
$\Delta(R) - \delta(R) \leq \left(\frac{\Delta - \delta}{n} +
4d\right) \frac{k}{\beta} \leq \alpha^2 k/4\beta$. Therefore, by taking
$\xi = \alpha/2$ in Theorem~\ref{Factor Theorem}(ii) we may even
assume that
\begin{equation} \label{regularr}
r = \left(\delta - 2\alpha n/3 \right)\frac{k}{\beta n}.
\end{equation}
So from now on, $R'$ is an $r$-regular submultigraph of $R$, where
$r$ is even and is given by~(\ref{mindegr}) for the proof of
Theorem~\ref{Minimum Degree}(ii) and given by (\ref{regularr}) for the proof of
Theorem~\ref{Almost regular}.

By Theorem~\ref{Petersen}, $R'$ can be decomposed into 2-factors. As
mentioned in the overview, it will be more convenient to work with a
matching decomposition rather than a 2-factor decomposition. If all
the cycles in all the $2$-factor-decompositions had even length then we could
decompose them into matchings. Because this might not be the case,
we will split each cluster corresponding to a vertex of $R$ into two
clusters to obtain a new multigraph $R^*$. More specifically, for
each $1 \leq i \leq k$, we split each cluster $V_i$ arbitrarily into
two pieces $V_i^1$ and $V_i^2$ of size $m/2$. $R^*$ is defined to be the
multigraph on vertex set $V_1^1,V_1^2,\ldots,V_k^1,V_k^2$ where the
number of multiedges between $V_i^a$ and $V_j^b$ ($1 \leq i,j \leq
k, 1 \leq a,b \leq 2$) is equal to the number of multiedges of $R$
between $V_i$ and $V_j$.

Recall that by Theorem~\ref{Petersen}, $R'$ can be decomposed into
2-factors. We claim that each cycle $v_1 \ldots v_t$ of each
2-factor gives rise to two edge-disjoint even cycles in $R^*$ each
of length $2t$, which themselves give rise to a total of four
matchings in $R^*$, each of size $t$. Indeed, denoting by $a_i$ and
$b_i$ the clusters in $R^*$ corresponding to $v_i$, if $t$ is even,
say $t = 2s$, then we can take the cycles $a_1a_2 \ldots a_{2s} b_1
b_2 \ldots b_{2s}$ and $a_1b_2 \ldots a_{2s-1}b_{2s}b_1a_2 \ldots
b_{2s-1}a_{2s}$. If $t$ is odd, say $t = 2s+1$, then we can take the
cycles $a_1b_2 \ldots a_{2s-1}b_{2s}a_{2s+1}b_1b_{2s+1}a_{2s} \ldots
b_3a_2$ and $a_1 a_{2s+1} a_{2s} \ldots a_2 b_1 b_2 \ldots
b_{2s+1}$ (see Figure~\ref{cycles} for the cases $t=4,5$).

\begin{figure}[h]
\includegraphics[scale=0.08]{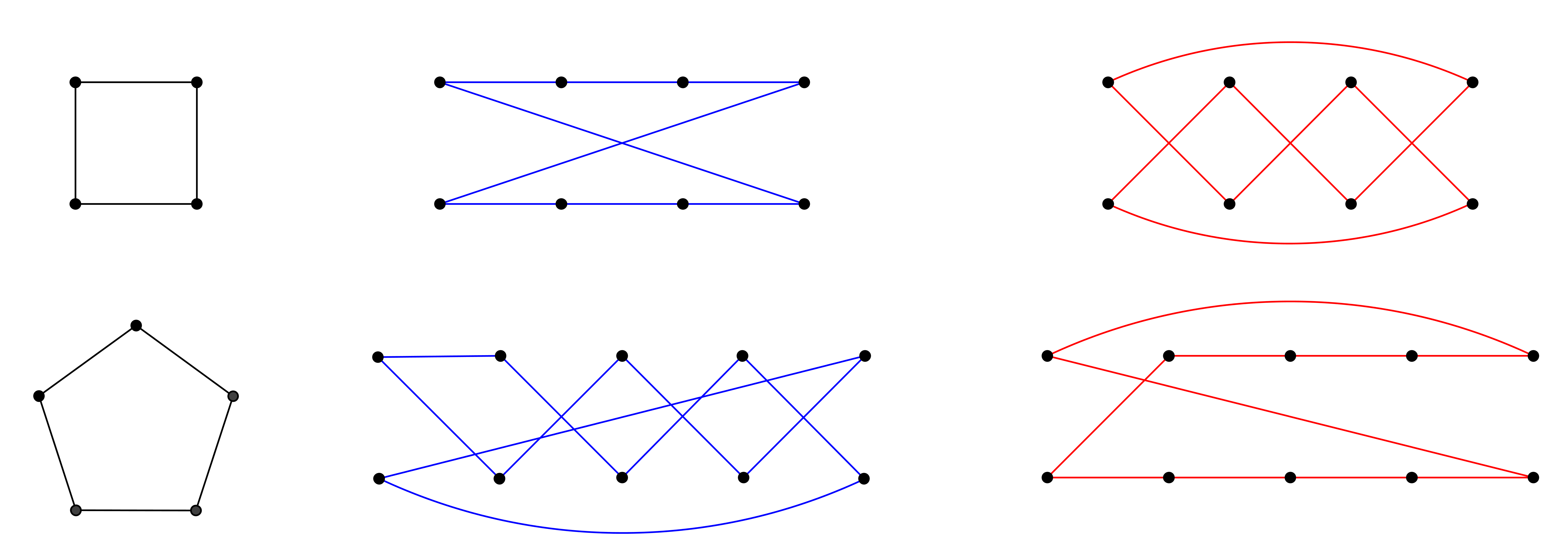}
\caption{Cycles in $R'$ and the corresponding cycles in $R^*$.}
\label{cycles}
\end{figure}

To simplify the notation we will now make the following relabelings: $R'$ has served its purpose in finding a set of edge-disjoint perfect matchings in $R^*$ and it will not be used any more, $R^*$ is relabelled to $R$ and the clusters $V_1^1,V_1^2,\ldots,V_k^1,V_k^2$ are relabelled to $V_1,\ldots,V_{k'}$. We also relabel $k'$ back to $k$. Note that now each $V_i$ has size $m' = m/2$ but we relabel $m'$ back to $m$. 

In particular we can now assume that we have a partition of the vertex set of $G$ into $k$ clusters
$V_1,\ldots,V_k$ and an exceptional set $V_0$, and a spanning subgraph $G'$ of $G$ satisfying the
following properties:

\begin{itemize}
\item $|V_0| \leq \eps n$ and $|V_1| = \cdots = |V_k| =: m$;
\item $d_{G'}(v) \geq d_G(v) - (3d/2 + \eps)n$ for every $v \in V(G)$;
\item $G'[V_i]$ is empty for every $0 \leq i \leq k$;
\item all pairs $(V_i,V_j)_{G'}$ with $1 \leq i < j \leq k$ are
$\eps$-regular with density either 0 or at least~$d$;
\item $R$ is a multigraph on vertex set $V_1,\ldots,V_k$ having
exactly $\ell_{ij} = \lfloor \frac{d_{G'}(V_i,V_j)\pm \eps}{\beta} \rfloor$ edges
$f_{ij}^1, \ldots, f_{ij}^{\ell_{ij}}$ joining $V_i$ and $V_j$;
\item $R$ has minimum degree at least $(\delta - 2dn)
\frac{k}{\beta n}$ and maximum degree at most $(\Delta + 2dn)
\frac{k}{\beta n}$;
\item $R$ contains a set of $r$ edge-disjoint perfect matchings,
where $r$ satisfies~\eqref{regularr} for
Theorem~\ref{Almost regular} and \eqref{mindegr} for
Theorem~\ref{Minimum Degree}(ii).
\end{itemize}
Later on, we will use  that in both cases we have%
    \COMMENT{Bound on $\delta$ also true for Theorem~\ref{Minimum Degree} since
$\sqrt{n(2\delta -n)}<\delta$. Indeed, need that $2\delta n-n^2\le \delta^2$,
ie $\delta^2- 2\delta n+n^2\ge 0$. The LHS is minimized when $\delta=n$, in which
case the LHS equals $0$.}
\begin{equation} \label{lowerr}
k/5\beta\le r \le k/\beta \qquad \mbox{and} \qquad \delta \ge r\beta m +\alpha n/5.
\end{equation}
We let $M_1,\ldots,M_r$ be $r$ edge-disjoint perfect matchings of $R$.
We will define edge-disjoint subgraphs $G_1,\ldots,G_r$ of $G$ corresponding
to the matchings $M_1,\ldots,M_r$. Before doing that, for each $1
\leq i < j \leq k$ we will find $\ell_{ij}$ disjoint subsets $E_{ij}^1, \ldots,
E_{ij}^{\ell_{ij}}$ of $E_{G'}(V_i,V_j)$ corresponding to the $\ell_{ij}$ edges
$f_{ij}^1, \ldots, f_{ij}^{\ell_{ij}}$ of $R$ between $V_i$ and
$V_j$. The next well known observation shows that we can choose the
$E_{ij}^{\ell}$ so that each $(V_i,V_j)_{E_{ij}^{\ell}}$ forms a
regular pair. It is e.g.~a special case of Lemma~10(i)
in~\cite{Kuhn&Osthus&Treglown}. To prove it, one considers a random partition of the
edges of $G'$ between $V_i$ and~$V_j$.

\begin{lemma}\label{Multiedges correspond to regular pairs}
For each $1 \leq i < j \leq k$, there are $\ell_{ij}$ edge-disjoint subsets $E_{ij}^1, \ldots,
E_{ij}^{\ell_{ij}}$ of $E_{G'}(V_i,V_j)$ such that each $(V_i,V_j)_{E_{ij}^{\ell}}$ is
$\eps$-regular of density either 0 or $\beta \pm \eps$.%
\COMMENT{was not true with `partition' instead of `subsets' due to rounding effects}
\end{lemma}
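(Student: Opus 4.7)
The plan is a standard random-colouring argument: colour each edge of $E_{G'}(V_i,V_j)$ independently and then verify by a Chernoff bound together with a union bound over pairs of large subsets of $V_i$ and $V_j$ that each colour class is $\eps$-regular with density $\beta\pm\eps$.

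If $\ell_{ij}=0$ the lemma is trivial, so assume $\ell_{ij}\ge 1$ and write $\rho:=d_{G'}(V_i,V_j)$, so that $\ell_{ij}\beta\le\rho$ by the definition of $\ell_{ij}$. First I would independently assign to each edge $e\in E_{G'}(V_i,V_j)$ a colour in $\{1,\ldots,\ell_{ij}\}$, each with probability $p:=\beta/\rho$, and leave $e$ uncoloured with the remaining probability $1-\ell_{ij}p\in[0,\beta/\rho)$. Let $E_{ij}^{\ell}$ be the set of edges receiving colour~$\ell$; these sets are disjoint by construction. A direct computation gives $\E\, d_{E_{ij}^{\ell}}(V_i,V_j)=\beta$ exactly, while for any $X\subseteq V_i$ and $Y\subseteq V_j$ with $|X|,|Y|\ge\eps m$, the $\eps$-regularity of $(V_i,V_j)_{G'}$ yields
\[
\E\, d_{E_{ij}^{\ell}}(X,Y)=\frac{\beta}{\rho}d_{G'}(X,Y)=\beta\pm\frac{\eps\beta}{\rho}.
\]

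Next I would apply Theorem~\ref{Chernoff} to $|E_{ij}^{\ell}\cap E_{G'}(X,Y)|$ with deviation $t:=\eps|X||Y|/3$. Since the expectation is at most $\beta|X||Y|$ and $|X||Y|\ge\eps^2 m^2$, the failure probability is at most $2\exp(-\eps^4 m^2/(27\beta))$. There are at most $\ell_{ij}\cdot 2^{2m}\le 2^{2m+1}/\beta$ triples $(X,Y,\ell)$ to consider and, because $m\gg\beta/\eps^4$ in the paper's hierarchy, a union bound gives positive probability of the event that $|d_{E_{ij}^{\ell}}(X,Y)-\E d_{E_{ij}^{\ell}}(X,Y)|\le\eps/3$ holds simultaneously for every such triple. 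Fixing any outcome in that event, $d_{E_{ij}^{\ell}}(V_i,V_j)\in[\beta-\eps/3,\beta+\eps/3]$ and the triangle inequality gives
\[
\bigl|d_{E_{ij}^{\ell}}(X,Y)-d_{E_{ij}^{\ell}}(V_i,V_j)\bigr|\le\frac{2\eps}{3}+\frac{\eps\beta}{\rho}<\eps,
\]
where the final inequality uses $\beta\ll d\le\rho$. This witnesses the conclusion.

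The analysis is routine; the only bookkeeping subtlety is that the slack $\eps\beta/\rho$ between the expected density on a large pair $(X,Y)$ and the expected density on the whole pair $(V_i,V_j)$ must be absorbed into the final $\eps$, which is automatic given the hierarchy $\eps\ll\beta\ll d\le\rho$. I do not anticipate a substantive obstacle.
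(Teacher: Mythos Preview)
Your argument is correct and is precisely the random edge-partition approach the paper itself indicates (citing Lemma~10(i) of~\cite{Kuhn&Osthus&Treglown}): colour the edges of $E_{G'}(V_i,V_j)$ independently with probability $\beta/\rho$ per colour and verify $\eps$-regularity of each colour class via Chernoff plus a union bound over all large subpairs. The only caveat is that after the cluster-splitting and relabeling one formally has $\ell_{ij}=\lfloor (d_{G'}(V_i,V_j)\pm\eps)/\beta\rfloor$, so $\ell_{ij}\beta\le\rho$ may fail by at most $\eps$; this is harmless, since taking $p=\min(\beta/\rho,1/\ell_{ij})$ still gives expected density in $[\beta-\eps,\beta]$ and the rest of your analysis goes through unchanged.
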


Given a matching $M_i$, we define the graph $G_i$ on vertex set
$V(G)$ as follows: Initially, the edge set of $G_i$ is the union of
the sets $E_{ab}^{\ell}$, taken over all edges $f_{ab}^{\ell}$
of $M_i$. So at the moment, $G_i$ is a disjoint union of $V_0$ and
$k':=k/2$ pairs which are $\eps$-regular and have density $\beta
\pm \eps$. For every such pair, by removing exactly $2\eps m$
vertices from each cluster of the pair, we may assume that the pair
is $2\eps$-regular and that every vertex remaining in each
cluster has degree $(\beta \pm 4\eps) m$ within the pair. (In
particular, it is $(2\eps,\beta-4\eps)$-super-regular.) We denote
by $V_{0i}$ the union of $V_0$, together with the set of all these
removed vertices. Observe that
\begin{equation} \label{v0i}
|V_{0i}| \leq \eps n + 2\eps mk
\leq 3\eps n.
\end{equation}
Finally, we remove all edges incident to vertices of
$V_{0i}$. We will denote the pairs of clusters of $G_i$ corresponding to the
edges of~$M_i$ by $(U_{1,i},V_{1,i}), \ldots , (U_{k',i},V_{k',i})$ and call
them the \emph{pairs of clusters of~$G_i$}. Observe that
every cluster $V$ of $G_i$ is contained in a unique cluster of
$R$, which we will denote by $V^R$, and each cluster $V$ of $R$ contains a unique cluster of $G_i$, which we will denote by $V(i)$. In particular we have that $|V \setminus V(i)| \leq 2\eps m$.\COMMENT{So $V^R(i) = V$ for every $V$ in $G_i$ and $V(i)^R =
V$ for every $V$ in $R$ and every $1 \leq i \leq r$.}

So we have exactly $r$ edge-disjoint spanning subgraphs $G_i$ of $G$
such that for each $1 \leq i \leq r$ the following hold:
\begin{itemize}
\item [($a_1$)] $G_i$ is a disjoint union of a
set $V_{0i}$ of size at most $3 \eps n$ together with $k$ clusters
$U_{1,i},V_{1,i}, \ldots , U_{k',i},V_{k',i}$ each of size exactly
$(1 - 2\eps) m$;
\item [($a_2$)] For each $x \in V(G)$ the
degree of $x$ in $G_i$ is either~0 if $x \in V_{0i}$ or $(\beta \pm
4 \eps) m$ otherwise;
\item [($a_3$)] For each $1 \leq j \leq k'$
the pair $(U_{j,i},V_{j,i})$ is $(2\eps,\beta -4\eps)$-super-regular;
\item [($a_4$)] Every edge of $G_i$ lies in one of the pairs
$(U_{j,i},V_{j,i})$ for some $1 \leq j \leq k'$.
\end{itemize}

\subsection{Extracting random subgraphs}

At the moment, no $G_i$ contains a Hamilton cycle. Our aim is to add
some of the edges of $G$ which do not belong to any of the $G_i$
into the $G_i$ in such a way that the graphs obtained from the $G_i$
are still edge-disjoint and
each of them contains almost $\beta m/2$ edge-disjoint Hamilton cycles.
To achieve this it will be convenient however to remove some of the
edges of each $G_i$ first while still keeping most of its properties.

We will show that there are edge-disjoint subgraphs $H_1,H_2$ and $H_3$
of $G$ satisfying the following properties:

\begin{lemma}\label{Properties after definition of H's}
There are edge-disjoint subgraphs $H_1,H_2$ and $H_3$ of $G$ such that the
following properties hold:
\begin{itemize}
\item[(i)] For every vertex $x$ of $G$ and every $1 \leq
j \leq 3$ we have $|d_{H_j}(x) - \gamma d_G(x)| \leq \zeta n$.
\item[(ii)] For every vertex $x$ of $G$, every $1 \leq i \leq r$ and every $1 \leq
j \leq 3$
\[
\left|d_{H_j\cap G_i}(x) - \gamma d_{G_i}(x) \right| \leq \zeta
n.\COMMENT{At the moment, we have not yet removed the edges of
$H_j$'s from the $G_i$'s.}
\]
\item[(iii)] For every vertex $x$ of $G$, every $1 \leq i \leq r$ and every $1 \leq
j \leq 3$
\[
\left| \left| N_{H_j}(x) \cap V_{0i} \right| - \gamma \left|
N_{G}(x) \cap V_{0i} \right| \right| \leq \zeta n.
\]
\item[(iv)] For every vertex $x$ of $G$, every $1 \leq i \leq r$,
every $1 \leq t \leq k$ and every $1 \leq j \leq 3$
\[
\left| \left| N_{H_j\cap G_i}(x) \cap V_t \right| - \gamma \left| N_{G_i}(x)
\cap V_t \right| \right| \leq \zeta n.
\]
\item[(v)] For every vertex $x$ of $G$, every $1 \leq t \leq k$ and
every $1 \leq j \leq 3$
\[
\left| \left| N_{H_j}(x) \cap V_t \right| - \gamma \left| N_{G}(x)
\cap V_t \right| \right| \leq \zeta n.
\]
\item[(vi)] For every $1 \leq i \leq r$, every pair of clusters
$(U,V)$ of $G_i$, every $A \subseteq U$ and every $B \subseteq V$ with $|A|,|B|\ge 2\eps |U|$
and every $1 \leq j \leq 3$ we have
\[
\left| \left|E_{H_j \cap G_i}(A,B) \right|  - \gamma \left|
E_{G_i}(A,B) \right| \right| \leq \zeta n^2.
\]
\item[(vii)] For all clusters
$U\neq V$ of $R$, every $A \subseteq U$and every $B \subseteq V$ with $|A|,|B|\ge \eps m$
and every $1 \leq j \leq 3$ we have
\[
\left| \left|E_{H_j \cap G'}(A,B) \right|  - \gamma \left|
E_{G'}(A,B) \right| \right| \leq \zeta n^2.
\]
\end{itemize}
\end{lemma}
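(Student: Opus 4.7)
The natural approach is the probabilistic method via independent edge sampling. I would assign each edge of $G$ independently and at random to one of four classes: ``belongs to $H_j$'' (for each $j\in\{1,2,3\}$) with probability $\gamma$, and ``belongs to none'' with the remaining probability $1-3\gamma$. By construction $H_1,H_2,H_3$ are edge-disjoint subgraphs of~$G$. Crucially, the partition $V_0,\ldots,V_k$, the multigraph $R$, the matchings $M_1,\ldots,M_r$, the decomposition $G_1,\ldots,G_r$, and the exceptional sets $V_{0i}$ have all been constructed deterministically from $G$ and from the output of the Regularity Lemma \emph{before} the random sampling. Hence each quantity appearing on the left-hand side of (i)-(vii) is a sum of independent Bernoulli$(\gamma)$ variables, one per edge in an appropriate subgraph of~$G$.

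With this setup, each of (i)-(vii) becomes a concentration statement for which I would apply Theorem~\ref{Chernoff}. The conditions (i)-(v) each assert that some sum with expectation at most $\gamma n$ lies within $\zeta n$ of its mean, so Chernoff with $t=\zeta n$ gives a per-event failure probability of at most $2\exp(-\zeta^2 n/(3\gamma))$. Since the number of events in (i)-(v) is polynomial in $n$ (as $r,k=O(1)$ from the Regularity Lemma) and $\zeta\ll\gamma$, a union bound over all these events succeeds comfortably. Conditions (vi) and (vii) are analogous, but now the relevant sum has expectation at most $\gamma n^2$ and the desired deviation is $\zeta n^2$, giving per-event failure probability at most $2\exp(-\zeta^2 n^2/(3\gamma))$.

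The only point requiring a modicum of care is the union bound in (vi) (and similarly (vii)), because we must quantify over all subsets $A\subseteq U$, $B\subseteq V$. This gives at most $4^m\cdot O(1)\le 4^n$ events per pair of clusters, and $O(1)\cdot 4^n$ events in total. However, since the Chernoff exponent $\zeta^2 n^2/(3\gamma)$ is of order $n^2$ while $\log(4^n)=O(n)$, the Chernoff tail dominates this exponentially large union bound once $n$ is sufficiently large. Combining all the union bounds across (i)-(vii) shows that with probability $1-o(1)$ the random choice produces $H_1,H_2,H_3$ satisfying every condition simultaneously, so in particular such subgraphs exist.
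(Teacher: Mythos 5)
Your proposal is correct and takes essentially the same approach as the paper: sample each edge of $G$ into $H_1,H_2,H_3$ independently with probability $\gamma$ each (none with probability $1-3\gamma$), then apply Chernoff plus a union bound, noting that $r,k=O(1)$ so there are polynomially many events in (i)--(v) and the $4^m$-term union bound in (vi)--(vii) is dominated by the order-$n^2$ Chernoff exponent.
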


\begin{proof}
We construct the $H_j$'s randomly as follows: For every edge $e$ of
$G$, with probability $3\gamma$, we assign it uniformly to one of
the $H_j$'s and with probability $1 - 3\gamma$ to none of them.
By Theorem~\ref{Chernoff}, all properties hold with high
probability. More specifically, the total probability of failure is
at most
\[
(6n + 6rn +6rn + 6rkn+6kn) \exp{\left(- \frac{\zeta^2 n}{3\gamma} \right)} +
(3rk4^m+3k^2 4^m) \exp{\left(- \frac{\zeta^2 n^2}{3\gamma} \right)} \ll 1.
\qedhere
\]
\end{proof}
We pick subgraphs $H_1,H_2$ and $H_3$ of $G$ as given by Lemma~\ref{Properties after
definition of H's}.
It will be convenient for later use to split (a subgraph of) $H_3$ into $r$
subgraphs called $H_{3,1}, \ldots , H_{3,r}$ satisfying the
properties of the following lemma. For each $i$, we will add
edges of $H_{3,i}$ to $G_i$ (but not to any of the other $G_j$)
during the final part of our proof (see Section~\ref{sec:finalpf}).
Roughly speaking, if $(U,V)$ is an edge of $R$, then we require
$H_{3,i}$ to contain some edges between $U$ and $V$ (but we
do not need many of these edges).
If $(U,V)$ corresponds to a matching edge of $M_i$, then we also
require the corresponding subgraph of $H_{3,i}$ to be reasonably dense.
Moreover, each edge of $H_{3,i}$ will correspond to some edge of~$R$.

\begin{lemma}\label{Distribution of edges of H_3 within the G_i's}
There are edge-disjoint subgraphs $H_{3,1}, \ldots, H_{3,r}$ of $H_3$
so that the following hold:
\begin{itemize}
\item[(i)] For every $1 \leq i \leq r$, all clusters
$U\neq V$ of $G_i$ such that $U^R$ and $V^R$ are adjacent in $R$ and
every $U' \subseteq U$ and $V' \subseteq V$ with $|U'|,|V'| \geq
\eps m$ there are at least $\frac{\gamma \beta \eps^2 d m^2}{5k}$
edges between $U'$ and $V'$ in $H_{3,i}$;
\item[(ii)] For every $1 \leq i \leq r$ and every $1 \leq j \leq k'$, the
pair $(U_{j,i},V_{j,i})_{H_{3,i}}$ is $(5\eps/2,\gamma \beta /5)$-super-regular;
\item[(iii)] For every $1 \leq i \leq r$, $H_{3,i}$ has maximum degree
at most $\beta m$;\COMMENT{this is needed in the proof of the decomposition lemma}
\item[(iv)] For every $1 \leq i \leq r$ and every edge $e$ of $H_{3,i}$ there
are clusters $U\neq V$ of $G_i$ such that such that $U^R$ and $V^R$ are adjacent in $R$
and $e$ joins $U$ to~$V$.
\end{itemize}
\end{lemma}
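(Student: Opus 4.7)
My plan is to construct $H_{3,1},\dots,H_{3,r}$ via an independent random assignment of the edges of $H_3$, with two different inclusion probabilities tailored to the two distinct roles these edges play. I would restrict attention to edges $e\in H_3$ lying between two $R$-adjacent clusters $V_s,V_t$, so that~(iv) holds automatically. For each index $i$, call $e$ \emph{internal at $i$} if $e\in G_i$ (equivalently, $V_sV_t\in M_i$ and $e$ lies in the corresponding pair $(U_{j,i},V_{j,i})$), and \emph{external at $i$} otherwise. Since the $G_i$'s are edge-disjoint, each $e$ is internal at at most one index $i_0=i_0(e)$. Set target inclusion probabilities $q_{i_0}(e)=1/5$ and $q_i(e)=\beta/(5k)$ for $i\neq i_0$, noting $\sum_i q_i(e)\le 1/5+r\beta/(5k)\le 2/5$ by~\eqref{lowerr}. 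To keep the $H_{3,i}$'s edge-disjoint, I would flip a single biased coin for $e$ with success probability $p(e)=\sum_i q_i(e)$ and, on a success, assign $e$ to $H_{3,i}$ with probability $q_i(e)/p(e)$. Finally, delete any edge of any $H_{3,i}$ that is incident to $V_{0i}$; since $|V_{0i}|\le 3\eps n$ this changes all of the edge and degree counts only negligibly.

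The choice $q_{i_0}(e)=1/5$ is dictated by~(ii): by Lemma~\ref{Properties after definition of H's}(iv) and~(vi), $(U_{j,i},V_{j,i})_{H_3\cap G_i}$ is $(2\eps,\gamma(\beta-4\eps))$-super-regular of density $\approx\gamma\beta$, so retaining each of its edges with probability $1/5$ produces a $(5\eps/2,\gamma\beta/5)$-super-regular pair; the additional contribution from the external edges retained in the same pair only adds $O(\gamma\beta^2/k)$ to the density, which is absorbed into the regularity slack. The rate $\beta/(5k)$ on external edges is essentially the largest compatible with~(iii): for a fixed $x\in U_{j,i}$, the expected contribution to $d_{H_{3,i}}(x)$ is at most $(1/5)\gamma(\beta+4\eps)m$ from internal edges plus $(\beta/(5k))\gamma d_G(x)\le \gamma\beta m/5+o(\beta m)$ from external ones, for a total of $(2/5+o(1))\gamma\beta m\ll\beta m$ since $\gamma\ll 1$. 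For~(i), the case $U^RV^R\in M_i$ follows from~(ii), and the case $U^RV^R\notin M_i$ follows because the $\eps$-regularity of $(V_s,V_t)_{G'}$ together with Lemma~\ref{Properties after definition of H's}(vii) gives $|E_{H_3\cap G'}(U',V')|\ge \gamma(d-\eps)\eps^2 m^2$, which when scaled by $\beta/(5k)$ yields the required $\gamma\beta d\eps^2 m^2/(5k)$ in expectation (up to a factor arbitrarily close to $1$, which can be absorbed by slightly enlarging the rate).

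All the expected bounds above are deviations of sums of independent Bernoulli random variables, so Theorem~\ref{Chernoff} gives failure probability $\exp(-\Omega(\zeta^2 n^2))$ for each edge-count event and $\exp(-\Omega(\zeta^2 n))$ for each degree event. A union bound over the $r\le k$ indices, the at most $k^2$ pairs of clusters, the at most $4^m$ choices of subsets $(U',V')$, and the $n$ vertices is comfortably absorbed by the hierarchy $1/n_0\ll\zeta\ll 1/M'\ll\eps\ll\beta$ (together with $k\le M=O(1)$), so with positive probability all required properties hold simultaneously.

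The main obstacle is reconciling~(ii) and~(iii): since $r$ is of order $k/\beta$, the degree each vertex can spare for a typical $H_{3,i}$ is only about $\gamma d_G(x)/r\approx \gamma\beta m/k$, which is much too small to sustain density $\Theta(\gamma\beta)$ on the $M_i$-pair. Consequently a single uniform inclusion rate cannot meet both bounds; the edges must be split into the internal/external classes above and calibrated at two separate rates. A subtle point is that the $1/5$-rate has to be reserved for edges actually lying in $G_i$ rather than for all $H_3$-edges between $V_s$ and $V_t$: otherwise, the multiplicity $\ell_{st}\le\lfloor 1/\beta\rfloor$ of $R$-edges between $V_s,V_t$ would inflate each $M_i$-pair's degree contribution to~(iii) by a factor of up to $1/\beta$, breaking the cap. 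Once this bookkeeping is in place, the Chernoff computations themselves are routine.
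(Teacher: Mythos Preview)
Your approach is essentially the paper's: it too assigns each edge of $H_3\cap G'$ randomly to a single $H_{3,i}$, using a high rate for the unique $i$ with $e\in G_i$ and a uniform low rate for all other indices (concretely, the paper takes rates $1/2$ and $1/(2(r-1))\approx\beta/(2k)$ in place of your $1/5$ and $\beta/(5k)$), then deletes edges violating~(iv) and verifies~(i)--(iii) by Chernoff.

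Two small calibration issues in your write-up: first, with internal rate exactly $1/5$ the expected minimum degree in $(U_{j,i},V_{j,i})_{H_{3,i}}$ is only $(1/5)\gamma(\beta-4\eps)(1-2\eps)m$, which falls short of the target $(\gamma\beta/5)(1-2\eps)m$ by a $(1-O(\eps/\beta))$ factor, and similarly your external rate $\beta/(5k)$ leaves~(i) a constant factor short; taking rates $1/4$ and $\beta/(4k)$ (or the paper's $1/2$ and $1/(2(r-1))$) gives the needed slack. Second, your union bound uses ``$r\le k$'', but in fact $r=\Theta(k/\beta)\gg k$ by~\eqref{lowerr}; this is harmless since $r$ is still $O(1)$ in $n$, but the inequality as written is false.
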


\begin{proof}
Recall that given any two adjacent vertices $V_a,V_b$ of $R$, and
any $1 \leq \ell \leq \ell_{ab}$, there is at most one $M_i$ which
contains the edge $f_{ab}^{\ell}$. If there is no such $M_i$, then
we assign the edges of $E_{ab}^{\ell} \cap E(H_3)$ to the
$H_{3,j}$ uniformly and independently at random. If there is such
an $M_i$, we assign every edge of $E_{ab}^{\ell}\cap E(H_3)$ to $H_{3,i}$ with
probability $1/2$ or to one of the other $H_{3,j}$'s uniformly at
random. Note that this means that every edge of $H_3$ between $V_a$ and $V_b$ which lies
in some $G_i$ is assigned to $H_{3,i}$ with probability $1/2$ and assigned to
some other $H_{3,j}$ with probability $1/2(r-1)$.

To prove (i), observe that since $r \leq k/\beta$ by~(\ref{lowerr}), every edge of
$H_3$ with endpoints in $U$ and $V$ has probability at least
$\beta/2k$ of being assigned to $H_{3,i}$. Since $(U^R,V^R)_{G'}$ is
$\eps$-regular of density at least $d$, there are at least
$\eps^2 dm^2$ edges between $U'$ and $V'$ in $G'$ and so by
Lemma~\ref{Properties after definition of H's}(vii), $H_3$
contains at least $\gamma \eps^2 d m^2/2$ such edges. So by
Theorem~\ref{Chernoff}, (i) holds with high probability.

To prove (ii), recall that before defining $H_3$, the pair
$(U_{j,i},V_{j,i})_{G_i}$ was $(2\eps,\beta-4\eps)$-super-regular by~$(a_3)$.
Thus by Lemma~\ref{Properties after
definition of H's}(iv) and~(vi), $(U_{j,i},V_{j,i})_{H_3\cap G_i}$ is $(2\eps, \gamma
\beta /2)$-super-regular. Since every edge of
$(U_{j,i},V_{j,i})_{H_3\cap G_i}$ has probability exactly 1/2 of being
assigned to $H_{3,i}$, another application of Theorem~\ref{Chernoff}
shows that with high probability $(U_{j,i},V_{j,i})_{H_{3,i}\cap G_i}$ is $(2\eps, \gamma
\beta /5)$-super-regular. On the other hand, for every edge $e$ in $E(H_3)\setminus E(G_i)$ between
$U_{j,i}$ and $V_{j,i}$ the probability that $e$ is assigned to $H_{3,i}$
is at most $1/r\le 5\beta/k\ll \eps$ (the first inequality follows from~(\ref{lowerr})).
Together with Theorem~\ref{Chernoff} this
implies that with high probability $(U_{j,i},V_{j,i})_{H_{3,i}}$ consists of
$(U_{j,i},V_{j,i})_{H_{3,i}\cap G_i}$ and at most $\eps^3 m^2$ additional edges.
Thus with high probability $(U_{j,i},V_{j,i})_{H_{3,i}}$ is $(5\eps/2, \gamma
\beta /5)$-super-regular, i.e.~(ii) holds with high probability.

To prove (iii), observe that by ($a_2$) and Lemma~\ref{Properties after
definition of H's}(ii) $(U_{j,i},V_{j,i})_{H_3\cap G_i}$ (and thus
also $H_{3,i}\cap G_i$) has maximum degree at most $2\gamma \beta m$.
Moreover, every
edge in $E(H_3)\setminus E(G_i)$ has probability at most $1/r\le 5\beta/k$ of being assigned
to $H_{3,i}$. Since by Lemma~\ref{Properties after
definition of H's}(i) $H_3$ has maximum degree at most $2\gamma n$, this implies that
$H_{3,i}-E(G_i)$ has maximum degree at most $10\gamma\beta n/k$. Thus~(iii)
follows from Theorem~\ref{Chernoff} with room to spare.

In order to satisfy~(iv) we delete all the edges of $H_{3,i}$ which
do not `correspond' to an edge of~$R$.
\end{proof}
We choose $H_{3,1},\dots, H_{3,r}$ as in
Lemma~\ref{Distribution of edges of H_3 within the G_i's}. We now redefine
each $G_i$ by removing from it every
edge which belongs to one of the $H_j$'s. Observe that each $G_i$
still satisfies $(a_1)$ and $(a_4)$ and it also satisfies

\begin{itemize}
\item [($a_2'$)] For each $x \in V(G)$ the
degree of $x$ in $G_i$ is either~0 if $x \in V_{0i}$ or $\beta(1 \pm
4\gamma) m$ otherwise;
\item [($a_3'$)] For each $1 \leq j \leq k'$
the pair $(U_{j,i},V_{j,i})$ is $(2\eps, \beta(1 -
4\gamma))$-super-regular,
\end{itemize}
instead of $(a_2)$ and $(a_3)$ respectively. Indeed, $(a_2')$
follows from $(a_2)$ and Lemma~\ref{Properties after definition of
H's}(ii) while $(a_3')$ follows from $(a_3)$ and
Lemma~\ref{Properties after definition of H's}(iv),(vi). Moreover,
since we have removed the edges of $H_1,H_2$ and $H_3$ from the
$G_i$'s we have
\begin{itemize}
\item [($a_5$)] $G_1, \ldots , G_r,H_1,H_2,H_3$ are edge-disjoint.
\end{itemize}

\subsection{Adding edges between $V_{0i}$ and $G_i \setminus V_{0i}$}

Our aim in this subsection is to add edges from $G \setminus (G_1
\cup \cdots \cup G_r \cup H_2 \cup H_3)$ into the $G_i$'s so that
for each $1 \leq i \leq r$ we have the following new properties:

\begin{itemize}
\item[($a_{2.1}$)] For each $x \in V(G)$, we have $d_{G_i}(x)
= (1 \pm 5\gamma)\beta m$;
\item[($a_{2.2}$)] For each $x \in G_i \setminus
V_{0i}$, we have $d_{V_{0i}}(x) \leq \sqrt{\eps}\beta m$,
\end{itemize}
instead of $(a_2')$. We will also guarantee that no edge will be
added to more than one of the $G_i$'s. In particular, instead of
$(a_5)$ we will now have

\begin{itemize}
\item [($a_5'$)] $G_1, \ldots , G_r,H_2,H_3$ are edge-disjoint.
\end{itemize}
Moreover, all edges added to $G_i$ will have one endpoint in
$V_{0i}$ and the other endpoint in $G_i \setminus V_{0i}$. In
particular $(a_1)$ and $(a_3')$ will still be satisfied while
instead of $(a_4)$ we will have

\begin{itemize}
\item [($a_4'$)] Every edge of $G_i$ lies either in a pair of the form
$(V_{0i},U)$ where $U$ is a cluster of $G_i$ (i.e. $U=U_{i,j}$ or $U=V_{i,j}$ for some $1\leq j\leq k¢$) or in a pair of the form
$(U_{j,i},V_{j,i})$ for some $1 \leq j \leq k'$.
\end{itemize}

We add the edges as follows: Firstly, for each vertex $x$ of $G$, we let
$L_x = \{i : x \in V_{0i} \}$. The distribution of the new edges
incident to $x$ will depend on the size of $L_x$. Let us write
$\ell_x = |L_x|$ and let $A = \{x : \ell_x \leq \gamma n/4 \beta
m\}$ and $B = V(G) \setminus A = \{x : \ell_x > \gamma n/4 \beta
m\}$.

We begin by considering the edges of $H_1$ incident to vertices of
$A$. For every such edge $xy$, we choose one of its endpoints
uniformly and independently at random. If the chosen endpoint, say
$x$, does not belong to $A$, then we do nothing. If it does belong
to $A$ then we will assign $xy$ to at most one of the $G_i$'s
for which $i \in L_x$. For each $i \in L_x$, we assign $xy$
to $G_i$ with probability $2\beta m/d_{H_1}(x)$. So the
probability that $xy$ is not assigned to any~$G_i$ is $1 -
\frac{2 \ell_x \beta m}{d_{H_1}(x)}$. (Moreover, this assignment is
independent of any previous random choices.) Observe that since
$\delta(G) \geq (1/2 + \alpha)n$,
Lemma~\ref{Properties after definition of H's}(i) implies that $\frac{2
\ell_x \beta m}{d_{H_1}(x)} \leq \frac{\gamma n}{2 d_{H_1}(x)} \leq
1$, so this distribution is well defined. Finally, we remove all
edges that lie within some $V_{0i}$, so that each $G_i[V_{0i}]$
becomes empty.

\begin{lemma}\label{Properties after adding edges from H_1 to the G_i's}
With  probability at least $2/3$ the following properties hold:
\begin{itemize}
\item[(i)] For every $i$ and every $x \in V_{0i} \cap A$
we have $|d_{G_i}(x) - \beta m| \leq 8 \eps \beta m$;
\item[(ii)] For every $i$ and every $x \in G_i \setminus V_{0i}$ we
have $|N_{G_i}(x) \cap (V_{0i} \cap A)| \leq 9 \eps \beta m$.
\end{itemize}
\end{lemma}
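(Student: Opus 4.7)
The plan is to combine a first-moment computation with Theorem~\ref{Chernoff} and a union bound, exploiting the fact that after fixing $H_1$ the assignments of its edges to the various $G_i$ are performed independently across edges. Consequently, for fixed $i$ and $x$, both $d_{G_i}(x)$ (in~(i)) and $|N_{G_i}(x)\cap(V_{0i}\cap A)|$ (in~(ii)) are sums of independent Bernoulli indicators whose expectations are easy to compute.

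For (i), let $x\in V_{0i}\cap A$; since $x$ was isolated in $G_i$ before this step, $d_{G_i}(x)$ counts only edges $xy\in H_1$ that are assigned to $G_i$ and survive the final removal of edges inside $V_{0i}$. The latter forces $y\notin V_{0i}$, which in turn means $i\notin L_y$, so only $x$'s coin flip can assign $xy$ to $G_i$, with success probability $(1/2)\cdot(2\beta m/d_{H_1}(x))=\beta m/d_{H_1}(x)$. Hence
\[
\E[d_{G_i}(x)]=\beta m\cdot\Bigl(1-\tfrac{|N_{H_1}(x)\cap V_{0i}|}{d_{H_1}(x)}\Bigr).
\]
Lemma~\ref{Properties after definition of H's}(i) together with $\delta\ge(1/2+\alpha)n$ gives $d_{H_1}(x)\ge\gamma n/2$, while Lemma~\ref{Properties after definition of H's}(iii) together with $|V_{0i}|\le 3\eps n$ and $\zeta\ll\gamma\eps$ gives $|N_{H_1}(x)\cap V_{0i}|\le 4\gamma\eps n$; so the ratio above is at most $8\eps$ and $|\E[d_{G_i}(x)]-\beta m|\le 8\eps\beta m$. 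For (ii), $x\in V(G_i)\setminus V_{0i}$ satisfies $i\notin L_x$, so only $y$'s coin flip can assign $xy$ to $G_i$; as the corresponding probability is $\beta m/d_{H_1}(y)\le 2\beta m/(\gamma n)$ and there are at most $|N_{H_1}(x)\cap V_{0i}|\le 4\gamma\eps n$ candidate endpoints $y$, this yields $\E[|N_{G_i}(x)\cap(V_{0i}\cap A)|]\le 8\eps\beta m$.

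Since each relevant random variable has expectation at most $\beta m=\Omega(n)$ and is a sum of independent $0/1$ indicators, applying Theorem~\ref{Chernoff} with deviation $\eps\beta m/2$ gives per-event failure probability $\exp(-\Omega(\eps^2 n))$, which easily survives a union bound over the at most $2rn\le n^2$ events obtained by ranging over $i$ and $x$. The bounds $8\eps\beta m$ in (i) and $9\eps\beta m$ in (ii) then hold simultaneously with probability well above $2/3$.

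The key technical point is keeping the expected error at $O(\eps)\beta m$: the naive bound $|N_{H_1}(x)\cap V_{0i}|\le|V_{0i}|\le 3\eps n$ is too weak by a factor of $1/\gamma$, since $d_{H_1}(x)$ is itself only a $\gamma$-fraction of $d_G(x)$. The sharper estimate $|N_{H_1}(x)\cap V_{0i}|\le\gamma|N_G(x)\cap V_{0i}|+\zeta n$ coming from Lemma~\ref{Properties after definition of H's}(iii), which uses the random construction of $H_1$ in an essential way, saves exactly the factor of $\gamma$ needed, after which the rest of the argument is routine.
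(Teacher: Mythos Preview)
Your proof is correct and follows essentially the same approach as the paper's: compute the expectation via the key estimate $|N_{H_1}(x)\cap V_{0i}|\le \gamma|N_G(x)\cap V_{0i}|+\zeta n$ from Lemma~\ref{Properties after definition of H's}(iii), then apply Chernoff and a union bound over the $O(rn)$ events. The only slip is arithmetic: in~(i) your expectation bound $|\E[d_{G_i}(x)]-\beta m|\le 8\eps\beta m$ plus a Chernoff deviation of $\eps\beta m/2$ yields $8.5\eps\beta m$, just over the target; the paper obtains $\E[d_{G_i}(x)]\ge(1-7\eps)\beta m$ by keeping the sharper numerator $\gamma d_{V_{0i}}(x)+\zeta n\le 3\gamma\eps n+\zeta n$ rather than rounding to $4\gamma\eps n$, which leaves room for a Chernoff deviation of $\eps\beta m$.
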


\begin{proof}
The results will follow by applications of Theorem~\ref{Chernoff}.
\begin{itemize}
\item[(i)] For every $x \in V_{0i} \cap A$ and every edge $xy$ of
$H_1$ with $y \notin V_{0i}$, the probability that $xy$ is assigned to
$G_i$ is exactly $\beta m/d_{H_1}(x)$. Indeed, with probability
$1/2$, the endpoint $x$ of $xy$ is chosen and then independently
with probability $2\beta m/d_{H_1}(x)$ we assign $xy$ to $G_i$. Observe
that since $y \notin V_{0i}$, if the endpoint $y$ of $xy$ was
chosen, then $xy$ cannot be assigned to $G_i$. Thus, the expected
size of $d_{G_i}(x)$ is $\beta m \frac{d_{H_1 \setminus
V_{0i}}(x)}{d_{H_1}(x)}$, which by
Lemma~\ref{Properties after definition of H's}(i),(iii) is at most $\beta m$
and at least
$$
\beta m \left(1 - \frac{\gamma d_{V_{0i}}(x) + \zeta
n}{\gamma d_{G}(x) - \zeta n} \right) \stackrel{(\ref{v0i})}{\geq} (1 - 7\eps)\beta m.
$$
Thus by Theorem~\ref{Chernoff}, the probability that the required property
fails is at most $2rn \exp{\left(-\frac{\eps^2\beta^2 m^2}{3 \beta m}
\right)} \le 1/6$.

\item[(ii)] By Lemma~\ref{Properties after definition of H's}(iii) and~\eqref{v0i},
we have that $|N_{H_1}(x) \cap (V_{0i} \cap A)| \leq \gamma|V_{0i}|
+ \zeta n \leq 4\gamma \eps n$. By
Lemma~\ref{Properties after definition of H's}(i), every edge $xy$ of
$H_1$ with $y \in V_{0i} \cap A$ has probability at most $\beta
m/d_{H_1}(y) \leq 2\beta m/\gamma n$ of appearing in $G_i$. Since
all such events are independent, by Theorem~\ref{Chernoff} the
probability that (ii) fails is at most $2rn \exp{\left(-\frac{
\eps^2\beta^2 m^2}{24 \eps \beta m} \right)} \le 1/6$. \qedhere
\end{itemize}
\end{proof}
We now consider the edges of $H_1$ incident to vertices of $B$.
Observe that on the one hand we have $\sum |V_{0i}| \geq |B|
\frac{\gamma n}{4 \beta m}$. On the other hand, (\ref{lowerr}) and~(\ref{v0i}) imply that
$\sum |V_{0i}| \leq  \frac{3\eps n k}{\beta}$. Thus $|B| \leq
12\eps n/\gamma$.

For each $x \in B$, let $E(x)$ be the set of all edges of the form
$xy$ of $G$ such that $xy$ does not belong to any of the $G_i$'s or
any of the $H_j$'s and moreover $y \notin B \cup V_0$. By
definition we have that all the $E(x)$ are disjoint. Moreover,
using $(a'_2)$ and Lemma~\ref{Properties after definition of H's}(i)
\[
|E(x)| \geq \delta - (r - \ell_x)(1+4\gamma)\beta m-3(\gamma + \zeta)n -
\frac{12\eps}{\gamma}n - \eps n \stackrel{(\ref{lowerr})}{\geq} \ell_x \beta m.
\]
For each $x \in B$, we pick a subset $E'(x)$ of $E(x)$ of size exactly
$\ell_x \beta m$. We now assign each edge in $E'(x)$ uniformly at
random to the $\ell_x$ $G_i$'s with $i \in L_x$. Again, we then remove
from $G_i$ any edge that lies within $V_{0i}$, so that $G_i[V_{0i}]$
is still empty.

\begin{lemma}\label{Properties after adding edges from all E(x) into
the G_i's} With probability at least $2/3$ the following properties hold:
\begin{itemize}
\item[(i)] For every $i$ and every $x \in V_{0i} \cap B$
we have $|d_{G_i}(x) - \beta m| \leq \sqrt{\eps}\beta m$;
\item[(ii)] For every $i$ and every $x \in G_i \setminus V_{0i}$ we
have $|N_{G_i}(x) \cap (V_{0i} \cap B)| \leq \sqrt{\eps} \beta m/2$.
\end{itemize}
\end{lemma}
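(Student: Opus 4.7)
The plan is to mirror the proof of Lemma~\ref{Properties after adding edges from H_1 to the G_i's} one round later, running Chernoff bounds on the independent random assignments carried out in the second round and then taking a union bound over $(i,x)$ and $(i,v)$ respectively. The one preliminary bookkeeping step is to check that essentially only the edges of $E'(x)$ can contribute to $d_{G_i}(x)$ when $x \in V_{0i}\cap B$. Indeed, step~1 adds no edge of $G_i$ incident to any vertex of $V_{0i}$: every such edge $yz$ assigned to $G_i$ has its chosen endpoint $y$ with $i\in L_y$, hence $y \in V_{0i}$, so if also $z \in V_{0i}$ the edge is removed at the end of step~1. Moreover no edge in $E'(y)$ for another vertex $y \in B$ can contribute to $d_{G_i}(x)$, because $E(y)$ by definition excludes edges into $B$ and $x \in B$.

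For~(i), fix $i$ and $x \in V_{0i}\cap B$, so $i \in L_x$. Each of the $|E'(x)| = \ell_x\beta m$ edges is assigned to $G_i$ independently with probability $1/\ell_x$; those with the other endpoint in $V_{0i}$ get subsequently removed, and $E(x)$ contains only edges to vertices outside $V_0$, so there are at most $|V_{0i}|-|V_0|\leq 2\eps n$ such bad edges. Using $\ell_x \geq \gamma n/(4\beta m)$ (because $x \in B$) and $\eps \ll \gamma$, the expectation satisfies
\[
\E[d_{G_i}(x)] \;=\; \beta m \;\pm\; \frac{2\eps n}{\ell_x} \;\subseteq\; \Bigl(1\pm \tfrac{1}{2}\sqrt{\eps}\Bigr)\beta m.
\]
Since $d_{G_i}(x)$ is now a sum of independent Bernoulli random variables, Theorem~\ref{Chernoff} applied with $t = \sqrt{\eps}\beta m/2$ gives a failure probability at most $2\exp(-\eps\beta m/12)$, easily beating a union bound over the $\leq rn$ pairs $(i,x)$ and yielding~(i) with probability at least~$5/6$.

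For~(ii), fix $i$ and $v \in V(G_i)\setminus V_{0i}$. Each edge $vw$ with $w \in V_{0i}\cap B$ contributes to $N_{G_i}(v)\cap(V_{0i}\cap B)$ if and only if $vw \in E'(w)$ and step~2 assigns $vw$ to $G_i$; this happens with probability at most $1/\ell_w \leq 4\beta m/(\gamma n)$, and the events are independent over distinct $w$. Using~\eqref{v0i} and the definition of $B$,
\[
\E\bigl[|N_{G_i}(v)\cap(V_{0i}\cap B)|\bigr]\;\leq\;|V_{0i}\cap B|\cdot\frac{4\beta m}{\gamma n}\;\leq\;\frac{12\eps\beta m}{\gamma}\;\leq\;\frac{\sqrt{\eps}\beta m}{4}.
\]
Applying Theorem~\ref{Chernoff} with deviation $\sqrt{\eps}\beta m/4$ gives again an exponentially small failure probability, and a union bound over $(i,v)$ produces~(ii) with probability at least~$5/6$. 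Intersecting the two high-probability events gives the claimed probability~$2/3$.

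There is no real obstacle here: the argument is a routine Chernoff-plus-union-bound exactly parallel to the previous lemma. The only thing to be careful about is the bookkeeping described in the first paragraph, which ensures that after the edge-removal step the quantities $d_{G_i}(x)$ and $|N_{G_i}(v)\cap (V_{0i}\cap B)|$ are genuinely sums of independent Bernoulli variables with the expectations computed above.
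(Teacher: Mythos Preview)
Your proof is correct and follows essentially the same approach as the paper's: Chernoff bounds on the independent assignments of edges in $E'(x)$, followed by a union bound over all pairs $(i,x)$ and $(i,v)$. Your extra bookkeeping paragraph (verifying that the first round and the sets $E'(y)$ for other $y\in B$ contribute nothing to $d_{G_i}(x)$) is a useful clarification that the paper leaves implicit, and in part~(ii) you bound $|V_{0i}\cap B|$ by $|V_{0i}|\le 3\eps n$ rather than by $|B|\le 12\eps n/\gamma$ as the paper does, but this makes no material difference.
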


\begin{proof}
The results will follow by applications of Theorem~\ref{Chernoff}.
\begin{itemize}
\item[(i)] For every $x \in B$ and every $y \notin V_{0i}$ with $xy\in E'(x)$,
the probability that $xy$ is assigned to $G_i$ is
exactly $1/\ell_x$. Since also $|E'(x)|-| V_{0i}| \geq \ell_x
\beta m - 3\eps n$ by \eqref{v0i}, the expected size of $d_{G_i}(x)$ is at most
$\beta m$ and at least%
    \COMMENT{$\beta m-3\eps n/\ell_x\ge \beta m-3\eps n4\beta m/\gamma n
\ge (1 - \frac{\sqrt{\eps}}{2})\beta m$}
$(1 - \sqrt{\eps}/2)\beta m$.
So by Theorem~\ref{Chernoff}, the probability of
failure is at most $2rn \exp{\left(- \frac{\eps \beta^2
m^2}{12\beta m} \right)} \le 1/6$.
\item[(ii)] We have that $|V_{0i} \cap B| \leq |B| \leq
12\eps n/\gamma$ and every edge $yx$ with $y \in V_{0i} \cap B$
has probability either $1/\ell_y$ or 0 of appearing in $G_i$
independently of the others.
So
$$
\E \left( |N_{G_i}(x) \cap (V_{0i} \cap B)| \right)
\le \frac{|B|}{\ell_y} \le \frac{12\eps n}{\gamma} \cdot \frac{4 \beta m}{\gamma n}
\le  \frac{\sqrt{\eps}}{4} \beta m.
$$
So by Theorem~\ref{Chernoff}, the
probability that (ii) fails is at most $2rn \exp{\left(- \frac{
\sqrt{\eps} \beta m}{12} \right)} \le 1/6$. \qedhere
\end{itemize}
\end{proof}

Thus we can make a choice of edges which we add to the $G_i$ so that both properties
in Lemmas~\ref{Properties after adding edges from H_1 to the G_i's}
and~\ref{Properties after adding edges from all E(x) into
the G_i's} hold. This in turn implies that the properties $(a_{2.1})$, $(a_{2.2})$ as
well as the other properties stated at the beginning of the subsection
are satisfied.

\subsection{Adding edges between the clusters of $G_i$}

Recall that by $(a_{2.1})$ every vertex of $G_i$ has degree $(1 \pm
5\gamma)\beta m$. We would like to almost decompose each $G_i$ into
Hamilton cycles. This would definitely be sufficient to complete the
proof of Theorems~\ref{Almost regular} and~\ref{Minimum
Degree}(ii). The first step would be to extract from $G_i$ an
$s$-regular spanning subgraph $S_i$ where $s$ is close to $(1 \pm
5\gamma)\beta m$. Observe that if $G_i$ does not have such an $S_i$,
then definitely it cannot be almost decomposed into Hamilton cycles.
It turns out that at the moment, we cannot guarantee the existence
of such an $S_i$. For example, consider the case when there are no
edges between the vertices of $V_{0i}$ and the vertices in clusters of
the form $U_{j,i}$ (i.e.~all vertices incident to $V_{0i}$ lie in the
$V_{j,i}$). This `unbalanced'  structure of $G_i$ implies that it cannot
 contain any regular spanning subgraph.

Our aim in this subsection is to use edges from $H_2$ in order to
transform the $G_i$'s so that they have some additional properties
which will guarantee the existence of~$S_i$. We will show that
adding only edges of $H_2$ to the $G_i$'s we can for each $1
\leq i \leq r$ guarantee the following new properties:

\begin{itemize}
\item[$(a_{2.1}')$] For each $x \in V(G)$, $d_{G_i}(x)
= (1 \pm 15\gamma)\beta m$;
\item[$(a_6)$] For all clusters $U\neq V$ of $G_i$ so that $U^R$ and
$V^R$ are adjacent in $R$ but not in~$M_i$, we have $|E_{G_i}(U,V)| \geq \beta \gamma
dm^2/8k$ and moreover for every $x \in U \cup V$ we also have
$|N_{G_i}(x) \cap (U \cup V)| \leq 10\beta \gamma m/k$.
\end{itemize}
No edge will be added to more than one of the $G_i$'s and so
(instead of $(a_5')$) we will have
\begin{itemize}
\item [($a_5''$)] $G_1, \ldots , G_r,H_3$ are edge-disjoint.
\end{itemize}
Finally, all edges added to $G_i$ will have both endpoints in
distinct clusters of $G_i$ and moreover for each $1 \leq j \leq k'$,
no edge will be added to $G_i$ between the clusters $U_{j,i}$ and
$V_{j,i}$. In particular, $(a_1),(a_{2.2})$ and $(a_3')$ will still
hold while instead of $(a_4')$ we will have
\begin{itemize}
\item [($a_4''$)] Every edge of $G_i$ lies in a pair of the form
$(V_{0i},U)$, where $U$ is a cluster of $G_i$, or a pair of the form
$(U,V)$, where $U$ and $V$ are clusters of $G_i$ with $U^R$ and $V^R$
adjacent in $R$.
\end{itemize}

For every pair of adjacent clusters $U$ and $V$ of $R$, we will
distribute the edges in $E_{H_2}(U,V)$ to the $G_i$ so that the
following lemma holds. It is then an immediate consequence that all
of the above properties are satisfied.

\begin{lemma}\label{Distribution of edges of H_2 within the G_i's}
Let $U$ and $V$ be adjacent clusters of $R$. Then we can assign some
of the edges of $H_2$ between $U$ and $V$ to the $G_i$ so that
every edge is assigned to at most one $G_i$ and moreover
\begin{itemize}
\item[(i)] If $UV$ is an edge of $M_i$, then no edge is assigned
to $G_i$. Otherwise, at least $\beta \gamma d m^2/8k$ edges are
assigned to $G_i$ and none of these edges has an endvertex in
$(U\setminus U(i))\cup (V\setminus V(i))$;
\item[(ii)] For every $x \in U(i) \cup V(i)$ at most $10 \beta \gamma m
/k$ edges incident to $x$ are assigned to $G_i$.
\end{itemize}
\end{lemma}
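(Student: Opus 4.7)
My plan is to assign the edges of $E_{H_2}(U,V)$ to the $G_i$'s by a simple randomized procedure, and verify (i)--(ii) via Chernoff bounds together with a union bound. For each edge $e \in E_{H_2}(U,V)$, define the set of \emph{eligible} indices $I(e) := \{i \in [r] : UV \notin M_i \text{ and both endpoints of } e \text{ lie in } U(i) \cup V(i)\}$, and set $p := \beta/k$. By~\eqref{lowerr} we have $p\,|I(e)| \leq pr \leq 1$, so for each edge $e$ independently I select at most one $i \in I(e)$ to which $e$ will be assigned: each $i \in I(e)$ is chosen with probability $p$, and $e$ is left unassigned with the remaining probability $1 - p|I(e)|$. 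Each edge is thus assigned to at most one $G_i$, no edge is assigned to $G_i$ when $UV \in M_i$, and no assigned edge has an endpoint in $(U \setminus U(i)) \cup (V \setminus V(i))$---all directly from the definition of $I(e)$. Moreover the assignment indicators across distinct edges are independent.

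To verify the lower bound in~(i), fix $i$ with $UV \notin M_i$ and set $\mathcal{E}_i := E_{H_2}(U(i),V(i))$. Since $|U(i)|, |V(i)| \geq (1-2\eps)m \geq \eps m$ and $(U,V)_{G'}$ is $\eps$-regular of density at least $d$, combining the definition of $\eps$-regularity with Lemma~\ref{Properties after definition of H's}(vii) yields
\[
|\mathcal{E}_i| \geq |E_{H_2\cap G'}(U(i),V(i))| \geq \gamma(d-\eps)(1-2\eps)^2 m^2 - \zeta n^2 \geq \gamma d m^2/2.
\]
The number of edges assigned to $G_i$ is a sum of $|\mathcal{E}_i|$ independent Bernoulli$(p)$ variables with expectation at least $\gamma \beta d m^2/(2k)$, so Theorem~\ref{Chernoff} forces this count to be at least $\beta \gamma d m^2/(8k)$ outside an event with probability exponentially small in $\gamma \beta d m^2/k$.

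For~(ii), fix $i$ and $x \in U(i) \cup V(i)$, say $x \in U(i)$. By Lemma~\ref{Properties after definition of H's}(v), the number of edges of $E_{H_2}(U,V)$ incident to $x$ is at most $\gamma m + \zeta n \leq 2\gamma m$, and each such edge is assigned to $G_i$ independently with probability at most $p = \beta/k$. Hence the expected count is at most $2\gamma\beta m/k$, and Theorem~\ref{Chernoff} keeps it below $10\gamma\beta m/k$ outside an event of super-polynomially small probability. A union bound over the $O(rm)$ choices of $(i,x)$ in~(ii) together with the $r$ choices of $i$ in~(i) then guarantees the existence of a valid assignment. The only piece of delicate bookkeeping is verifying that the $\zeta n$ and $\zeta n^2$ error terms from Lemma~\ref{Properties after definition of H's} are negligible against $\gamma\beta m/k$ and $\gamma\beta d m^2/k$ respectively; this follows from the hierarchy $\zeta \ll 1/M' \ll \beta, d, \gamma$ together with $n \geq n_0$ being sufficiently large relative to $M \geq k$.
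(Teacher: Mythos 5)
Your proof is correct and follows essentially the same approach as the paper: a randomized assignment of the edges of $E_{H_2}(U,V)$ to the $G_i$'s, verified by Chernoff and a union bound. The one variation is cosmetic but slightly cleaner. The paper assigns each edge uniformly at random to one of the $r$ graphs (probability $1/r$ each) and then \emph{discards} ineligible assignments---those with $UV\in M_i$ or with an endpoint in $(U\setminus U(i))\cup(V\setminus V(i))$---which forces an extra estimate that at most half of the edges landing in any $G_i$ are discarded. You instead pre-filter: you define the eligible set $I(e)$ up front and assign $e$ to each $i\in I(e)$ with a fixed probability $p=\beta/k\le 1/r$, using~\eqref{lowerr} to see that $p|I(e)|\le pr\le 1$ so that the scheme is well-defined. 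This eliminates the discarding bookkeeping, at the modest cost of having to lower-bound $|E_{H_2}(U(i),V(i))|$ directly; you do this correctly via $\eps$-regularity of $(U,V)_{G'}$ together with Lemma~\ref{Properties after definition of H's}(vii). The rest (the use of Lemma~\ref{Properties after definition of H's}(v) for (ii) and the union bound over $i$ and $x$) matches the paper. One cosmetic caveat you inherit from the paper: the Chernoff bound as stated in Theorem~\ref{Chernoff} is sharp only in the regime $t\lesssim \E X$, while in (ii) you compare against a threshold several times the mean; a standard upper-tail bound handles this without difficulty, but it is worth being aware that the stated form of Theorem~\ref{Chernoff} is being used slightly outside its comfort zone, just as the paper does.
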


\begin{proof}
Given such $U,V$, we assign every edge of $E_{H_2}(U,V)$
independently and uniformly at random among the $G_i$'s. If an edge
assigned to $G_i$ is incident to
$(U\setminus U(i))\cup (V\setminus V(i))$ it is discarded. If
moreover $UV$ is an edge of $M_i$, then all edges assigned to $G_i$
are discarded.

Since $(U,V)_{G'}$ is $\eps$-regular of density at least $d$,
Lemma~\ref{Properties after definition of H's}(vii) implies that
$|E_{H_2}(U,V)| \geq \gamma d m^2/2$ and so by
Theorem~\ref{Chernoff}, the number of edges assigned to each $G_i$
is with high probability at least $\gamma d m^2/4r\ge \beta \gamma d m^2/4k$.
(The last inequality follows from~(\ref{lowerr}).) To prove~(i), it is
enough to show that (if $UV$ is not an edge of $M_i$ then) at most
half of these edges are discarded. Since $|U \setminus U(i)|,|V \setminus
V(i)| \leq 2\eps m$, there are at most $4\eps m^2$ such edges
which are incident in $G$ to a vertex of $(U\setminus U(i))\cup (V\setminus V(i))$. Of those, with
high probability at most $5\eps m^2/r\leq 25\eps \beta m^2/k$ are assigned to
$G_i$ and are thus discarded. To complete the proof, observe that by
Lemma~\ref{Properties after definition of H's}(v)
every vertex $x\in U$ has $|N_{H_2}(x) \cap V| \leq 3\gamma m/2$
(and similarly for every vertex $x\in V$), so by
Theorem~\ref{Chernoff} with high probability no vertex of $G_i$ is
incident to more than $2 \gamma m /r\le 10 \beta \gamma m /k$ assigned edges.
\end{proof}

\subsection{Finding the regular subgraph $S_i$}

Our aim in this subsection is to show that each $G_i$ contains a
regular spanning subgraph $S_i$ of even degree $s := \left(1 -
15\gamma \right)\beta m$. Moreover, for every cluster $V$
all its vertices have most of their neighbours in the cluster that $V$ is matched to in $M_i$
(see Lemma~\ref{Construction of S_i}).

To prove this lemma, we proceed as follows:
A result of Frieze and Krivelevich~\cite{Frieze&Krivelevich05} (based on the max-flow min-cut theorem)
implies that every pair $(U_{j,i},V_{j,i})$ contains a regular subgraph of degree close to $\beta m$.
However, the example in the previous subsection shows that
it is not possible to combine these to an $s$-regular spanning subgraph of $G_i$ due to the
the existence of the vertices in $V_{0i}$.
So in Lemma~\ref{Construction of T_i} we will first find a subgraph $T_i$ of $G_i$ where the vertices of $V_{0i}$
have degree $s$, every non-exceptional vertex has small degree in $T_i$
and moreover each pair $(U_{j,i},V_{j,i})$ will be balanced with respect to $T_i$
in the following sense:
the sum of the degrees of the vertices of $U_{i,j}$ in $T_i$ is equal to
the sum of the degrees of the vertices of $V_{i,j}$ in $T_i$.
We can then use the  following generalization (Lemma~\ref{Max-flow}, proved in~\cite{Kuhn&Osthus&Treglown})
of the result in~\cite{Frieze&Krivelevich05}: in each pair $(U_{j,i},V_{j,i})$
we can find a subgraph $\Gamma_{j,i}$ with prescribed degrees (as long as the prescribed degrees are not
much smaller than $\beta m$).
We then prescribe these degrees so that together with those in $T_i$ they
add up to $s$. So the union of the $\Gamma_{j,i}$ (over all $1\le j\le k'$)
and $T_i$ yields the desired $s$-regular subgraph $S_i$.
Note that since $S_i$ is regular, $(U_{j,i},V_{j,i})$ is balanced with respect to $S_i$ in the above sense
(i.e.~replacing $T_i$ with $S_i$).
Also, the pair will clearly be balanced with respect to $\Gamma_{j,i}$.
This explains why we needed to ensure that the pair is also balanced with respect to~$T_i$.

\begin{lemma}\label{Construction of S_i}
For every $1 \leq i \leq r$, $G_i$ contains a subgraph $S_i$ such
that
\begin{itemize}
\item[(i)] $S_i$ is $s$-regular, where $s := \left(1 -
15\gamma \right)\beta m$ is even;
\item[(ii)] For every $1 \leq j \leq k'$ and every $x \in U_{j,i}$ we have
$|N_{S_i}(x) \setminus V_{j,i}| \leq \eta \beta m$. Similarly,
$|N_{S_i}(x) \setminus U_{j,i}| \leq \eta \beta m$ for every $x \in V_{j,i}$.
\end{itemize}
\end{lemma}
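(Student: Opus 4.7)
The plan, foreshadowed in the paragraph preceding the statement, is to write $S_i$ as an edge-disjoint union
\[
S_i \;=\; T_i \;\cup\; \Gamma_{1,i}\cup\cdots\cup \Gamma_{k',i},
\]
where $T_i\subseteq G_i$ is supplied by an auxiliary lemma (Lemma~\ref{Construction of T_i}) and each $\Gamma_{j,i}$ is a bipartite subgraph of $(U_{j,i},V_{j,i})_{G_i\setminus T_i}$ with prescribed degrees. The auxiliary lemma will produce a $T_i$ with three properties: every $x\in V_{0i}$ has $d_{T_i}(x)=s$; every non-exceptional vertex $x$ has $d_{T_i}(x)\le \eta\beta m/2$; and for every $1\le j\le k'$ the restriction of $T_i$ to $U_{j,i}\cup V_{j,i}$ is \emph{balanced}, i.e.\ $\sum_{x\in U_{j,i}}d_{T_i}(x)=\sum_{x\in V_{j,i}}d_{T_i}(x)$. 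I will then ask each $\Gamma_{j,i}$ to realise the prescribed degree sequence $d_{\Gamma_{j,i}}(x)=s-d_{T_i}(x)$ on both sides of the pair.

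To produce the $\Gamma_{j,i}$ I will invoke Lemma~\ref{Max-flow}, the degree-prescribed generalisation of the Frieze--Krivelevich max-flow argument. Its hypotheses hold here: the bipartite graph $(U_{j,i},V_{j,i})_{G_i\setminus T_i}$ is still $(3\eps,\beta(1-5\gamma))$-super-regular, inherited from $(a_3')$ because $T_i$ removes at most $\eta\beta m/2$ from every non-exceptional degree; each prescribed degree lies in the window $[(1-16\gamma)\beta m,(1-15\gamma)\beta m]$, and is therefore within an $\eta$-fraction of $\beta m$; and by the balance property of $T_i$, together with $|U_{j,i}|=|V_{j,i}|$, the prescribed degree sums on the two sides agree. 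With the $\Gamma_{j,i}$ in place, property~(i) is automatic: every non-exceptional $x\in U_{j,i}\cup V_{j,i}$ satisfies $d_{S_i}(x)=d_{T_i}(x)+d_{\Gamma_{j,i}}(x)=s$, while every $x\in V_{0i}$ is untouched by the $\Gamma_{j,i}$ and so $d_{S_i}(x)=d_{T_i}(x)=s$. Property~(ii) is equally immediate: for $x\in U_{j,i}$ the only $S_i$-neighbours outside $V_{j,i}$ come from $T_i$, and $d_{T_i}(x)\le\eta\beta m/2\le\eta\beta m$.

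The main obstacle, to be handled in Lemma~\ref{Construction of T_i}, is to simultaneously achieve all three properties of $T_i$. A natural starting graph $T_i^{(0)}$ is obtained by selecting, for each $x\in V_{0i}$, exactly $s$ of its $G_i$-edges; this is possible because $d_{G_i}(x)=(1\pm15\gamma)\beta m\ge s$ by $(a_{2.1}')$. Since every non-exceptional vertex has at most $\sqrt{\eps}\beta m\ll\eta\beta m/2$ neighbours in $V_{0i}$ by $(a_{2.2})$, such a $T_i^{(0)}$ already satisfies the second property. The remaining issue is the balance: the asymmetric distribution of $V_{0i}$-edges between $U_{j,i}$ and $V_{j,i}$ can create a discrepancy of order up to $O(\eps\beta m^2)$ on each matched pair. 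To repair this I will exploit the reservoir of cross-pair edges provided by $(a_6)$: between every matched pair $(U_{j,i},V_{j,i})$ and its neighbours in $R$ there are in total $\Theta(\gamma d m^2)$ edges of $G_i$ that are not yet in $T_i^{(0)}$, and since $\eps\beta\ll \gamma d$, adding or removing individual such edges can adjust the per-side degree sum by $\pm 1$ with far more flexibility than is needed. A suitable greedy (or small-flow) correction using this reservoir, arranged so that each vertex picks up only a negligible extra $T_i$-degree, then produces the desired $T_i$ and completes the construction.
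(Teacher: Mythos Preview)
Your proposal is correct and follows essentially the same route as the paper: construct $T_i$ via Lemma~\ref{Construction of T_i}, then fill in each matched pair using Lemma~\ref{Max-flow} with prescribed degrees $s-d_{T_i}(\cdot)$, and take $S_i$ to be the union. One small point worth making explicit: in the paper's version of Lemma~\ref{Construction of T_i} there is a fourth property, namely $E_{T_i}(U_{j,i},V_{j,i})=\emptyset$, which is what guarantees both that $T_i$ and $\Gamma_{j,i}$ are edge-disjoint and that the pair $(U_{j,i},V_{j,i})_{G_i\setminus T_i}$ equals $(U_{j,i},V_{j,i})_{G_i}$ and so retains its super-regularity intact (your claim that removing $T_i$ preserves $(3\eps,\beta(1-5\gamma))$-super-regularity is not automatic otherwise, since deleting arbitrary low-degree subgraphs need not preserve $\eps$-regularity). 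Your sketched construction of $T_i$ does in fact avoid matched-pair edges---the initial $T_i^{(0)}$ uses only $V_{0i}$-edges and the correction step uses only $(a_6)$-edges between non-matched clusters---so this property holds, but you should state it.
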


As discussed above, to prove Lemma~\ref{Construction of S_i} we will show that
every $G_i$ contains a subgraph $T_i$ with the following properties:

\begin{lemma}\label{Construction of T_i}
Each $G_i$ contains a spanning subgraph $T_i$ such that
\begin{itemize}
\item [(i)] Every vertex $x$ of $V_{0i}$ has degree $s$;
\item [(ii)] Every vertex $y$
of $G_i \setminus V_{0i}$ has degree at most $\eta \beta m$;
\item [(iii)] For every $1 \leq j \leq
k'$, we have $\sum_{x \in U_{j,i}} d_{T_i}(x) = \sum_{x \in V_{j,i}}
d_{T_i}(x)$;
\item [(iv)] For every $1 \leq j \leq k'$, we have
$E_{T_i}(U_{j,i},V_{j,i}) = \emptyset$.
\end{itemize}
\end{lemma}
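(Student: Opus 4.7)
The plan is to construct $T_i$ in two phases. In Phase~1, I select a subset $T_i^0$ of the $G_i$-edges from $V_{0i}$ to the clusters so that every $v\in V_{0i}$ has degree exactly $s$ in $T_i^0$; this is possible since $d_{G_i}(v)\ge s$ for all $v\in V_{0i}$ by~$(a_{2.1}')$. In Phase~2, I will add a set $T_i^*$ of inter-cluster edges, all between clusters belonging to distinct matched pairs of $M_i$, to enforce the balance condition~(iii). Throughout, $T_i:=T_i^0\cup T_i^*$ uses only edges of $G_i$ lying outside the matched pairs, which immediately gives~(iv). Property~(ii) is also automatic: if $x\in U_{j,i}$ then by~$(a_3')$ at least $(1-4\gamma)\beta m$ of the $(1\pm 15\gamma)\beta m$ neighbours of $x$ in $G_i$ lie inside $V_{j,i}$, so $x$ has at most
\[
(1+15\gamma)\beta m-(1-4\gamma)\beta m\le 19\gamma\beta m\le \eta\beta m
\]
neighbours outside $V_{j,i}$ (and analogously for $x\in V_{j,i}$).

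For Phase~2, I compute the imbalances
\[
\Delta_j:=\sum_{x\in U_{j,i}}d_{T_i^0}(x)-\sum_{x\in V_{j,i}}d_{T_i^0}(x).
\]
By~$(a_{2.2})$ every non-exceptional vertex has at most $\sqrt{\eps}\beta m$ neighbours in $V_{0i}$, so $|\Delta_j|\le 2\sqrt{\eps}\beta m^2$. Moreover, $\sum_j\Delta_j$ has the same parity as $s|V_{0i}|$, which is even since $s$ is, and is therefore also even. I now zero out every $\Delta_j$ by repeatedly applying one of two moves, each of which modifies two coordinates of $(\Delta_j)_j$ by $\pm1$: a \emph{cross move} adds a $G_i$-edge between $V_{j,i}$ and $U_{j',i}$ for some $j\in D^+:=\{\ell:\Delta_\ell>0\}$ and $j'\in D^-:=\{\ell:\Delta_\ell<0\}$ with $V_{j,i}^R$ and $U_{j',i}^R$ adjacent in $R$ (reducing $|\Delta_j|$ and $|\Delta_{j'}|$ by $1$ each); a \emph{same-sign move} adds a $V$--$V$ or $U$--$U$ edge between an $R$-adjacent pair with matching signs, reducing two same-sign imbalances simultaneously. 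The parity of $\sum_j\Delta_j$ guarantees that these two moves together suffice.

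The main obstacle is ensuring that enough $R$-adjacent partners and $G_i$-edges exist to carry out the moves without overloading any vertex. By Lemma~\ref{Degrees of R} together with $\delta\ge(1/2+\alpha)n$ and $d\ll 1$, we have $\delta(R)\ge k/(3\beta)$; since $R$ has at most $1/\beta$ parallel edges, every cluster has at least $k/3$ distinct $R$-neighbours. By~$(a_6)$, every such $R$-adjacent pair of clusters (not belonging to a common matched pair) contributes at least $\beta\gamma dm^2/(8k)$ edges to $G_i$. Distributing the at most $|\Delta_j|\le 2\sqrt{\eps}\beta m^2$ balancing edges needed at each cluster over its $\Omega(k)$ partnerships, a simple greedy argument supplies all required edges (since $\sqrt{\eps}\ll\gamma d$) and, by spreading the additions evenly, keeps the Phase~$2$ degree of every non-exceptional vertex below $2\sqrt{\eps}\beta m$. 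Combined with the Phase~$1$ degree of at most $\sqrt{\eps}\beta m$ coming from~$(a_{2.2})$, the total degree of every non-exceptional vertex in $T_i$ remains well below $\eta\beta m$, so~(ii) is satisfied and the construction is complete.
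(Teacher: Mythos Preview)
Your proposal has the right overall shape, but there are two genuine problems.

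\textbf{The hierarchy is reversed in your ``automatic'' argument for~(ii).} In this paper the constants satisfy $\eps \ll \beta \ll \eta \ll d \ll \gamma$, so $\eta$ is \emph{much smaller} than $\gamma$. Hence your inequality $19\gamma\beta m \le \eta\beta m$ is simply false, and~(ii) is not automatic from~$(a_3')$ and~$(a_{2.1}')$. This means you really do need the careful tracking at the end --- but that tracking is also incomplete, as follows.

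\textbf{The balancing mechanism is underspecified.} Your cross moves require an $R$-adjacency between $V_{j,i}^R$ with $j\in D^+$ and $U_{j',i}^R$ with $j'\in D^-$; your same-sign moves require two distinct indices of the same sign. Neither is guaranteed: for example, if at some stage $\sum_j \Delta_j = 2$ with a single $j\in D^+$ and $D^- = \emptyset$, you are stuck. You also do not control how often a given cluster is used as a \emph{partner} for other clusters' balancing, so your claimed Phase~2 degree bound of $2\sqrt{\eps}\beta m$ is not justified. The paper avoids both issues with a ``neutral partner'' trick: for each $j$ with $u_{j,i}\neq v_{j,i}$, it finds (using $\delta(R)\ge(1/2+\alpha)k/\beta$) some $j'$ such that $U_{j,i}^R$ is $R$-adjacent to \emph{both} $U_{j',i}^R$ and $V_{j',i}^R$, and adds the same number of edges from $U_{j,i}$ to each; this changes $\Delta_j$ by an even amount while leaving $\Delta_{j'}$ untouched. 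A short parity-fixing step handles the case when $\Delta_j$ is odd. One then tracks that no cluster is used as a partner more than $2\eta k$ times, and combines this with the bound $|N_{G_i}(x)\cap(U\cup V)|\le 10\beta\gamma m/k$ from~$(a_6)$ to get a Phase~2 contribution of at most $20\eta\gamma\beta m \ll \eta\beta m$ per vertex, which together with the $\sqrt{\eps}\beta m$ bound from~$(a_{2.2})$ gives~(ii).
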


Having proved this lemma, we can use the following result
from~\cite{Kuhn&Osthus&Treglown} to deduce the existence of $S_i$.

\begin{lemma}\label{Max-flow}
Let $0 < 1/m' \ll \eps  \ll \beta' \ll \eta \ll \eta' \ll 1$.
Suppose that $\Gamma =(U,V)$ is an $(\eps , \beta')$-super-regular
pair where $|U|=|V|=m'$. Define $\tau := (1- \eta')\beta' m'$.
Suppose we have a non-negative integer $x_u \leq \eta \beta' m'$
associated with each $u \in U$ and a non-negative integer $y_v \leq
\eta\beta' m'$ associated with each $v \in V$ such that $\sum _{u
\in U} x_u = \sum _{v \in V} y_v$. Then $\Gamma$ contains a spanning
subgraph $\Gamma '$ in which $\tau -x_u$ is the degree of each $u
\in U$ and $\tau - y_v$ is the degree of each $v \in V$.
\end{lemma}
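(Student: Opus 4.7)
The plan is to apply the max-flow min-cut theorem. Construct a directed network $N$ with source $s$, sink $t$, and vertex sets $U$ and $V$: include an arc $su$ of capacity $\tau - x_u$ for each $u \in U$, an arc $vt$ of capacity $\tau - y_v$ for each $v \in V$, and, for every edge $uv$ of $\Gamma$, an arc $uv$ of capacity $1$. Since all capacities are integral, any maximum flow may be taken integral; decomposing such a flow into arc-disjoint $s$-$t$ paths and recording the middle arcs $uv$ actually used yields a subgraph $\Gamma'$ of $\Gamma$ in which $d_{\Gamma'}(u)$ equals the flow on $su$ and $d_{\Gamma'}(v)$ equals the flow on $vt$. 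The required $\Gamma'$ therefore exists if and only if $N$ admits an $(s,t)$-flow of value $D := \sum_u(\tau - x_u) = \sum_v(\tau - y_v) = m'\tau - \sum_u x_u$, the two sums agreeing by hypothesis.

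By max-flow min-cut, it suffices to show every $(s,t)$-cut has capacity at least $D$. Such a cut is specified by sets $A_U \subseteq U$ and $A_V \subseteq V$ on the source side, with capacity
\[
c(A_U, A_V) = \sum_{u \notin A_U}(\tau - x_u) + \sum_{v \in A_V}(\tau - y_v) + e_\Gamma(A_U, V \setminus A_V).
\]
After subtracting $D$ this reduces to showing
\[
e_\Gamma(A_U, V \setminus A_V) \;\geq\; \sum_{u \in A_U}(\tau - x_u) - \sum_{v \in A_V}(\tau - y_v) \;=:\; R.
\]
Set $a = |A_U|$ and $c = |V \setminus A_V|$. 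I would then proceed by a case analysis. If $R \leq 0$ there is nothing to prove. If $A_V = V$ (so $c = 0$) the identity $\sum x_u = \sum y_v$ together with $x_u \leq \eta\beta'm' < \tau$ forces $R \leq 0$. If $A_U = U$, super-regularity gives $e_\Gamma(U, V \setminus A_V) = \sum_{v \notin A_V} d_U(v) \geq c\beta'm' > c\tau \geq R$. If $a \geq \eps m'$ and $c \geq \eps m'$, I would use $\eps$-regularity to obtain $e_\Gamma \geq (\beta' - \eps)ac$ and combine it with the bound $R \leq (a-b)\tau + b\eta\beta'm'$ to reduce the required inequality to a polynomial inequality in $a,b$ which closes by the hierarchy $\eps \ll \beta' \ll \eta \ll \eta' \ll 1$. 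In the remaining cases either $a$ or $c$ is smaller than $\eps m'$, and I would instead use the pointwise super-regularity estimate $d_{A_U}(v) \geq \beta'm' - (m'-a)$ (valid and useful whenever $m' - a$ is small compared to $\beta'm'$) or its symmetric analogue, together with the bulk contribution $\sum_{u \notin A_U}(\tau - x_u) \geq (m'-a)(1 - \eta' - \eta)\beta'm'$ to the cut.

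The main obstacle I anticipate is the intermediate asymmetric regime, e.g.\ $a$ moderately close to $m'$ while $c$ lies strictly between $1$ and $\eps m'$: there $\eps$-regularity is unavailable and the pointwise super-regularity bound is only modestly strong. The argument there will play the slack $\beta'm' - (m'-a) \geq \beta'm'/2$ (valid once $\eps \leq \beta'/4$, hence by $\eps \ll \beta'$) against the defect $c\tau$ coming from $\sum_{v \notin A_V}(\tau - y_v)$, using the two-sided inequality $\sum_{u \in A_U}x_u - \sum_{v \in A_V}y_v \in [-(m'-a)\eta\beta'm', \, c\eta\beta'm']$ afforded by $\sum x_u = \sum y_v$. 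The hierarchy $\eps \ll \beta' \ll \eta \ll \eta'$ is arranged precisely so that the gap $\eta'\beta'm'$ between $\tau$ and $\beta'm'$ strictly dominates all the error terms (regularity error $\eps$ and defect budget $\eta\beta'm'$) that appear.
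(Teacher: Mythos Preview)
The paper does not actually prove this lemma: it is quoted as a result from~\cite{Kuhn&Osthus&Treglown} (which in turn generalises the max-flow argument of Frieze and Krivelevich~\cite{Frieze&Krivelevich05}). So there is no in-paper proof to compare against.

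Your max-flow min-cut reduction is exactly the method used in those references, and your reformulation as $e_\Gamma(A_U, V\setminus A_V) \geq R$ is correct. The case analysis you outline is also the right one. One small slip: in the regime $c < \eps m'$ with $a$ close to $m'$, you propose to compare the slack $\beta'm' - (m'-a) \geq \beta'm'/2$ directly against the ``defect'' $c\tau$. But $c\beta'm'/2 < c\tau = c(1-\eta')\beta'm'$ since $\eta' \ll 1$, so that comparison alone does not close. What does work is to keep the sharper bound $R \leq c\tau - (m'-a)(1-\eta'-\eta)\beta'm'$ (which is exactly what your two-sided inequality $\sum_{u\in A_U}x_u - \sum_{v\in A_V}y_v \in [-(m'-a)\eta\beta'm',\, c\eta\beta'm']$ yields) and compare it with $e \geq c\bigl(\beta'm' - (m'-a)\bigr)$; the difference is
\[
e - R \;\geq\; c\,\eta'\beta'm' \;+\; (m'-a)\bigl[(1-\eta'-\eta)\beta'm' - c\bigr] \;\geq\; 0,
\]
since $c < \eps m' \ll \beta'm'$. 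Your final sentence already identifies $\eta'\beta'm'$ as the operative slack, so you clearly have the right mechanism; it just needs to be played against the tighter upper bound on $R$ rather than against $c\tau$ alone. With that adjustment (and the symmetric argument for $a < \eps m'$, which goes through the same way), the case analysis is complete.
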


\begin{proof}[Proof of Lemma~\ref{Construction of S_i}]
To derive Lemma~\ref{Construction of S_i} from Lemmas~\ref{Construction of T_i}
and~\ref{Max-flow}, recall that by ($a'_3$) for each $1 \leq j \leq k'$ the pair $(U_{j,i},V_{j,i})$
is $(2\eps,(1-4\gamma)\beta)$-super-regular. Thus we can apply Lemma~\ref{Max-flow}
to $(U_{j,i},V_{j,i})$ with $2\eta$ playing the role of $\eta$ in the lemma,
$\beta':=(1-4\gamma)\beta$, $\eta':= 1 -
\frac{1-15 \gamma}{(1-4\gamma)(1-2\eps)}$, $m':=(1-2\eps)m$, $x_u = d_{T_i}(u)$ for
every $u \in U_{j,i}$ and $y_v = d_{T_i}(v)$ for every $v \in
V_{j,i}$. Observe that with this value of $\eta'$, we have $\tau
= \left(1 - 15 \gamma \right)\beta m=s$.
Lemma~\ref{Construction of T_i}(ii) implies that for each $u \in
U_{j,i}$ and each $v \in V_{j,i}$ we have $2\eta \beta' m' =
2\eta (1-4\gamma)(1-2\eps)\beta m \geq  \eta \beta m \geq
x_u,y_v$. Lemma~\ref{Construction of T_i}(iii) implies
that $\sum _{u \in U} x_u = \sum _{v \in V} y_v$. Thus the
conditions of Lemma~\ref{Max-flow} hold and we obtain a subgraph
$\Gamma_{j,i}$ of $(U_{j,i},V_{j,i})$ in which every $u \in U_{j,i}$
has degree $s - x_u$ and every $v \in V_{j,i}$ has degree $s - y_v$.
It follows from Lemma~\ref{Construction
of T_i}(i),(ii) and (iv) that $S_i = T_i \cup \left(\bigcup_{j=1}^{k'} \Gamma_{j,i}
\right)$ is as required in Lemma~\ref{Construction of S_i}.
\end{proof}

\begin{proof}[Proof of Lemma~\ref{Construction of T_i}]
We give an algorithmic construction of $T_i$. We begin by
arbitrarily choosing $s$ edges (of $G_i$) incident to each vertex
$x$ of $V_{0i}$. Recall that by $(a_{2.2})$ this means that every vertex of $G_i
\setminus V_{0i}$ currently has degree at most $\sqrt{\eps} \beta m$.
Let us write $u_{j,i} := \sum_{x \in U_{j,i}} d_{T_i}(x)$ and
$v_{j,i} := \sum_{x \in V_{j,i}} d_{T_i}(x)$. Note that these values
will keep changing as we add more edges from $G_i$ into $T_i$
and we currently have $|u_{j,i}-v_{j,i}|\leq \sqrt{\eps}\beta m^2$.

\medskip

\noindent \textbf{Step 1.} \emph{By adding at most $k'$ more edges,
we may assume that for every $1 \leq j \leq k'$,  $u_{j,i} -
v_{j,i}$ is even.}

\smallskip

\noindent

To prove that this is possible, take any $j$ for which $u_{j,i} -
v_{j,i}$ is odd and observe that there is a $j' \neq j$ for which
$u_{j',i} - v_{j',i}$ is also odd. This holds because $s$ is even and so there is an even
number of edges between $V_{0i}$ and $G_i \setminus V_{0i}$. Let $V$
be a cluster of $R$ which is a common neighbour (in $R$) of
$U_{j,i}^R$ and $U_{j',i}^R$ and which is distinct from $V_{j,i}^R$
and $V_{j',i}^R$. The existence of $V$ is guaranteed by the degree
conditions of $R$ (see Lemma~\ref{Degrees of R}(i)). Now we take an edge of $G_i$ between $V(i)$ and
$U_{j,i}$ not already added to $T_i$ and add it to $T_i$. We also
take an edge of $G_i$ between $V(i)$ and $U_{j',i}$ not already
added to $T_i$ and add it to $T_i$. This makes the differences for
$j$ and $j'$ even and preserves the parity of all other differences. So we can
perform Step 1.

\smallskip

In each subsequent step, we will take two clusters $U$ and $V$ of
$G_i$ and add several edges between them to $T_i$, these edges are
chosen from the edges of $G_i$ which are not already used. The
clusters $U^R$ and $V^R$ will be adjacent in $R$ but not in $M_i$, so condition (iv) will remain true.
We will only add at most $\beta \gamma d m^2/20k$ edges at each step
and we will never add edges between $U$ and $V$ more than twice.
Condition~$(a_6)$ guarantees that we have enough edges for this. (Recall that we have
already added at most $k'$ edges between each pair of clusters.) At the end of all
these steps condition (iii) will hold. Moreover, we will guarantee
that no cluster $U$ is used in more than $2\eta k$ of these
steps and so by~$(a_6)$ the degree of each vertex in $T_i$ will not be increased
by more than $20\beta\eta \gamma m\ll \eta \beta m$ and so condition (ii) will also be
satisfied.

We call a cluster $U$ of $G_i$ \emph{bad} if it is already used in
more than $\eta k$ of the above steps. We will also guarantee
that the number of the above steps is at most $\eta^2 k^2/2$. Since in each step
we use two clusters, this will imply that at each step
there are at most $\eta k$ bad clusters.

Let us now show how all the above can be achieved. Let us take a $j$
for which $u_{j,i} \neq v_{j,i}$, say $u_{j,i} < v_{j,i}$. (The case
$u_{j,i} > v_{j,i}$ is identical and will thus be omitted.) Since
by Lemma~\ref{Degrees of R}
the minimum degree of $R$ is at least $(\delta/n - 2d)k/\beta$ and
since there are no more than $1/\beta$ parallel edges between any
two vertices of $R$, it follows that there are at least $(\delta/n -
1/2 - 2d)k \geq \alpha k/2$ indices $j'$ such that $U_{j,i}^R$ is
adjacent to both $U_{j',i}^R$ and $V_{j',i}^R$ in $R$. Since there
are at most $\eta k$ bad clusters, there are at least $\alpha
k/3$ indices $j'$ such that $U_{j,i}^R$ is adjacent to both
$U_{j',i}^R$ and $V_{j',i}^R$ in $R$ and moreover none of
$U_{j',i}^R$ and $V_{j',i}^R$  is bad. As long as $v_{j,i} - u_{j,i}
> \beta \gamma d m^2/10k$, we add exactly $\beta \gamma d m^2/20k$
edges between $U_{j,i}$ and $U_{j',i}$ and exactly $\beta \gamma d
m^2/20k$ edges between $U_{j,i}$ and $V_{j',i}$. Note that this
decreases the difference $v_{j,i} - u_{j,i}$ and leaves all other
differences the same. Finally, if $0 < v_{j,i} - u_{j,i} < \beta
\gamma d m^2/10k$ then we carry out the same step except that we add
$(v_{j,i} - u_{j,i})/2$ edges between $U_{j,i}$ and $U_{j',i}$ and
between $U_{j,i}$ and $V_{j',i}$ instead. (Recall that Step 1
guarantees that $v_{j,i} - u_{j,i}$ is even.) As observed at the beginning of the proof,
the initial difference between $u_{j,i}$ and $v_{j,i}$ is at most
$\sqrt{\eps} \beta m^2$. This might have increased to at most
$2\sqrt{\eps}  \beta m^2$ after performing Step~1. Thus it takes at most
$20\sqrt{\eps}k/\gamma d+1\ll \eta k$ steps to make
$u_{j,i}$ and $v_{j,i}$ equal and so we may choose a different index $j'$
in each of these steps.

We repeat this process for all $1 \leq j \leq k'$. Obviously, (iii)
holds after we have considered all such $j$'s. It remains to check
that all the conditions that we claimed to be true throughout the
process are indeed true. As for each $j$ it takes at most $\eta k$ steps to make
$u_{j,i}$ and $v_{j,i}$ equal, the total number of steps
is at most $\eta^2 k^2/2$. Since moreover, a cluster $U_{j,i}$ or
$V_{j,i}$ is used in a step only when $j$ is considered or when it
is not bad, it is never used in more than $2\eta k$ steps, as
promised.
\end{proof}

\subsection{Choosing an almost 2-factor decomposition of $S_i$}

Since each $S_i$ is regular of even degree, by
Theorem~\ref{Petersen} we can decompose it into 2-factors.
Our aim will be to use the edges of $H_{3,i}$ to
transform each 2-factor in this decomposition into a Hamilton cycle.
To achieve this, we need each 2-factor in the decomposition
to possess some additional properties.
Firstly, we would like each 2-factor to contain $o(n)$ cycles.
To motivate the second property, note that
by Lemma~\ref{Construction of S_i}(ii),
most edges of $S_i$ go between pairs of clusters $(U_{j,i},V_{j,i})$.
So one would expect that this is also the case for a typical $2$-factor $F$.
We will need the following stronger version of this property: for every pair
$(U_{j,i},V_{j,i})$ of clusters of~$G_i$ and every vertex $u\in U_{j,i}$,
most of its $S_i$-neighbours in $V_{j,i}$ have both their $F$-neighbours in
$U_{j,i}$ (and similarly for every $v \in V_{j,i}$).
We will also need the analogous property with $S_i$ replaced by $H_{3,i}$.

The following lemma tells us that we can achieve the above
properties if we only demand an almost 2-factor decomposition.

\begin{lemma}\label{Properties of 2-factor decomposition}
$S_i$ contains at least $\left(1 - \sqrt{\gamma} \right) \frac{\beta
m}{2}$ edge-disjoint $2$-factors such that for every such $2$-factor $F$ the
following hold:
\begin{itemize}
\item[(i)] $F$ contains at most $n/(\log n)^{1/5}$ cycles;
\item[(ii)] For every $1 \leq j \leq k'$ and every $u\in U_{j,i}$,
the number of $H_{3,i}$-neighbours of $u$ in $V_{j,i}$ which have an $F$-neighbour outside $U_{j,i}$ is at
most   $\gamma^3 \beta m$ (and similarly for the $H_{3,i}$-neighbours in $U_{j,i}$ of each $v \in V_{j,i}$).
\item[(iii)] For every $1 \leq j \leq k'$ and every $u\in U_{j,i}$,
the number of $S_i$-neighbours of $u$ in $V_{j,i}$ which have an $F$-neighbour outside $U_{j,i}$ is at
most   $\gamma^3 \beta m$ (and similarly for the $S_i$-neighbours in $U_{j,i}$ of each $v \in V_{j,i}$).
\end{itemize}
\end{lemma}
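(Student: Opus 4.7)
My plan is to apply Petersen's Theorem to the $s$-regular graph $S_i$ (noting that $s$ is even by construction) to obtain a decomposition into $s/2$ edge-disjoint $2$-factors $F_1,\dots,F_{s/2}$, and then to discard those $F_\ell$ which violate any of~(i), (ii) or~(iii). The task is to show that only an $o(1)$-fraction of the $F_\ell$ need to be discarded, which comfortably beats the $\sqrt{\gamma}$ loss allowed by the lemma.

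For~(i) I would invoke the permanent-based estimate of Frieze and Krivelevich~\cite{Frieze&Krivelevich05}, in the refined form proved in~\cite{randmatch} and applied in~\cite{Kuhn&Osthus&Treglown}. By $(a_3')$ and Lemma~\ref{Construction of S_i}(ii), the graph $S_i$ is $s$-regular with $s$ of order $\beta m$, has almost all of its edges inside the super-regular pairs $(U_{j,i},V_{j,i})$, and therefore inherits enough pseudo-randomness from $G$. The cited result then says that in a suitable random $2$-factor decomposition of $S_i$ the expected number of cycles in each $F_\ell$ is $o(n/\log n)$, so Markov's inequality eliminates all but a $(\log n)^{-4/5}$-fraction of the $F_\ell$ from consideration in~(i).

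For~(ii) and~(iii) the structural input is again Lemma~\ref{Construction of S_i}(ii): for every $v\in V_{j,i}$ at most $\eta\beta m$ of the $S_i$-edges at $v$ leave $U_{j,i}$. Since the $F_\ell$ are edge-disjoint, summing over $v\in V_{j,i}$ bounds the total number of pairs $(\ell,v)$ for which $v$ has an $F_\ell$-edge outside $U_{j,i}$ by $\eta\beta m^2$. A standard double counting then shows, for each individual $u\in U_{j,i}$, that the average over $\ell$ of the $S_i$-bad count $X_u^\ell$ appearing in~(iii) is at most $2\eta\beta m$, and so by Markov only a $2\eta/\gamma^3$-fraction of $\ell$ violate $X_u^\ell\le \gamma^3\beta m$ for this specific $u$. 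To upgrade this per-vertex Markov bound into a statement holding simultaneously over all $u$ for a $(1-o(1))$-fraction of the~$F_\ell$, I would take the $2$-factor decomposition to be random (via the switching argument underlying the permanent bound) so as to obtain an exponential tail $\Pr[X_u^\ell>\gamma^3\beta m]\le n^{-2}$ for each fixed $u$ and~$\ell$; a union bound over the $n$ choices of $u$ is then harmless. Property~(ii) is handled identically, using the $H_{3,i}$-degree bound $d_{H_{3,i}}(u)\le \beta m$ from Lemma~\ref{Distribution of edges of H_3 within the G_i's}(iii) in place of $d_{S_i}(u)$.

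The main obstacle is precisely this concentration step in~(ii) and~(iii): a naive Markov bound on an arbitrary Petersen decomposition gives only per-vertex averages, which lose a factor of $n$ under the union bound over $u$. One therefore really needs to exploit the randomness of the decomposition together with the switching/permanent machinery of~\cite{Frieze&Krivelevich05,Kuhn&Osthus&Treglown} to obtain exponentially small tails, and verifying that this machinery applies to the particular graph $S_i$ in our setting is the most technical part of the argument.
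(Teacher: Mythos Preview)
Your overall strategy---decompose once via Petersen and then discard the bad $2$-factors---is not the route the paper takes, and the gap you yourself flag in the concentration step is real. The paper instead orients $S_i$ into an $s/2$-regular oriented graph $D$ and then \emph{iteratively} extracts one good $1$-factor at a time, using Lemma~\ref{Existence 2-factor decomposition} (quoted from~\cite{Kuhn&Osthus&Treglown}) as a black box. That lemma, given up to $5n$ vertex sets $A_t$ and a sparse ``bad-edge'' subgraph $H$, produces a single $1$-factor $F$ that simultaneously has at most $n/(\log n)^{1/5}$ cycles \emph{and} has at most $\theta_2|A_t|$ edges of $H\cap F$ incident to every $A_t$. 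The paper takes the $A_t$ to be precisely the neighbourhoods $N_{H_{3,i}}(u)\cap V_{j,i}$ and $N_{S_i}(u)\cap V_{j,i}$ (and their symmetric counterparts), and takes $H$ to be the set of $S_i$-edges leaving their matched cluster, which has degree at most $\eta\beta m$ by Lemma~\ref{Construction of S_i}(ii). The ``for all $u$'' quantifier in~(ii) and~(iii) is thus absorbed into the ``for each $t$'' clause of the black-box lemma, and no union bound or concentration over vertices is ever needed. One then removes $F$ from $D$ and repeats while the remaining degree stays above $\sqrt{\gamma}\beta m/4$.

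By contrast, your plan requires exponential tails $\Pr[X_u^\ell>\gamma^3\beta m]\le n^{-2}$ for each fixed $u$ in a random decomposition, and it is not at all clear that the permanent/switching machinery of~\cite{Frieze&Krivelevich05,randmatch,Kuhn&Osthus&Treglown} delivers this: those papers control the \emph{expected} number of cycles (or bad incidences) in a uniformly random $1$-factor, not concentration of per-vertex statistics across the factors of a random decomposition. Even your treatment of~(i) is loose on this point---Markov on the expected cycle count of a uniform random $2$-factor does not immediately say anything about the factors of a fixed (or random) Petersen decomposition. The paper's one-at-a-time extraction sidesteps both issues cleanly.
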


The proof of Lemma~\ref{Properties of 2-factor decomposition} will rely on the following lemma
from~\cite{Kuhn&Osthus&Treglown}. This lemma is in turn based on
a result in~\cite{randmatch} whose proof relies on a
probabilistic approach already used in~\cite{Frieze&Krivelevich05}.
A \emph{1-factor} in an oriented graph~$D$ is a collection of disjoint
directed cycles covering all the vertices of~$D$.

\begin{lemma}\label{Existence 2-factor decomposition}
Let $0 < \theta_1,\theta_2,\theta_3 < 1/2$ be such that $\theta_1/\theta_3
\ll \theta_2$. Let $D$ be a $\theta_3 n$-regular oriented graph
whose order $n$ is sufficiently large. Suppose $A_1 , \ldots ,
A_{5n}$ are sets of vertices in $D$ with $|A_t| \geq n^{1/2}$. Let
$H$ be an oriented subgraph of $D$ such that $d^+_H(x),d^-_H(x) \leq
\theta_1 n$ for all $x \in A_t$ and each $t$. Then $D$ has a
$1$-factor $F$ such that
\begin{itemize}
\item[(i)] $F$ contains at most $n/(\log n)^{1/5}$ cycles;
\item[(ii)] For each $t$, at most $\theta_2 |A_t|$ edges of $H \cap
F$ are incident to $A_t$.
\end{itemize}
\end{lemma}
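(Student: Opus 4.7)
The plan is to orient $S_i$ once and then iteratively apply Lemma~\ref{Existence 2-factor decomposition} to extract 1-factors; each such 1-factor, after undirecting, yields one 2-factor of $S_i$ with the required properties. Since $s$ is even and $S_i$ is $s$-regular, each component of $S_i$ has an Eulerian circuit, and orienting the edges along these circuits produces an oriented graph $D$ in which every vertex has in- and out-degree $s/2$. Oriented graphs contain no directed $2$-cycle, so a $1$-factor of $D$ undirects to a $2$-factor of $S_i$ with the same number of cycles; moreover, deleting a $1$-factor from a regular oriented graph leaves it regular, so the iteration is well defined.

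To encode conditions~(ii) and~(iii), we let $H$ be the oriented subgraph of $D$ consisting of all edges whose underlying edge of $S_i$ does not lie inside a pair $(U_{j,i},V_{j,i})$; equivalently, $H$ carries every $S_i$-edge incident to $V_{0i}$ and every $S_i$-edge between clusters not paired by $M_i$. By Lemma~\ref{Construction of S_i}(ii), every vertex $x\in U_{j,i}\cup V_{j,i}$ satisfies $d^{\pm}_H(x)\le \eta\beta m$; vertices in $V_{0i}$ can have larger $H$-degree, but they will not belong to any $A_t$. As the sets $A_t$ we take, for every $1\le j\le k'$, every $u\in U_{j,i}$ and every $v\in V_{j,i}$, the four sets
\[
N_{S_i}(u)\cap V_{j,i},\qquad N_{H_{3,i}}(u)\cap V_{j,i},\qquad N_{S_i}(v)\cap U_{j,i},\qquad N_{H_{3,i}}(v)\cap U_{j,i}.
\]
The super-regularity of $(U_{j,i},V_{j,i})_{H_{3,i}}$ from Lemma~\ref{Distribution of edges of H_3 within the G_i's}(ii), together with $|N_{S_i}(u)\setminus V_{j,i}|\le \eta\beta m$ from Lemma~\ref{Construction of S_i}(ii), ensures each such $A_t$ has size at least $\gamma\beta m/10\ge n^{1/2}$, and the total number of such sets is at most $4n\le 5n$.

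Write $D_p$ for the oriented graph remaining after $p$ $1$-factors have been removed, so that $D_p$ is regular with common in- and out-degree $s/2-p$. At step $p$ we apply Lemma~\ref{Existence 2-factor decomposition} to $D_p$, with $H$ replaced by $H\cap D_p$, and parameters $\theta_3:=(s/2-p)/n$, $\theta_1:=\eta\beta m/n$, $\theta_2:=\gamma^3$. For $0\le p<(1-\sqrt{\gamma})\beta m/2$ we have $\theta_3\ge \sqrt{\gamma}\beta m/(3n)$, whence $\theta_1/\theta_3\le 3\eta/\sqrt{\gamma}\ll \gamma^3=\theta_2$ after refining the constant hierarchy so that $\eta\ll \gamma^{7/2}$ (mild, and consistent with $\eta\ll d\ll \gamma$). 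The lemma then supplies a $1$-factor $F_p$ of $D_p$ with at most $n/(\log n)^{1/5}$ cycles and with at most $\theta_2|A_t|\le \gamma^3\beta m$ edges of $(H\cap D_p)\cap F_p$ incident to each $A_t$. Undirecting $F_p$ gives a $2$-factor of $S_i$ satisfying~(i), and since each $w\in A_t$ possessing an $F_p$-neighbour outside its matched cluster contributes at least one $H$-edge incident to $A_t$, properties~(ii) and~(iii) follow. Iterating for $p=0,1,\dots,\lfloor(1-\sqrt{\gamma})\beta m/2\rfloor-1$ yields the required family of edge-disjoint $2$-factors.

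The main difficulty is controlling the ratio $\theta_1/\theta_3$ all the way through the iteration: as we remove $1$-factors the degree of $D_p$ drops, and by the final iteration $\theta_3$ has shrunk by a factor of order $\sqrt{\gamma}$ compared with its initial value. This is precisely why we stop after $(1-\sqrt{\gamma})\beta m/2$ extractions rather than completing a full decomposition, and why we take $\eta$ small enough as a polynomial in $\gamma$ in the constant hierarchy.
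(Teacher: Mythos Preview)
Your proposal does not prove the stated lemma. The statement asks you to show that a $\theta_3 n$-regular oriented graph $D$ admits a $1$-factor with few cycles and with few $H\cap F$-edges touching each $A_t$. You never address this: instead you \emph{assume} Lemma~\ref{Existence 2-factor decomposition} as a black box and iterate it to extract $(1-\sqrt{\gamma})\beta m/2$ edge-disjoint $2$-factors of $S_i$ with the properties listed in Lemma~\ref{Properties of 2-factor decomposition}. In other words, you have written a proof of Lemma~\ref{Properties of 2-factor decomposition}, not of Lemma~\ref{Existence 2-factor decomposition}.

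In the paper, Lemma~\ref{Existence 2-factor decomposition} is not proved at all: it is quoted from~\cite{Kuhn&Osthus&Treglown} (which in turn builds on~\cite{randmatch} and the permanent-based probabilistic approach of~\cite{Frieze&Krivelevich05}). What you have written is, essentially line for line, the paper's own proof of Lemma~\ref{Properties of 2-factor decomposition}: the Eulerian orientation of $S_i$, the choice of the $A_t$ as the four types of neighbourhood sets $N_{S_i}(u)\cap V_{j,i}$, $N_{H_{3,i}}(u)\cap V_{j,i}$, etc., the choice of $H$ as the $S_i$-edges not lying in a matched pair, the parameters $\theta_1=\eta\beta m/n$ and $\theta_2=\gamma^3$, and the stopping rule once $\theta_3$ falls to order $\sqrt{\gamma}\beta m/n$. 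So as a derivation of Lemma~\ref{Properties of 2-factor decomposition} from Lemma~\ref{Existence 2-factor decomposition} your argument is correct and coincides with the paper's; but as a proof of the statement you were actually given, it simply is not one.
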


\begin{proof}[Proof of Lemma~\ref{Properties of 2-factor
decomposition}]

We begin by choosing an arbitrary orientation $D$ of $S_i$ with
the property that every vertex has indegree and outdegree equal to
$s/2$. The existence of such an orientation follows e.g.~from
Theorem~\ref{Petersen}.\COMMENT{Or quote Euler.} We repeatedly
extract 1-factors of $D$ satisfying the properties of
Lemma~\ref{Properties of 2-factor decomposition} as follows: Suppose
we have extracted some 1-factors from $D$ and we are left with a
$\theta_3 n$-regular oriented graph $D$, where $\theta_3 \geq
\sqrt{\gamma} \beta m/4n$.

For the sets $A_t$, we take all sets of the
form $N_{H_{3,i}}(u)\cap V_{j,i}$ and all sets of the
form $N_{S_i}(u)\cap V_{j,i}$ (for all $u\in U_{j,i}$ and $j=1,\dots,k'$)
as well as all sets of the form $N_{H_{3,i}}(v)\cap U_{j,i}$ and
all sets of the form $N_{S_i}(v)\cap U_{j,i}$ (for all $v\in V_{j,i}$ and
$j=1,\dots,k'$).
Even though the number of these sets is less than $5n$, this is not a problem as for
example we might repeat each set several times. Lemmas~\ref{Distribution of edges of H_3 within the G_i's}(ii)
and~\ref{Construction of S_i}(ii) imply that these sets have size at least%
   \COMMENT{don't get $\gamma\beta m/5$ since the cluster size is not quite $m$}
$\gamma\beta m/6\gg n^{1/2}$.

For the subgraph $H$ of $D$ we take the graph consisting of all those edges
of $S_i$ which do not belong to some pair $(U_{j,i},V_{j,i})$. Then $d^+_H(x),d^-_H(x) \leq \theta_1
n$ for all $x\in A_t$ (and each~$t$), where by Lemma~\ref{Construction of S_i}(ii) we can
take $\theta_1 =\eta \beta m/n$.

Thus, taking $\theta_2 = \gamma^3$
all conditions of Lemma~\ref{Existence 2-factor
decomposition} are satisfied and so we obtain a 1-factor $F$ of $D$
satisfying all properties of Lemma~\ref{Properties of 2-factor decomposition}.
(The fact that $s \le \beta m$ and
Lemma~\ref{Distribution of edges of H_3 within the G_i's}(iii) imply that
the $A_t$ have size at most $\beta m$ and so~$F$ satisfies
Lemma~\ref{Properties of 2-factor decomposition}(ii) and~(iii).) It follows that we can keep extracting such
1-factors for as long as the degree of $D$ is at least
$\sqrt{\gamma} \beta m/4$ and in particular we can extract at least
$(1 - \sqrt{\gamma})\beta m/2$ such 1-factors as required.
\end{proof}

\subsection{Transforming the 2-factors into Hamilton cycles}

To finish the proof it remains to show how we can use (for each $i$)
the edges of $H_{3,i}$ to transform each of the 2-factors of $S_i$ created by
Lemma~\ref{Properties of 2-factor decomposition}
into a Hamilton cycle. By Lemma~\ref{Properties of 2-factor decomposition},
this will imply that the total number of edge-disjoint Hamilton cycles
we construct is $(1-\sqrt{\gamma})r \beta m/2$, which suffices to prove
Theorems~\ref{Almost regular} and~\ref{Minimum Degree}(ii).
To achieve the transformation of each $2$-factor into a Hamilton cycle,
we claim that it is enough to prove the
following theorem.
In conditions~(iv) and~(v) of the theorem we say that a pair of clusters $(A_i,A_j)$ of a graph $X$ is
\emph{weakly $(\eps,\eps')$-regular} in a subgraph $H$ of $X$ if
for every $U \subseteq A_i, V \subseteq A_j$ with $|U|,|V| \geq
\eps m$, there are at least $\eps'  m^2$ edges
between $U$ and $V$ in $H$.

Roughly speaking, we will apply the following theorem successively to the
$2$-factors $F$ in our almost-decomposition of $S_i$ and where  $H$ is the union of $H_{3,i}$
together with some additional edges incident to $V_{0i}$ .
However, this does not quite work -- between successive applications of the
theorem we will also need to add edges to $H$ which were removed from a previous $1$-factor $F$
when transforming $F$ into a Hamilton cycle.

\begin{theorem}\label{Transforming 2-factors into Hamilton cycles}
Let $1/n\ll 1/k\le \eps \ll \beta \ll \gamma\ll 1$. Let $m$
be an integer such that $(1-\eps)n \le mk\le n$.
Let $H$ be a graph on $n$ vertices and let $F$ be a $2$-factor so that $F$ and $H$ have the same
vertices but are edge-disjoint. Let $X:=F \cup H$.
Let $A_1,\ldots,A_k$ be disjoint subsets of $X$ of size $(1 -
2\eps)m$ and
let $B_1,\ldots,B_{k'},D_1,\ldots,D_{k'}$ be another
enumeration of the  $A_1,\ldots,A_k$. Suppose also that the following
hold:
\begin{itemize}
\item[(i)] $F$ contains at most $n/(\log n)^{1/5}$ cycles;
\item[(ii)] For each $1 \leqslant i \leqslant k'$ and for each vertex of
 $B_i$ the number of $H$-neighbours in $D_i$ having an $F$-neighbour outside $B_i$ is
at most $2\gamma^3 \beta m$ (and similarly for the vertices in $D_i$);
\item[(iii)] For every $1 \leqslant i \leqslant k'$, the pair
$(B_{i},D_{i})_H$ is $(3\eps,\gamma \beta /6)$-super-regular;
\item[(iv)] For every $1 \leqslant i \leqslant k$ and every $A_i$, there are at least
$(1 + \alpha) k'$ distinct $j$'s with $1 \leq j \leq k$ such that
$(A_i,A_j)$ is weakly $(\eps,\eps^3/k)$-regular in $H$;
\item[(v)] For every $1 \leq i < j \leq k$, if there is
an edge in $X$ between $A_i$ and $A_j$ then $(A_i,A_j)$ is weakly $(\eps,\eps^3/k)$-regular in
$H$;
\item[(vi)] For every vertex $x \in V(X) \setminus (A_1 \cup \ldots \cup
A_k)$, both $F$-neighbours of $x$ belong to $A_1 \cup \ldots \cup A_k$.
\item[(vii)] Every vertex $x \in V(X) \setminus (A_1 \cup \ldots \cup
A_k)$ has degree at least $\alpha n/6$ in $H$ and every $H$-neighbour of $x$ lies
in $A_1 \cup \ldots \cup A_k$.
\end{itemize}
Then there is a Hamilton cycle $C$ of $X$ such that  $|E(C) \triangle E(F)| \leq 25 n/(\log n)^{1/5}$.
\end{theorem}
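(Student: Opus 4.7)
The plan is to apply the rotation-extension technique: break an initial $F$-cycle into a path, iteratively absorb the remaining $F$-cycles into it via $H$-edges incident to the endpoints, and then close the resulting Hamilton path into a Hamilton cycle by a two-sided P\'osa rotation. Each absorption swaps one $F$-edge for one $H$-edge, adding $2$ to the symmetric difference $|E(C)\triangle E(F)|$; by (i) the number of $F$-cycles is at most $n/(\log n)^{1/5}$, so absorptions contribute at most $2n/(\log n)^{1/5}$ and there is ample room in the budget of $25n/(\log n)^{1/5}$ for the additional edge swaps caused by rotations.

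To begin, write $V_0 := V(X)\setminus (A_1\cup\cdots\cup A_k)$, so $|V_0|\le 2\eps n$. I pick a cycle $C_0$ of $F$ containing an edge $e$ between two cluster vertices (such a cycle exists because by (i) some $F$-cycle has length at least $(\log n)^{1/5}$ while $|V_0|\ll n$) and delete $e$ to obtain the initial path $P_0$ with cluster endpoints. In the extension phase, given a current path $P$ with endpoints $x,y$: if some endpoint, say $x$, has an $H$-neighbour $z$ on an $F$-cycle $C$ disjoint from $P$, I \emph{absorb} $C$ by deleting an $F$-edge $zz'$ of $C$ at $z$ and adding the $H$-edge $xz$, producing a new path covering $V(P)\cup V(C)$ with endpoint $z'$. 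Otherwise I perform a P\'osa rotation from $x$, swapping an edge of $P$ at the $x$-end for an $H$-edge from $x$ to a vertex on $P$ so that the endpoint changes, and then retry. The crucial quantitative point, which I would prove from the lower bounds on $d_H$ in (iii) and (vii) together with the weakly regular pairs in (iv), is that while $|V(X)\setminus V(P)|\ge \eps n$ an endpoint always has an $H$-neighbour outside $V(P)$, so extensions proceed without any rotation; rotations are only needed in the ``tail'' phase when at most $\eps n$ vertices remain outside $P$, and there P\'osa's argument applied to the quasi-random structure (iv)--(v) grows the set of reachable endpoints geometrically, so only a bounded number of rotations per remaining absorption suffice.

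Once $P$ is a Hamilton path, I close it by the standard two-endpoint rotation argument: grow endpoint sets $S_x,S_y$ until an $H$-edge between $S_x$ and $S_y$ is forced by (iv)--(v) and the super-regularity in (iii), which takes $O(\log n)$ further rotations. The main obstacle throughout is the accounting of the symmetric difference, since a rotation typically also changes $|E(P)\triangle E(F)|$ by $2$; condition (ii) is what rescues it, because when one rotates using an $H$-edge $xz$ with $x\in B_i$ and $z\in D_i$, the $F$-edge $zz'$ deleted typically has $z'\in B_i$, so rotations remain localised inside the pair $(B_i,D_i)$ and never cascade across clusters. Conditions (vi) and (vii) deal with the exceptional vertices: their large $H$-degree lets them either be absorbed directly during an ordinary extension step, or be skirted around by a single additional rotation when they would otherwise become an endpoint. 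With these controls the total rotation count is $O(n/(\log n)^{1/5})$, yielding $|E(C)\triangle E(F)|\le 2n/(\log n)^{1/5} + O(n/(\log n)^{1/5})\le 25n/(\log n)^{1/5}$ as required.
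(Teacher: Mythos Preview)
Your central quantitative claim --- that while $|V(X)\setminus V(P)|\ge \eps n$ an endpoint of $P$ always has an $H$-neighbour outside $V(P)$ --- is not supported by the hypotheses, and in fact fails. The only lower bound you have on the $H$-degree of a cluster vertex $x\in B_i$ comes from~(iii): it guarantees roughly $\gamma\beta m/6$ $H$-neighbours of $x$, \emph{all of them in $D_i$}. Nothing in (iv) or (v) gives a single vertex any $H$-neighbour in another cluster; weak $(\eps,\eps^3/k)$-regularity is a statement about edge counts between $\eps m$-sized sets, not about individual vertices. So once $D_i\subseteq V(P)$ --- which can happen while almost all of the other $k-2$ clusters are still untouched --- your endpoint $x$ may have no $H$-neighbour outside $P$ at all.

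Your fallback, P\'osa rotation, does not rescue this. As you yourself observe (and as (ii) forces), a rotation from $x\in B_i$ through an $H$-neighbour $z\in D_i$ almost always produces a new endpoint back in $B_i$; you are trapped in the pair $(B_i,D_i)$. To escape you would need to grow a reachable endpoint set $R\subseteq B_i$ of size $\ge\eps m$ and then invoke weak regularity to find an $H$-edge from $R$ to some partner cluster $A_j$ with $|A_j\setminus V(P)|\ge\eps m$. But (iv) only gives $(1+\alpha)k'$ such partners out of $2k'$ clusters, so once $P$ has swallowed more than about $(1+\alpha)k'$ clusters --- roughly half the graph --- every weak-regular partner of $B_i$ may already lie inside $P$, and you are stuck. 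This is precisely why the paper does \emph{not} grow a single path greedily: it first merges cycles \emph{within} each pair $(B_i,D_i)$ (Claims~1--4, using the Rotation--Extension Lemma to close paths with at most $5$ edge swaps each), then applies Ghouila--Houri's theorem to an auxiliary digraph on $[k']$ recording weak regularity between pairs to obtain a global cyclic ordering of the pairs (Claim~5), and finally threads the per-pair cycles together along that ordering (Claim~6). The Ghouila--Houri step is exactly what circumvents the obstruction your approach runs into.
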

To see that it is enough to prove the above theorem, suppose we have
already transformed all 2-factors of $S_1,\ldots,S_{i-1}$ guaranteed
by Lemma~\ref{Properties of 2-factor decomposition} into edge-disjoint Hamilton
cycles such that for each $1\leq j\leq i-1$ the Hamilton cycles
corresponding to the $2$-factors of $S_j$ lie in $G \setminus
\bigcup_{j' > j} \left(G_{j'}\cup H_{3,j'} \right)$. Moreover,
suppose that we have also transformed $\ell$ of the $2$-factors of
$S_i$, say $F_1,\ldots, F_{\ell}$, into edge-disjoint Hamilton cycles
$C_1,\ldots,C_{\ell}$ such that $C_j\subseteq G \setminus
\bigcup_{i' > i} \left(G_{i'}\cup H_{3,i'} \right)$ and $|E(C_j)
\triangle E(F_j)| \leq 25 n/(\log n)^{1/5}$ for all $1\leq j\leq
\ell$. Obtain $H^*_1$ from $H_{3,i}$ as follows:
\begin{itemize}
\item[($b_0$)] add all those edges of $G$ between $V_{0i}$ and $V(G)\setminus V_{0i}$
which do not belong to any
$G_j \cup H_{3,j}$ with $j \geq i$ or to any Hamilton cycle already created.
\end{itemize}
Suppose that we have inductively defined graphs $H^*_1,\dots,H^*_{\ell}$
such that $C_j\subseteq H^*_j\cup F_j$ for all $1\le j\le \ell$. Define $H^*_{\ell+1}$
as follows:
\begin{itemize}
\item[($b_1$)] remove all edges in $E(C_\ell)\setminus E(F_\ell)$ from $H^*_\ell$;
\item[($b_2$)] add all edges in $E(F_\ell)\setminus E(C_\ell)$ to $H^*_\ell$.
\end{itemize}
Let $F_{\ell+1}$ be one of the 2-factors of $S_i$ as constructed in
Lemma~\ref{Properties of 2-factor decomposition} which is distinct
from $F_1, \ldots, F_{\ell}$. Finally, let $B_j = U_{j,i}$ and $D_j
= V_{j,i}$ for $1 \leq j \leq k'$. We claim that all conditions of
Theorem~\ref{Transforming 2-factors into Hamilton cycles} hold (with
$H^*_{\ell+1}$ and $F_{\ell+1}$ playing the roles of $H$ and $F$).
Indeed, property (i) follows from Lemma~\ref{Properties of 2-factor
decomposition}(i). Since $N_{H^*_{\ell+1}}(u)\cap V_{j,i}\subseteq
(N_{H_{3,i}}(u) \cup N_{S_i}(u))\cap V_{j,i}$ for every $u\in
U_{j,i}$ (note that this is not necessarily true for $u \in
V_{0,i}$), property~(ii) follows from Lemma~\ref{Properties of
2-factor decomposition}(ii) and~(iii). To see that property (iii)
holds, recall that by Lemma~\ref{Distribution of edges of H_3 within
the G_i's}(ii) we have that for every $1 \leqslant j' \leqslant k'$
the pair $(B_{j'},D_{j'})_{H_{3,i}}$ is $(5\eps/2,\gamma \beta
/5)$-super-regular. Since also $|E(C_j) \triangle E(F_j)| \leq 25
n/(\log n)^{1/5}$ for each $1 \leq j \leq \ell$, we have
$|E(H^*_{\ell+1}\setminus V_{0i}) \triangle E(H_{3,i}\setminus
V_{0i})| \leq 25 n^2/(\log n)^{1/5}$ and so
$(B_{j'},D_{j'})_{H^*_{\ell+1}}$ is $3\eps$-regular of density at
least $\gamma \beta /6$. To prove that the pair is  even
$(3\eps,\gamma \beta/6)$-super-regular, it suffices to show that for
any $x \in B_{j'}$ we have
\begin{equation} \label{superreg}
|N_{H^*_{\ell+1}}(x) \cap D_{j'}| \ge \gamma \beta m/6.
\end{equation}
(A bound for the case $x \in D_{j'}$ will follow in the same way.)
To prove~(\ref{superreg}), suppose that the degree of $x$ in $(B_{j'},D_{j'})_{H^*_{\ell+1}}$
was decreased by one compared to $(B_{j'},D_{j'})_{H^*_\ell}$ due to~($b_1$).
This means that an edge $xy$ of $(B_{j'},D_{j'})_{H^*_\ell}$ was inserted into $C_\ell$.
But since $F_\ell$ and $C_\ell$ are both $2$-factors, this means that
an edge $xz$ from $F_\ell$ will be added to $H^*_{\ell}$ when forming $H^*_{\ell+1}$.
Note that $xz \in E(F_\ell) \subseteq E(S_i)$ and
by our assumption on the degree of $x$,  we have $z \notin D_{j'}$.
If the degree decreases by two of $x$, then the argument shows that
we will be adding two such edges $xz_1$ and $xz_2$ to $H^*_\ell$ when forming $H^*_{\ell+1}$.
But since Lemma~\ref{Construction of S_i}(ii) implies that
$|N_{S_i}(x) \setminus D_{j'}| \le \eta \beta m$, this can happen at most
$\eta \beta m$ times throughout the process of constructing $C_1,\dots,C_\ell$.
(Here we are also using the fact that the $F_j$ are edge-disjoint, so we will consider
such an edge $xz$ or $xz_i$ only once throughout.)
So
$$
|N_{H^*_{\ell+1}}(x) \cap D_{j'}| \ge  |N_{H_{3,i}}(x) \cap D_{j'}| -\eta \beta m
\ge \gamma \beta (1-2\eps)m/5 -\eta \beta m\ge \gamma \beta m/6,
$$
which proves~(\ref{superreg}) and thus (iii). Property (iv) follows from
Lemma~\ref{Distribution of edges of H_3 within the G_i's}(i) together
with the fact that $|E(H_{3,i}\setminus V_{0,i}) \triangle E(H^*_{\ell+1}\setminus V_{0i})| = o(n^2)$ and
the fact that the minimum degree of $R$ is at least
$(1+\alpha)k/2\beta$ (see Lemma~\ref{Degrees of R}).
Property (v) follows similarly since by~($a''_4$) each edge in
$E(F_{\ell+1})\subseteq E(G_i)$ between clusters corresponds to an edge of $R$ and since by
Lemma~\ref{Distribution of edges of H_3 within the G_i's}(iv) the analogue holds for
the edges of $H_{3,i}$. Property (vi) is an immediate
consequence of $(a_4'')$. To see that (vii) holds consider a vertex
$x \in V(X) \setminus (A_1 \cup \ldots \cup A_k)$.
By Lemma~\ref{Properties after definition of H's}(i) $x$ has degree at most $2\gamma n$ in $H_3$
and thus in the union of $H_{3,j}$ with $j \ge i$.
By $(a_{2.1}')$, $x$ has degree at most $(r-i+1)(1+15\gamma)\beta m$ in the union of the  $G_j$
with $j \ge i$.
The number of Hamilton cycles already constructed is at most $i(1-\sqrt{\gamma})\beta m/2$.
Furthermore, $x$ has at most $|V_{0i}|\le 3\eps n$ neighbours in $V_{0i}$.
So altogether the number of edges of $G$
incident to $x$ which are not included in $H^*_{\ell+1}$ due to ($b_0$) and ($b_1$)
is at most $2\gamma n+(r+1) (1+15\gamma) \beta m +3\eps n\le \delta -\alpha n/6$, where the inequality follows
from the bound on $\delta$ in (\ref{lowerr}). So the number of edges incident to $x$
in~$H^*_{\ell+1}$ is at least $\alpha n/6$. Moreover, by
Lemma~\ref{Distribution of edges of H_3 within the G_i's}(iv) and~($a''_4$) no neighbour
of $x$ in $H_{3,i}\cup G_i$ lies in $V_{0i}$ and thus the same is true
for every $H^*_{\ell+1}$-neighbour of $x$.

\subsection{Proof of Theorem~\ref{Transforming 2-factors into Hamilton
cycles}}\label{sec:finalpf}

In the proof of Theorem~\ref{Transforming 2-factors into Hamilton
cycles} it will be convenient to use the following special case of a
theorem of Ghouila-Houri~\cite{Ghouila-Houri60}, which is an analogue of Dirac's theorem for
directed graphs.%
\COMMENT{ makes the proof slightly longer but hopefully more transparent}
\begin{theorem}[\cite{Ghouila-Houri60}] \label{ghouila}
Let $G$ be a directed graph on $n$ vertices with minimum out-degree
and minimum in-degree at least $n/2$. Then $G$ contains a directed
Hamilton cycle.
\end{theorem}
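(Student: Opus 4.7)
The plan is to iteratively merge the cycles of $F$ into a single Hamilton cycle of $X = F \cup H$ using a rotation-extension argument, where each cycle-merging step contributes only $O(1)$ to $|E(C) \triangle E(F)|$. Since by (i) $F$ has at most $c := n/(\log n)^{1/5}$ cycles, this will yield the target bound of $25n/(\log n)^{1/5}$ with enough slack to accommodate the auxiliary rotations that may be needed both to realize each merge and to close the final Hamilton cycle.

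First I would pick an arbitrary cycle of $F$, delete one of its $F$-edges to form a path $P$, and then iterate the following \emph{absorption} step. Given the current path $P$ with endpoints $x, y$ and the collection $\mathcal{U}$ of cycles of $F$ vertex-disjoint from $V(P)$, seek an $H$-edge, say $yv$, from an endpoint of $P$ to a vertex $v$ lying on some cycle $K \in \mathcal{U}$; absorb $K$ by appending $v$ and then traversing $K$ back to the $F$-neighbour $v'$ of $v$ while deleting the $F$-edge $vv'$. Each such absorption contributes exactly $2$ to $|E(C) \triangle E(F)|$: one $H$-edge added and one $F$-edge removed. If no extending $H$-edge currently exists from either endpoint into any member of $\mathcal{U}$, I would perform a P\'osa-style rotation using an $H$-edge from an endpoint to an interior vertex of $P$, which swaps a single $F$-edge for a single $H$-edge but enlarges the set of vertices attainable as endpoints. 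The super-regularity of $(B_i, D_i)_H$ in (iii), the weak $(\eps, \eps^3/k)$-regularity between $H$-adjacent clusters in (iv)--(v), condition (ii) controlling the local $F$-structure inside each super-regular pair, and the degree lower bound $\alpha n/6$ for exceptional vertices in (vii) together show that only $O(1)$ rotations are needed per absorption on average, so after $c - 1$ absorptions all cycles have been merged into a single Hamilton path.

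It remains to close this Hamilton path $P$ into a Hamilton cycle. I would use the same rotation-extension machinery: iteratively grow the set of attainable endpoints by rotations along $H$-edges, until quasi-randomness of $H$ inside the super-regular pairs forces two simultaneously attainable endpoints to be $H$-adjacent, at which point the closing edge completes the Hamilton cycle. Theorem~\ref{ghouila} is naturally invoked at this closure stage, or at an analogous intermediate stage: on an auxiliary directed graph whose vertices are endpoint configurations (or, at a coarser level, the cycles of $F$ themselves) and whose arcs encode legal rotate-and-reconnect moves, hypotheses (iii), (iv), and (vii) force minimum in- and out-degree above half the order, so a directed Hamilton cycle furnished by Theorem~\ref{ghouila} yields a bounded-length closing sequence of rotations.

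The main obstacle will be verifying the claim of $O(1)$ average rotations per absorption. One must avoid situations in which, once $V(X) \setminus V(P)$ has shrunk, the $H$-neighbourhood of every attainable endpoint is contained in $V(P)$ despite several small cycles of $F$ still being unabsorbed. The main leverage will come from (iv): each cluster $A_i$ is weakly $(\eps, \eps^3/k)$-regular in $H$ to strictly more than half of the other clusters, so that the $H$-neighbourhood of any typical endpoint meets every large vertex subset. Condition (ii) prevents absorptions from degrading the super-regular pair structure on which later extensions depend, and condition (vii) ensures that whenever an exceptional vertex appears as an endpoint, extension is essentially automatic since its $H$-degree is already at least $\alpha n/6$.
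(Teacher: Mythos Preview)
Your proposal does not prove the stated theorem at all. Theorem~\ref{ghouila} is Ghouila-Houri's classical directed analogue of Dirac's theorem, and the paper does not prove it --- it is simply quoted from~\cite{Ghouila-Houri60} as a known tool. What you have sketched is instead an outline for Theorem~\ref{Transforming 2-factors into Hamilton cycles} (transforming a $2$-factor $F$ into a Hamilton cycle using $H$), in which Ghouila-Houri appears only as an ingredient. So the proposal is addressed to the wrong statement.

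Even read as a sketch for Theorem~\ref{Transforming 2-factors into Hamilton cycles}, your outline diverges substantially from the paper's argument and has a real gap. The paper does not run a generic P\'osa rotation with an ``$O(1)$ rotations per absorption on average'' claim; rather it proceeds through a structured sequence of claims: first ensuring each super-regular pair $(B_i,D_i)$ is largely contained in a single cycle (Claims~1--4), then invoking Theorem~\ref{ghouila} on the auxiliary digraph whose vertices are the \emph{indices} $1,\dots,k'$ with an arc $i\to j$ when $(B_i,D_j)$ is weakly regular in~$H$ (Claim~5), and finally threading a Hamilton path through the clusters in the resulting cyclic order (Claim~6) before closing up via Lemma~\ref{Rotation-Extension}. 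Your proposed use of Ghouila-Houri on ``endpoint configurations'' is quite different and is not justified: you never specify the auxiliary digraph, and the assertion that hypotheses (iii), (iv), (vii) force minimum semidegree above half its order is unsupported. Likewise, the key claim that only $O(1)$ rotations suffice per absorption is exactly the hard part, and nothing in your outline explains how to avoid the situation you yourself flag, where all attainable endpoints have their $H$-neighbourhoods trapped inside $V(P)$ while small cycles remain unabsorbed.
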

We will also use the following
`rotation-extension' lemma which appears implicitly
in~\cite{Frieze&Krivelevich05} and explicitly (but for directed
graphs) in~\cite{Kuhn&Osthus&Treglown}. The directed version implies
the undirected version (and the latter is also simple to prove directly).
Given a path $P$ with endpoints in opposite clusters of an $\eps$-regular pair,
the lemma provides a cycle on the same vertex set by changing only a small number of
edges.

\begin{lemma}\label{Rotation-Extension}
Let $0 < 1/m \ll \eps \ll \gamma' < 1$ and let $G$ be a graph on $n
\geq 2m$ vertices. Let $U$ and $V$ be disjoint subsets of $V(G)$
with $|U| = |V| = m$ such that for every $S \subseteq U$ and every
$T \subseteq V$ with $|S|,|T| \geq \eps m$ we have $e(S,T) \geq
\gamma' |S||T|$. Let $P$ be a path in $G$ with endpoints $x$ and $y$
where $x \in U$ and $y \in V$. Let $U_P$ be the set of vertices of
$P$ which belong to $U$ and have all of their $P$-neighbours in $V$
and let $V_P$ be defined analogously. Suppose that $|N(x) \cap
V_P|,|N(y) \cap U_P| \geq \gamma' m$. Then there is a cycle $C$ in
$G$ containing precisely the vertices of $P$ and such that $C$
contains at most $5$ edges which do not belong to $P$.
\end{lemma}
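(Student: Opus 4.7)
The plan is the classical Pósa rotation--extension. Writing $P = p_1 p_2 \cdots p_t$ with $p_1 = x$ and $p_t = y$, let $X := N(x) \cap V_P \subseteq V$ and $Y := N(y) \cap U_P \subseteq U$; by hypothesis $|X|, |Y| \geq \gamma' m$. For each $v = p_i \in X$ (with $i \geq 3$), the \emph{rotation} that replaces the $P$-edge $p_{i-1}p_i$ by the edge $xv$ produces a Hamilton path on $V(P)$ with endpoints $p_{i-1}$ (which lies in $U$ because $v \in V_P$) and $y$; symmetrically, for $u = p_j \in Y$ (with $j \leq t-2$), rotating at $y$ along $yu$ produces a Hamilton path with endpoints $x$ and $p_{j+1} \in V$.

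The core of the proof is a controlled double rotation. I would show that if $v = p_i \in X$ and $u = p_j \in Y$ satisfy $j \geq i + 1$, then performing first the $x$-rotation along $xv$ and then the $y$-rotation along $yu$ yields a Hamilton path $P''$ on $V(P)$ with endpoints $p_{i-1} \in U$ and $p_{j+1} \in V$, using exactly two non-$P$ edges ($xv$ and $yu$) and deleting two $P$-edges. The only point to verify is that the second rotation is undisturbed by the first: the $x$-rotation modifies $P$-neighbourhoods only of $p_1, p_{i-1}, p_i$, and the condition $j \geq i+1$ ensures $u \notin \{p_1, p_{i-1}, p_i\}$, so $u$ still has both neighbours in $V$ in the rotated path, and the $y$-rotation produces precisely the expected new endpoint $p_{j+1}$. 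Any edge of $G$ from $p_{i-1}$ to $p_{j+1}$ then closes $P''$ into a Hamilton cycle $C$ on $V(P)$ with exactly three non-$P$ edges, which is well within the allowed bound of five.

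To produce such a pair $(v, u)$ whose endpoints are joined by an edge of $G$ I would apply the density hypothesis after a median split. Let $i^*$ and $j^*$ be the medians of the index sets $\{i : p_i \in X\}$ and $\{j : p_j \in Y\}$; since $X \subseteq V$ and $Y \subseteq U$ are disjoint we have $i^* \neq j^*$. In the case $i^* < j^*$, the sets $X_1 := \{p_i \in X : i \leq i^*\}$ and $Y_2 := \{p_j \in Y : j \geq j^*\}$ each have size at least $\gamma' m / 2$, and any $v \in X_1, u \in Y_2$ satisfies $j \geq j^* > i^* \geq i$, hence $j \geq i + 1$ as needed. The corresponding rotated-endpoint sets $X_1^* := \{p_{i-1} : p_i \in X_1\} \subseteq U$ and $Y_2^+ := \{p_{j+1} : p_j \in Y_2\} \subseteq V$ still have size at least $\gamma' m/2 - 1 \geq \eps m$ (using $\gamma' \gg \eps$ and $1/m \ll \eps$), so the density hypothesis guarantees an edge $p_{i-1}p_{j+1} \in E(G)$ between them, supplying the closing edge. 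The case $j^* < i^*$ is handled by the symmetric argument, taking $X_H := \{p_i \in X : i \geq i^*\}$ and $Y_L := \{p_j \in Y : j \leq j^*\}$; the same type of double rotation now produces endpoints $(p_{i-1}, p_{j-1})$, and density applied to $X_H^*$ and $Y_L^- := \{p_{j-1} : p_j \in Y_L\}$ furnishes the closing edge. (In the borderline sub-case $j = i-1$ the closing edge coincides with a deleted $P$-edge and one obtains a cycle with only two non-$P$ edges, which is also fine.)

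The main technical obstacle is exactly this coordination of the two rotations: the second rotation must see an unaltered local structure at $u$ so that it delivers the expected new endpoint, and the two resulting endpoints must land in opposite clusters $U, V$ so that the density hypothesis can be invoked to produce the closing edge. The median-splitting trick addresses both issues simultaneously and allows the entire transformation to be carried out at a cost of at most $3$ non-$P$ edges, which leaves comfortable slack against the bound of $5$ in the statement.
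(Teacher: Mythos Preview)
The paper does not supply its own proof of this lemma; it merely cites \cite{Frieze&Krivelevich05} and \cite{Kuhn&Osthus&Treglown} and notes that the undirected version ``is also simple to prove directly.'' Your argument provides exactly such a direct proof and is essentially correct: the double rotation combined with the median split produces two large endpoint sets $X_1^* \subseteq U$ and $Y_2^+ \subseteq V$ (or $X_H^* \subseteq U$ and $Y_L^- \subseteq V$ in the symmetric case) of size at least $\gamma' m/2 - 1 \geq \eps m$, to which the density hypothesis applies, yielding a cycle with at most three non-$P$ edges --- comfortably within the stated bound of five.

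Two minor remarks. First, your description of the borderline sub-case $j = i-1$ is slightly off: the edge $p_{i-1}p_{j-1} = p_{i-1}p_{i-2}$ is indeed a $P$-edge, but it is never deleted; what actually happens is that after the first rotation the vertex $p_j = p_{i-1}$ is already an endpoint of the rotated path, so adding $yp_{i-1}$ closes the cycle directly with only two non-$P$ edges. Second, the exclusions $i = 2$ and $j = t-1$ (degenerate rotations) and the automatic exclusions coming from the definition of $U_P, V_P$ (which force $p_2 \notin U_P$ since $p_1 = x \in U$, and similarly $p_{t-1} \notin V_P$) are handled somewhat informally, but the slack in $\gamma' m/2 - 1 \geq \eps m$ absorbs all of this without difficulty.
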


\begin{proof}[Proof of Theorem~\ref{Transforming 2-factors into Hamilton
cycles}]

We will give an algorithmic construction of the Hamilton cycle.
Before and after each step of our algorithm we will have a spanning
subgraph $H'$ of $H$ and spanning subgraph $F'$ of $X$ which is a
union of disjoint cycles and at most one path such that $H'$ and
$F'$ are edge-disjoint. In each step we will add at most 5 edges
from $H'$ to $F'$ and remove some edges from $F'$ to obtain a new
spanning subgraph $F''$. The edges added to $F'$ will be removed
from $H'$ to obtain the new subgraph $H''$. It will turn out that
the number of steps needed to transform $F$ into a Hamilton cycle
will be at most $5 n/(\log n)^{1/5}$. This will complete the proof
of Theorem~\ref{Transforming 2-factors into Hamilton cycles}.

To simplify the notation we will always write $H$ and $F$ for these
subgraphs of $X$ at each step of the algorithm.
Also, let $g(n):=n/(\log n)^{1/5}$.
We call all the edges of the initial~$F$ \emph{original}.
At each step of the algorithm, we will write $B_i'$ for the set of
vertices $b\in B_i$ whose neighbours in the current graph $F$ both lie in $D_i$ and are joined to~$b$
by original edges (for each $1 \leq i \leq k'$).%
   \COMMENT{need this condition on the original edges to ensure that there is never
a vertex which belongs to $B'_i$ after some step but didn't belong to $B'_i$ before
this step. So dealing with $B'_{i+1}$ in Claim~1 (say) won't affect what we have done for
$B'_i$}
We define $D_i'$ similarly. So during the
algorithm the size of each $B'_i$ might decrease, but since we delete at most $25 g(n)$
edges from the initial $F$ during the algorithm, all but at most $50 g(n)$ vertices
of the initial $B'_i$ will still belong to this set at the end of the algorithm
(and similarly for each $D'_i$).

Since at each step of the algorithm the current $F$ differs from the initial one by at most
$25 g(n)$ edges (and so at most $25 g(n)$ edges have been removed from the initial~$H$),
we will be able to assume that at each step of the algorithm the following conditions
hold.
\begin{itemize}
\item[(a)] For each $1 \leqslant i \leqslant k'$ each vertex of
$B_i$ has at most $3\gamma^3 \beta  m$  $H$-neighbours in $D_i\setminus D'_i$
(and similarly for the vertices in $D_i$);
\item[(b)] For every $1 \leqslant i \leqslant k'$, the pair
$(B_{i},D_{i})_H$ is $(4\eps,\gamma \beta /7)$-super-regular;
\item[(c)] For every $1 \leqslant i \leqslant k$ and every $A_i$, there are at least
$(1 + \alpha) k'$ distinct $j$'s with $1 \leq j \leq k$ such that
$(A_i,A_j)$ is weakly $(\eps,\eps^3/2k)$-regular in $H$;
\item[(d)] For every $1 \leq i < j \leq k$, if there is
an edge in $X$ between $A_i$ and $A_j$ then
$(A_i,A_j)$ is weakly $(\eps,\eps^3/2k)$-regular in $H$;
\item[(e)] Every vertex $x \in V(X) \setminus (A_1 \cup \ldots \cup
A_k)$ has degree at least $\alpha n/7$ in $H$ and all $H$-neighbours
of $x$ lie in $A_1 \cup \ldots \cup A_k$.
\end{itemize}

Note that by (a) and (b) we always have
\begin{equation} \label{bidi}
|B_i'|,|D_i'| \geq (1-\gamma)m.
\end{equation}%
To see this, suppose that initially we have $|B_i \setminus B_i'| \ge \gamma m/2$.%
\COMMENT{need $/2$ here as cluster size is not quite $m$} Then by
(b) there is a vertex $x \in D_i$ which has at least $\gamma^2 \beta
m/20 > 3\gamma^3 \beta m$ $H$-neighbours in $B_i \setminus B_i'$,
contradicting (a). So~\eqref{bidi} follows since we have already
seen that all but at most $50g(n)$ vertices of the original set
$B'_i$ still belong to $B'_i$ at the end of
the algorithm.%
\COMMENT{this can also be proved in the same way as (a) but the current method seems simple enough.}

\medskip

\noindent \textbf{Claim 1.} \emph{After at most $g(n)$ steps, we may assume that
$F$ is still a $2$-factor and that for each $1 \leq i \leq k'$ there
is a cycle $C_i$ of $F$ which contains at least $\gamma \beta m/9$
vertices of $B_i'$ and at least $\gamma \beta m/9$ vertices of
$D_i'$.}

\smallskip

\noindent

Note that we may have $C_i = C_j$ even if $i \neq j$ (and similarly in the
later claims).
To prove the claim, suppose that $F$ does not contain such a cycle
$C_i$ for some given $i$. Let $C$ be a cycle of $F$ which contains
an edge $xy$ with $x \in B_i$ and $y \in D_i$.
Note that such a cycle exists by (\ref{bidi}).
Consider the path $P$ obtained from $C$ by
removing the edge $xy$. If $x$ has an $H$-neighbour $y'$ on another
cycle $C'$ of $F$ such that $y'$ has an $F$-neighbour $x'$
with $x' \in B_i$ then we replace the path $P$ and the cycle $C'$
with the path $x'C'y'xPy$. (Note that $x'$ will be one of the neighbours
of $y'$ on $C'$.) We view the construction of this path as carrying out one step of
the algorithm. Observe that we have only used one edge
from $H$ and we have reduced the number of cycles of $F$ by 1 when extending $P$. Let
us relabel so that the unique path of $F$ is called $P$ and its
endpoints $x$ and $y$ belong to $B_i$ and $D_i$ respectively.
Repeating this extension step for as long as possible, we may assume
that no $H$-neighbour of $x$ which is not on $P$ has an
$F$-neighbour in $B_i$ and similarly no $H$-neighbour of $y$ which
is not on $P$ has an $F$-neighbour in $D_i$. In particular, by (a) and (b),
$x$ has at least $\gamma \beta m/8$ $H$-neighbours in $V(P)\cap D'_i$, and similarly $y$
has at least $\gamma \beta m/8$ $H$-neighbours in $V(P)\cap B'_i$.
By Lemma~\ref{Rotation-Extension} (applied with $U:=B_i$, $V:=D_i$ and $G:=X$)
it follows that we can use at most 5 edges of $H$ to convert $P$
into a cycle $C_i$ (we view this as another step of the algorithm).
Note that $C_i$ satisfies the conditions of the claim.%
    \COMMENT{When applying Lemma~\ref{Rotation-Extension}, $|B'_i|$ and $|D'_i|$
might decrease by at most 10. So after this step we have a cycle containing at least
$\gamma \beta m/8-10$ vertices of $B'_i$ and at least $\gamma \beta m/8-10$ vertices of $D'_i$.
In each subsequent step (ie when we consider $i+1,i+2,\dots$) we might loose 10 further
vertices in both $B'_i$ and $D'_i$. But since we are doing at most $g(n)$ steps, at the end
we will still have a cycle containing at least $\gamma \beta m/9$ vertices from $B'_i$
and at least $\gamma \beta m/9$ vertices from $D'_i$. (However, we don't want to put all
this into the paper -- anyone who notices it will know how to fix it...)}
Since the number of cycles in $F$ is initially at most $g(n)$ and since a Hamilton cycle certainly
would satisfy the claim, the number of steps can be at most $g(n)$.

\medskip

\noindent \textbf{Claim 2.} \emph{After at most $g(n)$ further steps, we may assume
that $F$ is still a $2$-factor and that for each $1 \leq i \leq k'$
there is a cycle $C_i'$ of $F$ which contains all but at most
$4\eps m$ vertices of $B_i'$ and all but at most $4\eps m$ vertices
of $D_i'$.}

\smallskip

\noindent

Let $C_i$ be a cycle of $F$ which contains at least $\gamma \beta
m/9$ vertices of $B_i'$ and at least $\gamma \beta m/9$ vertices
of $D_i'$. Suppose there are at least $4 \eps m$ vertices of $B_i'$
not covered by $C_i$. Then (b) implies that there is a vertex $b \in
B_i'$, which is not covered by $C_i$ and a vertex $d \in D_i'$ which
is covered by $C_i$ such that $b$ and $d$ are neighbours in $H$. Let
$C'$ be the cycle containing $b$ and let $x$ be any neighbour of $b$
on $C'$ and $y$ any neighbour of $d$ on $C_i$. Then removing the
edges $bx$ and $dy$ and adding the edge $bd$ we obtain the path
$xC'bdC_iy$ (see Figure~\ref{Rotation1}).
\begin{figure}[h]
\includegraphics[scale=1]{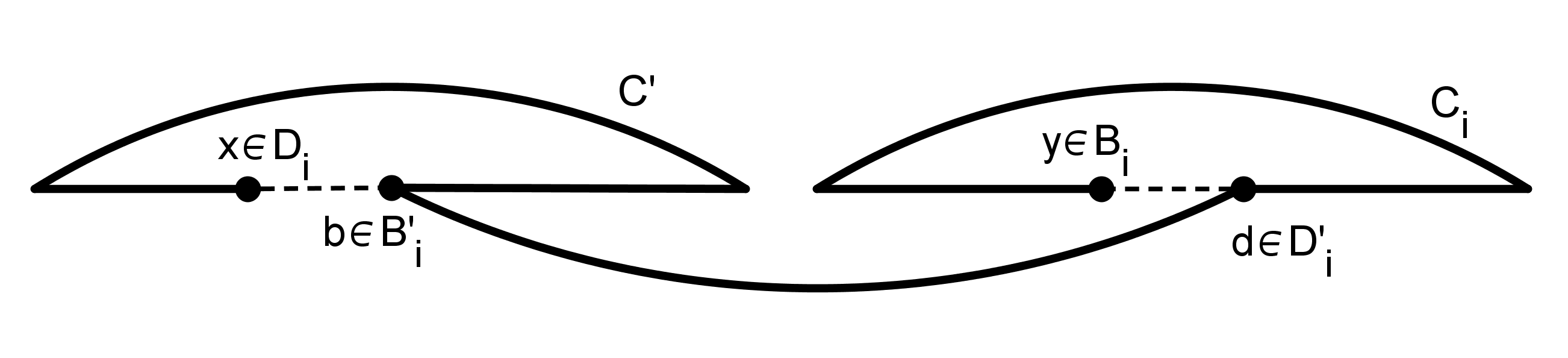}
\caption{Extending $C_i$ to include more vertices from $B_i' \cup
D_i'$.} \label{Rotation1}
\end{figure}

Since $x \in D_i$ and $y \in B_i$ (as $b\in B'_i$ and $d\in D'_i$)
we can repeat the argument in the
previous claim to extend this path into a larger path if necessary
and then close it into a cycle. As long there are at least $4\eps m$
vertices of $B_i'$ not covered by the cycle or at least $4\eps m$
vertices of $C_i'$ not covered by the cycle we can repeat the above
procedure to extend this into a larger cycle. Thus we can obtain a
cycle $C_i'$ with the required properties.
The bound on the number of steps follows as in Claim~1.

\medskip

\noindent \textbf{Claim 3.} \emph{After at most $g(n)$ further steps, we may assume
that $F$ is still a $2$-factor and that for each $1 \leq i \leq k'$
there is a cycle $C_i''$ of $F$ which contains all vertices of $B_i'
\cup D_i'$.}

\smallskip

\noindent

Let $C_i'$ be the cycle obtained in the previous claim and suppose
there is a vertex $b \in B'_i$ not covered by $C_i'$. By (a) and (b) it
follows that $b$ has at least $\gamma \beta m/8$ $H$-neighbours in $V(C_i')\cap D'_i$.
Let $d$ be such an $H$-neighbour of~$b$. Repeating the
procedure in the proof of the previous claim, we can enlarge $C'_i$ into a
cycle containing $b$. Similarly we can extend the cycle to include
any $d \in D_i'$, thus proving the claim.

\medskip

\noindent \textbf{Claim 4.} \emph{After at most $g(n)$ further steps, we may assume
that $F$ is still a $2$-factor and that for each $1 \leq i \leq k'$
there is a cycle $C_i'''$ of $F$ which contains all vertices of $B_i
\cup D_i$ and that there are no other cycles in $F$.}

\smallskip

\noindent

Let $C_i''$ be the cycle obtained in the previous claim and let $x$
be a vertex in $B_i$ not covered by $C_i''$. (The case when some vertex in~$D_i$
is not covered by~$C''_i$ is similar.) Let $C$ be the cycle of
$F$ containing $x$ and let $y$ and $z$ be the neighbours of $x$ on $C$.

\medskip

\noindent \textbf{Case 1.} \emph{$y \in A_j$ for some $j$.}

\smallskip

\noindent

It follows from (d) that there are at least $(1 - \eps)m$ vertices of $A_j$ which
have an $H$-neighbour in $B_i$. Also, $y$
has an $H$-neighbour $w$ satisfying the following:
\begin{itemize}
\item[(i)]  both $F$-neighbours of $w$ belong to $A_j \setminus \{y\}$;
\item[(ii)]  both $F$-neighbours of $w$ have an $H$-neighbour in $B_i'$.
\end{itemize}
To see that we can choose such a $w$, suppose first that $A_j =
B_{j'}$ for some $j'$. Then $y$ has a set $N_y$ of at least $\gamma \beta m/8$
$H$-neighbours%
    \COMMENT{don't get $\gamma \beta m/8$ since the cluster size isn't quite $m$}
in $D_{j'}$ by (b). By (a), at most $3\gamma^3 \beta m$ vertices of $N_y$ do not
have both $F$-neighbours in $B_{j'}$. Note that $y$ cannot be one of these
$F$-neighbours in~$B_{j'}$ since $H$ and $F$ are edge-disjoint.
So $N_y$ contains  a set $N_y^*$ of size $\gamma \beta m/9$
so that all vertices in $N_y^*$ satisfy (i). By (d) at most $2 \eps m$ of these do not
satisfy (ii).%
    \COMMENT{need $2\eps m$ instead of $\eps m$ here since one vertex in
$B_{j'}$ having no $H$-neighbour in~$B'_i$ can ruin 2 vertices in~$D_y$}
The argument for the case when $A_j = D_{j'}$ for some
$j'$ is identical.

The next step depends on whether $w$ belongs to
$C_i'', C$ or some other cycle $C'$ of $F$. In all cases we will
find a path $P$ from $x \in B_i$ to a vertex $y'' \in D_i$
containing all vertices of $C_i''\cup C$. We can then proceed as before to
find a cycle containing all the vertices of this path.

\medskip

\noindent \textbf{Case 1a.} \emph{$w \in C_i''$.}

\smallskip

\noindent

Let $y'$ be any one of the $F$-neighbours of $w$. Let $x'$ be any
$H$-neighbour of $y'$ with $x' \in B_i'$ guaranteed by (ii) (so $x'$ lies on $C''_i$) and let $y'' \in D_i$ be the
$F$-neighbour of $x'$ in the segment of $C_i''$ between $x'$ and
$y'$ not containing $w$. Then we can replace the cycles $C_i''$ and
$C$ by the path $xzCywC_i''x'y'C_i''y''$ by removing the edges
$yx,wy'$ and $x'y''$ and adding the edges $yw$ and~$y'x'$.

\medskip

\noindent \textbf{Case 1b.} \emph{$w \in C$.}

\smallskip

\noindent

Let $y'$ be the $F$-neighbour of $w$ in the segment of $C$ between
$y$ and $w$ not containing $x$. Let $x'$ be any $H$-neighbour of
$y'$ with $x' \in B_i'$ and let $y''$ be any $F$-neighbour of $x'$.
Note that $x'$ and $y''$ both lie on $C''_i$ as $x'\in B'_i$.
Then we can replace the cycles $C_i''$ and $C$ by the path
$xzCwyCy'x'C_i''y''$ by removing the edges $yx,wy'$ and $x'y''$ and
adding the edges $yw$ and $y'x'$.

\medskip

\noindent \textbf{Case 1c.} \emph{$w \in C'$ for some $C' \neq
C,C_i''$.}

\smallskip

\noindent

Let $y'$ be any one of the $F$-neighbours of $w$. Let $x'$ be any
$H$-neighbour of $y'$ with $x' \in B_i'$ and let $y''$ be any
$F$-neighbour of $x'$. So $x'$ and $y''$ both lie on $C''_i$.
We can replace the cycles $C_i'',C$ and
$C'$ by the path $xzCywC'y'x'C_i''y''$ by removing the edges $yx,wy'$
and $x'y''$ and adding the edges $yw$ and~$y'x'$.

\medskip

\noindent

\medskip

\noindent \textbf{Case 2.} \emph{$y \in V(X) \setminus (A_1 \cup \cdots \cup A_k)$. }

\smallskip

\noindent

Let $A$ be a cluster so that $y$ has a set $N_y$ of at least $\alpha^2 m$ $H$-neighbours in $A'$
(if $A=B_j$ for some $j$, then $A'$ denotes the set $B_j'$ and similarly if $A=D_j$).
Such an $A$ exists since otherwise $y$ would have at most  $\gamma n + \alpha^2 n$ neighbours
in $H$ by~(\ref{bidi}) and the second part of~(e).
But this would contradict the lower bound of at least $\alpha n/7$ $H$-neighbours given by (e).
Without loss of generality, we may assume that $A=B_j$ for some $j$, the argument for
$A=D_j$ is identical.
Then by (c) there is an index $s\neq j$ so that either ($c_1$) or ($c_2$) holds:
\begin{itemize}
\item[($c_1$)] the pairs $(B_{s},D_{j})$ and $(D_{s},B_i)$ are weakly $(\eps,\eps^3/2k)$-regular in $H$;
\item[($c_2$)] the pairs $(D_{s},D_{j})$ and $(B_{s},B_i)$ are weakly $(\eps,\eps^3/2k)$-regular in $H$.
\end{itemize}
We may assume that ($c_1$) holds, the argument for ($c_2$) is identical.
For convenience, we fix an orientation of each cycle of $F$.
Given a vertex $v$ on a cycle of $F$, this will enable us to refer to
the successor $v^+$ of $v$ and predecessor $v^-$ of $v$.
In particular, let $N_y^+$ be the successors of the vertices in $N_y$ on $C_j''$ and let $N_y^-$ be the predecessors.
So $N_y^+, N_y^- \subseteq D_j$ and $|N_y^-|,|N_y^+| \ge \alpha^2 m$.

Also, let $B_s''$ be the subset of vertices $v$ of $B_s'$ so that both $F$-neighbours
$v^-$ and $v^+$ of $v$ have at least five $H$-neighbours in $B_i'$. Since
$v^-,v^+ \in D_s$, ($c_1$) and~(\ref{bidi}) together imply that $|B_s''| \ge m/2$.
Two application of ($c_1$) to $(B_{s},D_{j})$ now imply that there is a vertex
$w \in N_y$ so that both $w^+$ and $w^-$ have at least one
$H$-neighbour in $B_s''$ (more precisely, apply ($c_1$) to the subpairs $(B_s'',N_y^+)$ and $(B_s'',N_y^-)$).

Suppose first that $C \neq C_j''$. Then let $w_+:=w^+$ and
we can obtain a path $P_1$ with the same vertex set as $C\cup C_j''$
by defining $P_1:=xzCywC_j''w_+$.
If $C=C''_j$, then let $w_+$ be the $C$-neighbour of $w$ on the segment of $C$ between $w$ and $y$
which does not contain $x$ and let $P_1:=xzCwyCw_+$.

Let $v$ be the $H$-neighbour of $w_+$ in $B_s''$ (guaranteed by the definition of $w$).
Note that $v\neq y$ and $v\neq w$ (as $s\neq j$).
Suppose first that $C_s'' \neq C_j'',C$. Then we let $v_+:=v^+$ and define the path $P_2:=xP_1w_+vC_s''v_+$.
If  $C_s'' = C_j''$ or $C_s''=C$, then all vertices of $C_s''$ already lie on $P_1$ and we
let $v_+$ be the $P_1$-neighbour of $v$ on the segment of $P_1$ towards
$w_+$ and let $P_2:=xP_1vw_+P_1v_+$.

Now let $u$ be an $H$-neighbour of $v_+$ in $B_i'$. (To see the existence of $u$, note that $v_+$
is one of th the $F$-neighbours of $v$ in the definition of $B_s''$ since $v\neq w,y$.)
If $C_i'' \neq C_j''$ and $C_i'' \neq C_s''$, then let $u_+:=u^+$ and define the path $P_3:=xP_2 v_+u C_i''u_+$.
If $C_i'' = C_j''$ or $C_i'' = C_s''$, then all the vertices of $C_i''$ already lie on~$P_2$. Since
at most 2 edges of $C''_i$ do not lie on~$P_2$ and since $v_+$ has at least five $H$-neighbours
in $B'_i$ by definition of $B''_s$, we can choose $u$ in such a way that its
$P_2$-neighbours both lie in~$D_i$. We now let $u_+ \in D_i$ be the $P_2$-neighbour of $u$ on the segment
of $P_2$ towards $v_+$ and let $P_3:=xP_2uv_+P_2u_+$.
Note that $P_3$ has endpoints $x\in B_i$ and $u_+ \in D_i$ and contains all vertices of $C''_i\cup C$,
as desired. (We count the whole construction of $P_3$ as one step of the algorithm.)
This completes Case~2.

\medskip

Repeating this procedure, for each $i$ we can find a cycle $C'''_i$ which contains all vertices
of $B_i\cup D_i$. Property~(vi) of Theorem~\ref{Transforming 2-factors into Hamilton
cycles} and the second part of~(e) together imply that no cycle in the $2$-factor $F$ thus obtained
can consist entirely of vertices in $V(X)\setminus (A_1\cup\dots\cup A_k)$ and so
the $C'''_i$ are the only cycles in~$F$.

\medskip

\noindent
\textbf{Claim 5.} \emph{By relabeling if necessary, we may
assume that for every $1 \leq i \leq k'$, the pair $(B_i,D_{i+1})$ is
weakly $(\eps,\eps^3/2k)$-regular in $H$ (where $D_{k' + 1}:=D_1$).}

\smallskip

\noindent

For each $1 \leq i \leq k'$ we relabel $B_i$ and $D_i$ into $D_i$
and $B_i$ respectively with probability $1/2$ independently.
Property (c) together with Theorem~\ref{Chernoff} imply that with
high probability for each $1 \leq i \leq k'$ there are at least $(1
+ \alpha/2)k'/2$ indices $j$ and least $(1+ \alpha/2)k'/2$ indices $j'$
with $1 \leq j,j' \leq k'$ and $j,j' \neq i$ such
that each $(B_i,D_j)$ and each $(B_{j'},D_i)$ are weakly $(\eps,\eps^3/2k)$-regular in~$H$.
Fix such a relabeling. Define a directed graph $J$ on vertex set $[k']$
by joining $i$ to $j$ by a directed edge from $i$ to $j$ if and only
if the pair $(B_i,D_j)$ is weakly $(\eps,\eps^3/2k)$-regular in~$H$. Then
$J$ has minimum out-degree and minimum
in-degree at least $(1 + \alpha/2)k'/2$ and so by
Theorem~\ref{ghouila} it contains a directed Hamilton cycle. Claim~5 now follows
by reordering the indices of the $B_i$'s and $D_i$'s so that they
comply with the ordering in the Hamilton cycle.

\medskip

\noindent \textbf{Claim 6.} \emph{For each $1 \leq j \leq k'$, after
at most $j$ steps, we may assume that $F$ is a union
of cycles together with a path $P_j$ such that~$P_j$ has endpoints $x \in
D_1$ and $y_j \in B_j$, where $y_j$ has an $H$-neighbour in
$D_{j+1}'$, and $P_j$ covers all vertices of $(B_1\cup D_1) \cup \cdots \cup (B_j \cup D_j)$.
Furthermore, for every $j+1 \leq i \leq k'$, either
$P_j$ covers all vertices of $C_i'''$ or $V(P_j)\cap V(C_i''')=\emptyset$.}

\smallskip

\noindent

To prove this claim we proceed by induction on $j$. For the case
$j=1$ observe that by Claim~5 there are at least $(1-\eps)m$ vertices of $B_1$
which have at least one $H$-neighbour in $D_2'$. Of those, there is
at least one vertex $y_1$ which belongs to $B_1'$. Let $x$ be any
$F$-neighbour of $y_1$ (so $x\in D_1$) and remove the edge $xy_1$ from $C_1'''$ to
obtain the path $P_1$. Having obtained the path $P_j$, let $x_{j+1}$
be an $H$-neighbour of $y_j$ in $D_{j+1}'$ (we count the construction of each $P_j$ as
one step of the algorithm).

\medskip

\noindent \textbf{Case 1.} \emph{$P_j$ covers all vertices of
$C_{j+1}'''$.}

\smallskip

\noindent

In this case, let $z_{j+1}$ be the neighbour of $x_{j+1}$ on~$P_j$ in the
segment of $P_j$ between $x_{j+1}$ and $y_j$ and let $Q_{j+1}$ be
the path obtained from $P_j$ by adding the edge $y_jx_{j+1}$ and
removing the edge $x_{j+1}z_{j+1}$. Observe that the endpoints of
the path are $x \in D_1$ and $z_{j+1} \in B_{j+1}$ (but $z_{j+1}$ need not have
an $H$-neighbour in $D'_{j+2}$). By~(a) and~(b) $z_{j+1}$ has at least $\gamma
\beta m/8$ $H$-neighbours $w_{j+1}$ in $D'_{j+1}$. For each such
$H$-neighbour $w_{j+1}$, let $w'_{j+1}$ be
the unique neighbour of $w_{j+1}$ on $Q_{j+1}$ in the segment of $Q_{j+1}$
between $w_{j+1}$ and $z_{j+1}$. So $w'_{j+1}\in B_{j+1}$. Since
by the previous claim at most $\eps m$ vertices of $B_{j+1}$ do not
have an $H$-neighbour in $D_{j+2}'$, we can choose a $w_{j+1}$ so
that $w'_{j+1}$ has an $H$-neighbour in
$D_{j+2}'$. We can then take $y_{j+1}:=w'_{j+1}$ and obtain
$P_{j+1}$ from $Q_{j+1}$ by adding the edge $z_{j+1}w_{j+1}$ and
removing the edge $w_{j+1}w'_{j+1}$.

\medskip

\noindent \textbf{Case 2.} $V(P_j)\cap V(C_{j+1}''')=\emptyset$.

\smallskip

\noindent

In this case, we let $z_{j+1}$ be any $F$-neighbour of $x_{j+1}$ and
let $Q_{j+1}$ be the path obtained from $P_j$ and $C_{j+1}'''$ by
adding the edge $y_jx_{j+1}$ and removing the edge $x_{j+1}z_{j+1}$.
Observe that the endpoints of the path are $x \in D_1$ and $z_{j+1}
\in B_{j+1}$ and so this case can be completed as the previous
case.

\medskip

By the case $j=k'$ of the previous claim we may assume that we
now have a path $P:=P_{k'}$ which covers all vertices of $A_1 \cup
\cdots \cup A_k$ and has endpoints $x \in D_1$ and $y:=y_{k'} \in
B_{k'}$ where $y$ has an $H$-neighbour in $D_1'$. Moreover, $P$ contains all
vertices of each $C'''_i$ and so by Claim~4 it must be a Hamilton path.
Now let $z$ be any
$H$-neighbour of $y$ with $z \in D_1'$ and let $w$ be the neighbour
of $z$ in the segment of $P$ between $z$ and $y$. Let $Q$ be the
path obtained from $P$ by removing the edge $wz$ and adding the edge
$yz$. So $Q$ is a path on the same vertex set as $P$ with endpoints $x \in D_1$ and
$w \in B_1$ (we count the construction of $Q$ as another step of the algorithm).
But then we can apply Lemma~\ref{Rotation-Extension} to
transform $Q$ into a Hamilton cycle in one more step, thus completing the proof of
Theorem~\ref{Transforming 2-factors into Hamilton cycles}.
\end{proof}

\section{Acknowledgment}
We would like to thank Andrew Treglown for helpful discussions.

\medskip

{\footnotesize \obeylines \parindent=0pt

\begin{tabular}{lll}

Demetres Christofides               &\ &  Daniela K\"{u}hn \& Deryk Osthus \\
School of Mathematical Sciences     &\ &  School of Mathematics \\
Queen Mary, University of London    &\ &  University of Birmingham \\
Mile End Road                       &\ &  Edgbaston \\
London                              &\ &  Birmingham \\
E1 4NS                              &\ &  B15 2TT \\
UK                                  &\ &  UK \\

\end{tabular}
}

{\footnotesize \parindent=0pt

\it{E-mail addresses}:
\tt{d.christofides@qmul.ac.uk}, \tt{\{kuehn,osthus\}@maths.bham.ac.uk}}

\end{document}